\newcommand{\ol}{\overline}
\newtheorem{theorem}{Theorem}[section]
\newtheorem{corollary}[theorem]{Corollary}
\newtheorem{lemma}[theorem]{Lemma}
\newtheorem{proposition}[theorem]{Proposition}
\newtheorem{question}[theorem]{Question}
\newtheorem{remark}[theorem]{Remark}
\theoremstyle{definition}
\newtheorem{example}[theorem]{Example}
\newcommand{\N}{\mathbb{N}}
\newcommand{\cM}{\mathcal{M}}
\newcommand{\cB}{\mathcal{B}}
\newcommand{\supp}{\mathrm{supp}}
\newcommand{\pref}{\mathrm{pref}}
\newcommand{\suff}{\mathrm{suff}}
\newcommand{\flad}{\mathrm{FLAd}}
\newcommand{\frad}{\mathrm{FRAd}}
\newcommand{\fad}{\mathrm{FAd}}
\newcommand{\fm}{\mathrm{F}}
\newcommand{\fu}{\mathrm{F}_u}
\newcommand{\fbu}{\mathrm{F}_{bu}}
\newcommand{\ut}{\mathrm{UT}}
\newcommand{\fmp}{(\mathrm{F}(\Sigma))^+}
\newcommand{\osigma}{\overline{\Sigma}}
\newcommand{\maxp}{\mathrm{mp}}
\newcommand{\setup}{\GraphInit[vstyle=Empty]
            \SetVertexSimple[MinSize = 1pt]
            \SetUpEdge[lw = 0.5pt]
            \tikzset{EdgeStyle/.style={->-}}
            \tikzset{VertexStyle/.append style = {minimum size = 3pt, inner sep = 0pt}}
            \SetVertexNoLabel
            \SetGraphUnit{2}}
\newcommand{\floor}[1]{\left \lfloor #1 \right \rfloor }
\tikzset{->-/.style={
 decoration={
   markings,
   mark=at position #1 with {\arrow{>}}
   },
   postaction={decorate}
  },
  ->-/.default=0.5 
}
\def\PlusCross{{%
    \setbox0\hbox{$+$}%
    \rlap{\hbox to \wd0{\hss$\times$\hss}}\box0
}}
\title{Growth and identities of monogenic free adequate monoids}
\author{Thomas Aird}
\address{T. Aird, Department of Mathematics, University of Manchester, 
Manchester \linebreak M13 9PL and Heilbronn Institute for Mathematical Research, Bristol, UK.}
\email{thomas.aird@manchester.ac.uk}
\author{Daniel Heath}
\address{D. Heath, Department of Mathematics, University of Manchester, 
Manchester \linebreak M13 9PL}
\email{daniel.heath-2@manchester.ac.uk}
\keywords{adequate, Ehresmann, growth, identities, monoid. \\
\indent \emph{MSC(2020)}: 20M05 (Primary), 08B20, 20M07, 05C05}
\date{September 2025}
\begin{document}
\begin{abstract}
    Motivated by recent advances in inverse semigroup theory, we investigate the growth of and identities satisfied by free left and free two-sided adequate monoids.

    We explicitly compute the growth of the monogenic free left adequate monoid with the usual unary monoid generating set and show it has intermediate growth owing to a connection with integer partitions. In the two-sided case, we establish a lower bound on the (idempotent) growth rate of the monogenic free adequate monoid, showing that it grows exponentially.

    We completely classify the enriched identities satisfied by the monogenic free left adequate monoid and deduce that it satisfies the same monoid identities as the sylvester monoid. In contrast, we show that the monogenic free two-sided adequate monoid satisfies no non-trivial monoid identities.
\end{abstract}
\maketitle
\section{Introduction}
First introduced by Fountain \cite{FOU, FOU2}, the class of (\textit{left}) \textit{adequate semigroups} generalises the well-studied class of \textit{inverse semigroups}.
They form part of the \textit{York school} of semigroup theory, which (broadly) considers classes of semigroups whose structure is strongly determined by their idempotents. Indeed, the nomenclature is due to adequate semigroups containing a sufficient quantity of idempotents \cite{FOU2}.

It is common \cite{BRA2,HEA,KAM,KAM2} to consider left and right adequate monoids within the class of $(2,1,0)$-algebras, with an additional unary operation relating elements to idempotents with whom they share certain cancellativity properties (see Section \ref{sec:prelims:adequate}). Within this class, both left and right adequate monoids form \textit{quasivarieties} and as such free objects exist for any rank.

The free inverse monoid was combinatorially described by Munn \cite{MUN}, building on work of Scheiblich \cite{SCH} via birooted, edge-labelled directed graphs with a multiplication given by a `gluing and folding' procedure. This has prompted far-reaching study of inverse monoids in a wide variety of natural directions (see, for example, \cite{JON,KAM3,LAW,STE}).

Such descriptions for free left, free right, and free two-sided adequate monoids have also been given by Kambites \cite{KAM,KAM2} with \textit{retractions} providing the generalisation of folding. This modification of folding adds much complexity to the study of adequate monoids, even in the free cases.

In the \textit{monogenic} cases, however, retractions are (somewhat) more tractable. Here, we explore $\flad(a)$, $\frad(a)$, and $\fad(a)$, and study various properties motivated by recent advances in inverse semigroup theory, particularly results akin to those for free inverse monoids.


It is well known (and easily seen) that the \textit{spherical growth} of $\mathrm{FIM}(a)$ is quadratic. Such growth for higher rank free inverse monoids is seen to be exponential, and has been studied in detail in \cite{KAM3}. We examine here the growth rate of $\flad(a)$ and $\fad(a)$, and show that $\flad(a)$ has intermediate growth, whilst in contrast, even the semilattice of idempotents of $\fad(a)$ grows exponentially. In particular, we deduce that $\fad(X)$ grows exponentially for any $|X| \geq 1$.

We then turn our attention to the identities satisfied by $\flad(a)$ and $\frad(a)$ in both enriched and unenriched signatures. It is well known that the monogenic inverse monoid satisfies the same monoid identities as many other interesting semigroups including; the \textit{bicyclic monoid} \cite{SInverseBicyclic}, the monoid of $2 \times 2$ \textit{upper triangular tropical matrices} \cite{DJKUpperTriTrop}, and the \textit{plactic monoid} of rank $2$ \cite{CKKMOPlactic,IPlactic}. 
We show here similar results for $\flad(a)$ and $\frad(a)$  corresponding to the \textit{sylvester} and the \textit{\#-sylvester} monoids, respectively. 

We proceed as follows. In Section~\ref{sec:prelims}, we gather the necessary preliminaries for our forthcoming results. In Section~\ref{sec:PropsOfMonAde}, we collect results which hold specifically in the monogenic case -- we show a certain identity of Batbedat \cite{BAT} and Fountain \cite{FOU5} holds in $\flad(a)$. In Section~\ref{sec:growth}, we explore the growth rates of $\flad(a)$ and $\fad(a)$. Finally, in Section~\ref{sec:identities}, we use the identities of Section \ref{sec:PropsOfMonAde} to determine both the enriched and monoid identities satisfied by $\flad(a)$, $\frad(a)$, and $\fad(a)$. We also comment on higher rank cases.

\textbf{Acknowledgements.} This work was supported by the Heilbronn Institute for Mathematical Research. The authors thank James Bryden for their help in verifying the results in Table \ref{tab:sizeoffad}.

\section{Preliminaries}\label{sec:prelims}

Throughout, we assume the reader is familiar with the fundamentals of semigroup theory and universal algebra. For comprehensive introductions, we direct the reader to \cite{HOW} for concepts in semigroup theory, and to \cite{BUR} for concepts in universal algebra.

\subsection{Adequate monoids}\label{sec:prelims:adequate}

Left [resp. right] adequate monoids were defined traditionally via a relation $\mathcal{R}^*$ [resp. $\mathcal{L}^*$], see for example \cite{FOU,FOU2}. Here, we take an alternative approach via \textit{identities}, as seen in \cite{BRA,BRA2}. We will discuss identities in more detail in Section \ref{sec:identities}.

A monoid $M$ equipped with an additional unary operation denoted $m \mapsto m^+$, is called \textit{left adequate} if it satisfies the quasi-identities 
\[x^+x\approx x \quad\quad (x^+y^+)^+ \approx x^+y^+ \approx y^+x^+ \quad\quad (xy)^+ \approx (xy^+)^+\]\[x^2 \approx x \rightarrow x \approx x^+ \quad\quad xz \approx yz \rightarrow xz^+ \approx yz^+.\]

A monoid $M$ equipped with an additional unary operation denoted $m \mapsto m^*$, is called \textit{right adequate} if it satisfies the quasi-identities 
\[xx^*\approx x \quad\quad (x^*y^*)^* \approx x^*y^* \approx y^*x^* \quad\quad (xy)^* \approx (x^*y)^*\]\[x^2 \approx x \rightarrow x \approx x^* \quad\quad zx \approx zy \rightarrow z^*x \approx z^*y.\]

A monoid $M$ equipped with two additional unary operations $+$ and $\ast$ is called \textit{two-sided adequate}, or simply \textit{adequate}, if it is left adequate with respect to $+$ and right adequate with respect to $\ast$.



Unless otherwise stated, we consider left [resp. right] adequate monoids as algebras of signature {$(2,1,0)$} with the associative multiplication and identity element supplemented with the unary operation $+$ [resp. $\ast$]. We similarly treat adequate monoids within the signature $(2,1,1,0)$.


\subsection{Free structures}

Given a class of algebras $\mathcal{C}$ within a fixed signature, we say an object $F$ is \textit{free} on a subset $X \subseteq F$ if for any $M \in \mathcal{C}$ and map $f:X \to M$, there exists a unique morphism $F \to M$ extending $f$. The set $X$ is then said to \textit{freely generate} $F$, and its cardinality is the \textit{rank} of $F$.

In \textit{varieties} and \textit{quasivarieties}, free objects exist in any rank. We will consider various such classes here. Given a set $X$, the \textit{free unary monoid} [resp. the \textit{free biunary monoid}] generated by $X$, denoted $\fu(X)$ [resp. $\fbu(X)$] is the free structure freely generated by $X$ in the class of $(2,1,0)$-algebras [resp. $(2,1,1,0)$-algebras] with defining quasi-identities only those of monoids, i.e.\[x(yz)\approx (xy)z \quad\quad x\varepsilon \approx x \approx \varepsilon x.\]Note that in both cases, the unary operation(s) are entirely free and we do not have $\varepsilon^+ = \varepsilon$ as one might expect. As we shall see, we do have this equality in $\flad(X)$ and $\fad(X)$.

In their respective signatures, the classes of left adequate, right adequate, and adequate monoids each form quasivarieties (see also \cite[Section 2.2]{HEA}) and as such free objects exist for any rank. For a set $X$, we denote by:
\begin{itemize}
    \item $\flad(X)$, the free left adequate monoid freely generated by $X$;
    \item $\frad(X)$, the free right adequate monoid freely generated by $X$;
    \item $\fad(X)$, the free adequate monoid freely generated by $X$.
\end{itemize}These free structures coincide with the \textit{free left Ehresmann}, \textit{free right Ehresmann} and \textit{free Ehresmann} monoids respectively \cite{GOU2,KAM2}.

We state the following, which is immediately apparent from the definition of $\flad(X)$ and $\frad(X)$.

\begin{proposition}\label{prop:ladradduality}
    For any set $X$, $\flad(X)$ and $\frad(X)$ are anti-isomorphic in the $(2,1,0)$-signature.
\end{proposition}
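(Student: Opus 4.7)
The plan is to exploit the symmetry already visible between the defining quasi-identities of left and right adequate monoids, using opposite monoids to convert the universal property of $\flad(X)$ into that of $\frad(X)$.

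First I would show that if $(M,\cdot,+,1)$ is a left adequate monoid, then the opposite monoid $M^{op}$ (same underlying set, reversed multiplication) equipped with the same unary operation, now viewed as $\ast$, is a right adequate monoid. This is a direct check: each of the five defining quasi-identities of right adequate monoids is obtained from the corresponding left adequate quasi-identity by reversing every product and renaming $+$ to $\ast$. For instance, $x^+ x \approx x$ becomes $x \cdot x^\ast \approx x$; $(xy)^+ \approx (xy^+)^+$ becomes $(yx)^\ast \approx (y^\ast x)^\ast$, i.e.\ (after a variable swap) $(xy)^\ast \approx (x^\ast y)^\ast$; and analogously for the idempotent commutativity, the idempotent implication, and the cancellation quasi-identity. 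The same construction of course sends right adequate monoids to left adequate monoids, so the assignment $M \mapsto M^{op}$ is an involution between the two quasivarieties.

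Now consider $\flad(X)^{op}$, which by the above is a right adequate monoid containing $X$. By the universal property of $\frad(X)$, the inclusion $X \hookrightarrow \flad(X)^{op}$ extends uniquely to a $(2,1,0)$-morphism $\varphi : \frad(X) \to \flad(X)^{op}$. Symmetrically, we get a unique $(2,1,0)$-morphism $\psi : \flad(X) \to \frad(X)^{op}$ extending the inclusion. Viewing $\varphi$ as a map of underlying sets, it is an anti-morphism from $\frad(X)$ to $\flad(X)$ which commutes with the unary operation (sending $\ast$ to $+$); similarly for $\psi$.

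Finally, composing $\varphi$ and $\psi$ (in either order) yields an endomorphism of $\frad(X)$ or $\flad(X)$ that fixes $X$ pointwise, and so by the uniqueness clause in the universal property must be the identity. Hence $\varphi$ and $\psi$ are mutually inverse bijections, giving the desired anti-isomorphism. The only nontrivial step is the initial verification that the quasi-identities match up under reversal; this is genuinely routine, but is the place where one might slip up, so I would list the five pairs of axioms explicitly rather than just appeal to ``obvious symmetry''.
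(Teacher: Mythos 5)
Your proposal is correct: the paper offers no written proof, stating only that the result is ``immediately apparent from the definition,'' and your opposite-monoid argument (checking that reversal swaps the two lists of quasi-identities and then playing the two universal properties off against each other) is precisely the standard formalisation of that remark. Nothing in your argument diverges from the paper's intended reasoning, and the axiom-by-axiom verification you flag as the only delicate point does indeed go through.
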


These structures were given a geometric description in \cite{KAM,KAM2} which we recall in Section \ref{sec:prelims:trees}.

\subsection{Trees}\label{sec:prelims:trees}

Given a (non-empty) set $X$, a \textit{$X$-tree} $\Gamma$ is a directed graph, with edges labelled by elements of $X$, whose underlying, undirected graph is a tree, with two distinguished vertices called \textit{start} and \textit{end} such that there is a (necessarily unique) directed path from the start vertex to the end vertex. We call this path the \textit{trunk} of the tree, with its vertices and edges called \textit{trunk vertices} and \textit{trunk edges} respectively. We draw $X$-graphs via the usual diagrams (see Figure \ref{fig:tuples} for example), with the symbol $+$ decorating the start vertex and $\times$ decorating the end vertex.

We write $|\Gamma|$ for the number of edges of $\Gamma$. We denote the $X$-tree with no edges (and coinciding start and end vertex) by $\varepsilon$.

For $\Gamma$ with $k$ trunk edges, we write the $k+1$ trunk vertices of $\Gamma$ as a tuple $(v_0,\dots,v_{k})$, where $v_0$ is the start vertex, $v_{k}$ is the end vertex, and there is a trunk edge from $v_i$ to $v_{i+1}$ for all $0 \leq i < k$. We say there exists a \textit{branch} at a trunk vertex $v_i$ if there exists some non-trunk edge of $\Gamma$ with initial vertex $v_i$.

We will often be interested in the case where $X$ is a singleton $\{a\}$ -- in this case, we write \textit{$a$-tree} in place of $X$-tree.

We call an $X$-tree $\Gamma$ a \textit{left $X$-tree} if there is a directed path from the start vertex to every vertex, and dually a \textit{right $X$-tree} if there is a directed path from every vertex to the end vertex. 

A \textit{morphism} of $X$-graphs is a directed graph morphism which respects labelling and sends the start vertex to the start vertex and the end vertex to the end vertex. A \textit{retraction} on an $X$-tree $\Gamma$ is a directed graph endomorphism $\Gamma \to \Gamma$ which is idempotent. Its image is called a \textit{retract}. Following the conventions in \cite{HEA}, we call a tree \textit{retract-free} if it admit no non-trivial retractions -- it follows from general facts on relational structures (see e.g. \cite[Proposition 3.5]{KAM} or \cite[Proposition 1.4.7]{HEL}) that every $X$-tree $\Gamma$ admits a unique (up to directed graph isomorphism) retract-free retract, which we denote by $\overline{\Gamma}$. We refer to the process $\Gamma \mapsto \overline{\Gamma}$ by \textit{retracting}.

\begin{example}
    Figure \ref{fig:tuples} shows three left $a$-trees. The leftmost and rightmost trees are retract-free. The central tree admits a retraction to the leftmost tree. The leftmost tree has one branch. The central and rightmost trees each have two branches.

    \begin{figure}[h]
        \centering
        \begin{tikzpicture}
            \GraphInit[vstyle=Empty]
            \SetVertexSimple[MinSize = 1pt]
            \SetUpEdge[lw = 0.5pt]
            \tikzset{EdgeStyle/.style={->-}}
            \tikzset{VertexStyle/.append style = {minimum size = 3pt, inner sep = 0pt}}
            \SetVertexNoLabel
            \SetGraphUnit{2}

            \node (A) at ( 0,0) {\large$+$};
            \Vertex[x=0,y=1]{C1}
            \Vertex[x=1,y=1]{L1}
            \Vertex[x=2,y=1]{L2}
            \node (R1) at ( 0,2) {\large$\times$};
                      
            \Edge(A)(C1)\draw (A) -- (C1) node [midway, left=2pt] {$a$};
            \Edge(C1)(R1)\draw (C1) -- (R1) node [midway, left=2pt] {$a$};
            \Edge(C1)(L1)\draw (C1) -- (L1) node [midway, above=2pt] {$a$};
            \Edge(L1)(L2)\draw (L1) -- (L2) node [midway, above=2pt] {$a$};
            
        \end{tikzpicture}
        \hspace{0.05\textwidth}
        \begin{tikzpicture}
            \GraphInit[vstyle=Empty]
            \SetVertexSimple[MinSize = 1pt]
            \SetUpEdge[lw = 0.5pt]
            \tikzset{EdgeStyle/.style={->-}}
            \tikzset{VertexStyle/.append style = {minimum size = 3pt, inner sep = 0pt}}
            \SetVertexNoLabel
            \SetGraphUnit{2}
          
            \node (A) at ( 0,0) {\large$+$};
            \Vertex[x=0,y=1]{C1}
            \Vertex[x=1,y=1]{L1}
            \Vertex[x=2,y=2]{L2}
            \node (R1) at ( 0,2) {\large$\times$};
            \Vertex[x=1,y=0]{E1}
            \Vertex[x=2,y=1]{E2}

            \Edge(A)(C1)\draw (A) -- (C1) node [midway, left=2pt] {$a$};
            \Edge(C1)(R1)\draw (C1) -- (R1) node [midway, left=2pt] {$a$};
            \Edge(C1)(L1)\draw (C1) -- (L1) node [midway, above=2pt] {$a$};
            \Edge(L1)(L2)\draw (L1) -- (L2) node [midway, left=2pt] {$a$};
            \Edge(A)(E1)\draw (A) -- (E1) node [midway, above=2pt] {$a$};
            \Edge(L1)(E2)\draw (L1) -- (E2) node [midway, below=2pt] {$a$};
            
        \end{tikzpicture}
        \hspace{0.05\textwidth}
        \begin{tikzpicture}
            \GraphInit[vstyle=Empty]
            \SetVertexSimple[MinSize = 1pt]
            \SetUpEdge[lw = 0.5pt]
            \tikzset{EdgeStyle/.style={->-}}
            \tikzset{VertexStyle/.append style = {minimum size = 3pt, inner sep = 0pt}}
            \SetVertexNoLabel
            \SetGraphUnit{2}

            \node (_) at ( 0,0) {}; 
            
            \node (A) at ( 0,0.5) {\large$+$};
            \Vertex[x=2,y=1.5]{R2}
            \Vertex[x=1,y=0.5]{L1}
            \Vertex[x=2,y=0.5]{L2}
            \Vertex[x=3,y=0.5]{L3}
            \Vertex[x=4,y=0.5]{L4}
            \node (C1) at ( 0,1.5) {\large$\times$};
            \Vertex[x=1,y=1.5]{R1}
                      
            \Edge(A)(C1)\draw (A) -- (C1) node [midway, left=2pt] {$a$};
            \Edge(C1)(R1)\draw (C1) -- (R1) node [midway, above=2pt] {$a$};
            \Edge(R1)(R2)\draw (R1) -- (R2) node [midway, above=2pt] {$a$};
            \Edge(A)(L1)\draw (A) -- (L1) node [midway, above=2pt] {$a$};
            \Edge(L1)(L2)\draw (L1) -- (L2) node [midway, above=2pt] {$a$};
            \Edge(L2)(L3)\draw (L2) -- (L3) node [midway, above=2pt] {$a$};
            \Edge(L3)(L4)\draw (L3) -- (L4) node [midway, above=2pt] {$a$};
            
        \end{tikzpicture}
        \caption{Some left $a$-trees.}
        \label{fig:tuples}
    \end{figure}
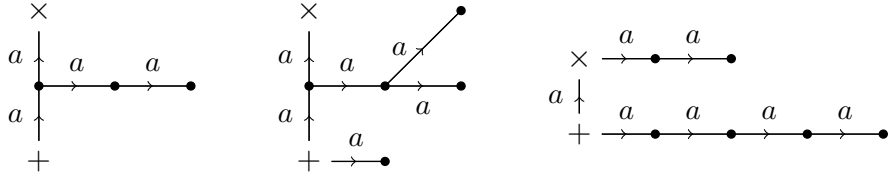
\end{example}

Throughout, we work with $X$-trees up to birooted graph isomorphism, and for notational simplicity, we will (where no possible ambiguity arises) identify $X$-trees with their isomorphism types. The following results summarise the main results of \cite{KAM,KAM2}. 

\begin{theorem}[Kambites \cite{KAM,KAM2}]\label{thm:ut}
    The set of all $X$-trees $\ut^1(X)$ forms an \linebreak $X$-generated biunary monoid with multiplication $S\times T$ given by gluing $T$ to $S$ start-to-end, and taking start vertex the start vertex of $S$ and end vertex the end vertex of $T$. The two unary operations are: $+$, with $T^+$ given by moving the end vertex to the start vertex, and $\ast$, with $T^\ast$ given by moving the start vertex of $T$ to the end vertex.
    
    The set of all left \emph{[}resp. right\emph{]} $X$-trees, denoted $\mathrm{LUT}^1(X)$ \emph{[}resp. $\mathrm{RUT}^1(X)$\emph{]}, forms an $X$-generated unary submonoid of $\ut^1(X)$ with the operation $+$ \emph{[}resp. $\ast$\emph{]}.
\end{theorem}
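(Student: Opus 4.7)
The plan is to verify each claim by direct inspection of the operations on representatives of $X$-graph isomorphism classes. I would first check that gluing two $X$-trees $S$ and $T$ end-to-start yields a valid $X$-tree: identifying a single vertex of two trees produces a tree, and the concatenation of the respective trunks supplies the required directed start-to-end path in the composite. The construction respects isomorphism, so descends to a binary operation on $\ut^1(X)$. Associativity is immediate from the fact that iterated gluing is independent of parenthesisation, and the single-vertex tree $\varepsilon$ is a two-sided identity.

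For the unary operations, I would verify that $T^+$ and $T^*$ produce valid $X$-trees: the underlying graph is unchanged, while collapsing start and end at one endpoint makes the empty path the required start-to-end path. These operations evidently respect isomorphism.

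The main substance lies in showing that $X$ generates $\ut^1(X)$, for which I would induct on $|T|$. If $T$ has trunk $(v_0, \ldots, v_k)$ with trunk edges labelled $x_1, \ldots, x_k \in X$, the non-trunk structure at each $v_i$ decomposes into finitely many branch-subtrees, each of size strictly less than $|T|$. Viewing each such branch as a standalone $X$-tree with both start and end taken to be $v_i$, it is of the form $B^+$ or $B^*$ for some smaller $X$-tree $B$ (depending on whether the branch issues from or terminates at $v_i$), and by induction $B$ is generated by $X$. The tree $T$ itself is then a product of the trunk generators $x_1, \ldots, x_k$ interlaced with these branch idempotents at the appropriate positions. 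The main obstacle here is the bookkeeping: when several branches meet at a single $v_i$, one must check that the gluing rule is agnostic to the order in which the corresponding idempotents are multiplied in.

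Finally, for the submonoid claims, I would note that if $S$ and $T$ are left $X$-trees then every vertex of $S \cdot T$ is reachable from the start of $S$ (via $S$ for vertices of $S$, and via the trunk from start of $S$ to end of $S$ = start of $T$ followed by a path in $T$ for vertices of $T$), and $T^+$ is a left tree whenever $T$ is, since reachability from the start is preserved. The right-tree case is dual. Generation of $\mathrm{LUT}^1(X)$ by $X$ under $\cdot$ and $+$ alone is a refinement of the induction above: in a left $X$-tree each branch-subtree rooted at a trunk vertex is itself reachable from that vertex, so only $+$-images of smaller left trees appear in the decomposition, and $*$ is never needed.
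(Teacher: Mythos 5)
The paper does not prove this theorem: it is stated as a citation summarizing the main results of Kambites' papers \cite{KAM,KAM2}, so there is no in-paper proof to compare your argument against. Treating your argument on its own merits, the closure and associativity checks and the submonoid verification for left/right trees are all sound and standard.

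However, there is a genuine gap in the generation argument. You claim that decomposing the non-trunk structure at each $v_i$ yields branch-subtrees ``each of size strictly less than $|T|$'', and then write each such branch (with start $=$ end $= v_i$) as $B^+$ or $B^*$ for a ``smaller'' tree $B$, applying the induction hypothesis to $B$. But $B$ has the \emph{same} number of edges as the branch it came from (passing to $B^+$ or $B^*$ only relocates a root), so the induction only advances if the branch itself has strictly fewer edges than $T$. This fails precisely when $T$ is an idempotent (zero trunk edges) with a single branch emanating from its start $=$ end vertex: in that case the one branch \emph{is} all of $T$, and $B$ has $|B| = |T|$ edges, so the induction on $|T|$ does not terminate. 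The fix is easy but should be made explicit: when $T$ is an idempotent, first write $T = (T')^+$ or $(T')^*$ where $T'$ is $T$ with one root slid along an incident edge so that $T'$ has trunk length $\geq 1$; then apply the trunk-and-branches decomposition directly to $T'$ (not the induction hypothesis), after which all branch idempotents of $T'$ do have strictly fewer edges than $|T'| = |T|$ and the induction hypothesis applies to them. You should also make precise the ``bookkeeping'' remark about commutativity of idempotents glued at a common trunk vertex, since the claimed factorisation of $T$ relies on it; this is an easy consequence of the gluing definition but is part of the proof, not an aside.
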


\begin{theorem}[Kambites \cite{KAM,KAM2}]\label{thm:flad}
    The free left \emph{[}resp. right\emph{]} adequate monoid $\flad(X)$ \emph{[}resp. $\frad(X)$\emph{]} generated by a set $X$ is the unary monoid quotient of $\mathrm{LUT}^1(X)$ with $+$ \emph{[}resp. $\mathrm{RUT}^1(X)$ with $\ast$\emph{]} induced by the retraction map $T \mapsto \overline{T}$.
    
    The free adequate monoid $\fad(X)$ generated by a set $X$ is the biunary monoid quotient of $\ut^1(X)$ induced by the retraction map $T \mapsto \overline{T}$.
    
    These structures are freely generated (in their respective signatures) by the set of single-edge trees with distinct start and end vertices labelled $x$ for each $x \in X$ (we identify this set of trees with $X$ itself).
\end{theorem}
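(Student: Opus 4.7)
The plan is to prove the left case in detail; the right case follows by Proposition~\ref{prop:ladradduality}, and the two-sided case combines both via the $\ast$ operation in essentially the same way. The approach has three parts: define $T \sim S$ by $\overline{T} = \overline{S}$ and show it is a unary monoid congruence on $\mathrm{LUT}^1(X)$; verify the quotient satisfies the left adequate quasi-identities; and establish the universal property, identifying $X$ with the set of single-edge trees.

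For the congruence, I would show that a retraction of $T$ and a retraction of $S$ assemble into a retraction of the glued tree $TS$ by acting componentwise on the two halves, and that $+$ simply relabels the end vertex as the start vertex, so commutes with every graph endomorphism and hence with retraction. By uniqueness of $\overline{\Gamma}$ up to isomorphism (mentioned before the theorem), these together yield $\overline{T_1 S_1} = \overline{T_2 S_2}$ and $\overline{T_1^+} = \overline{T_2^+}$ whenever $\overline{T_1} = \overline{T_2}$ and $\overline{S_1} = \overline{S_2}$.

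Each left adequate quasi-identity is then verified tree-by-tree after retracting. For instance, $T^+ T$ glues $T^+$ to $T$ at a common vertex and retracts back to $T$ by identifying the duplicated copy of the root subtree. The commutativity and idempotency axioms for $+$ reduce to the observation that the trees $T^+ S^+$, $S^+ T^+$, and $(T^+ S^+)^+$ all have coincident start and end vertices, with the same multiset of branch subtrees attached at that vertex. The axiom $(xy)^+ \approx (xy^+)^+$ and the conditional axioms $x^2 \approx x \Rightarrow x \approx x^+$ and $xz \approx yz \Rightarrow xz^+ \approx yz^+$ follow from similar structural inspection after retracting both sides.

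For the universal property, given a left adequate monoid $M$ and a map $f : X \to M$, I would extend $f$ to $\tilde f : \mathrm{LUT}^1(X) \to M$ by induction on tree size: decompose a left tree as a product of trunk edges interleaved with idempotent branch subtrees attached at each trunk vertex, and use the operation $+$ in $M$ to interpret each branch. Commutativity and idempotency of the $+$-image idempotents in $M$ make this well-defined regardless of the chosen ordering of branches, and it is then routine that $\tilde f$ is a unary monoid homomorphism. The main obstacle is showing $\tilde f$ factors through $\sim$: this reduces, via induction and the cancellation-style axiom $xz \approx yz \Rightarrow xz^+ \approx yz^+$, to the case of a single elementary retraction identifying two isomorphic subtrees attached at a common vertex, which is handled by idempotency of the corresponding $+$-idempotent in $M$. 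Uniqueness of $\tilde f$ follows since any unary monoid homomorphism is determined by its values on the generators, and single-edge trees generate $\mathrm{LUT}^1(X)$ under multiplication and $+$.
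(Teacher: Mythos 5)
This theorem is stated in the paper as a cited result of Kambites~\cite{KAM,KAM2}; the paper itself supplies no proof, so your sketch must be judged on its own terms. Your three-part outline (congruence, adequacy of the quotient, universal property) matches the general shape of Kambites' argument, and the congruence step is essentially sound once one adds the missing observation that componentwise gluing of retractions only shows $T_1S_1$ and $T_2S_2$ both retract onto $\overline{T_1}\,\overline{S_1}$, which need not itself be retract-free; one must then invoke uniqueness of the retract-free retract of that tree.

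There are, however, genuine gaps in the two steps you describe as routine. First, $x^2\approx x \rightarrow x\approx x^+$ and $xz\approx yz \rightarrow xz^+\approx yz^+$ are quasi-identities, not identities, so they cannot be ``verified tree-by-tree after retracting'' the way the equational laws can; for instance $\overline{TS}=\overline{T'S}\Rightarrow \overline{TS^+}=\overline{T'S^+}$ is a real assertion about how retractions of $TS$ interact with the $T$-part and needs its own argument. Second, and more seriously, the heart of the universal property is showing $\tilde f(T) = \tilde f(\overline{T})$, and your reduction of this to ``a single elementary retraction identifying two isomorphic subtrees at a common vertex, handled by idempotency'' is not correct as stated. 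A retraction of a tree typically absorbs a branch $B$ at a vertex $v$ into a strictly \emph{larger} branch $B'$ at $v$; the two branches are not isomorphic. What is actually needed is an absorption law: for left $X$-trees $B$, $B'$ rooted at a common vertex with $B$ embedding in $B'$, one must have $\tilde f(B)^+\,\tilde f(B')^+ = \tilde f(B')^+$ in $M$. This is a statement about the natural partial order on $E(M)$, not mere idempotency, and proving it requires a nontrivial induction on the trees using $x^+x\approx x$, $(xy)^+\approx(xy^+)^+$, and commutativity of $+$-idempotents. That absorption lemma is precisely the technical core of Kambites' proof; without it your sketch does not close.
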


We will later require the following result, also from \cite{KAM}, which we state here.

\begin{proposition}[{\cite[Proposition 4.8]{KAM}}]\label{prop:idems}
    Let $T$ be an $X$-tree. The following are equivalent:
    \begin{enumerate}
        \item $T$ has no trunk edges,
        \item $\overline{T}$ has no trunk edges,
        \item The start vertex and end vertex of $T$ coincide,
        \item The start vertex and end vertex of $\overline{T}$ coincide,
        \item $\overline{T} \in \fad(X)$ is idempotent, that is $\overline{T}^2 = \overline{T}^+ = \overline{T}$.
    \end{enumerate}
\end{proposition}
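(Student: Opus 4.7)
The plan is to establish the equivalences in two stages: first among the purely graph-theoretic conditions (1)--(4), and then connecting them to the algebraic condition (5).

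For the equivalence of (1)--(4), I would first observe that (1)$\Leftrightarrow$(3) and (2)$\Leftrightarrow$(4) are immediate from the definition of the trunk as the unique directed path from the start vertex to the end vertex: this path has no edges if and only if its two endpoints coincide. For (3)$\Leftrightarrow$(4), the key observation is that every $X$-graph morphism -- and in particular the idempotent endomorphism of $T$ realising $\overline{T}$ as its image -- must fix the distinguished start and end vertices. Hence $\overline{T}$, as a subgraph of $T$, has the same start and end vertices as $T$, and they coincide in one precisely when they coincide in the other.

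For (4)$\Rightarrow$(5), I would assume the start and end of $\overline{T}$ agree at some vertex $v$ and work directly in $\ut^1(X)$. The product $\overline{T} \cdot \overline{T}$ glues two copies of $\overline{T}$ at $v$, producing the wedge of two copies of $\overline{T}$ identified at $v$. This wedge admits an obvious graph endomorphism folding one copy onto the other (the identity on the first copy, and the canonical identification on the second), which is an idempotent graph endomorphism. Its image is isomorphic to $\overline{T}$ and is retract-free, so $\overline{T}^2 = \overline{T}$ in $\fad(X)$. The equality $\overline{T}^+ = \overline{T}$ is even easier: in $\ut^1(X)$, the operation $+$ moves the end onto the start, which is trivial when these already coincide.

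For (5)$\Rightarrow$(4), I would note that $(\overline{T})^+$ in $\ut^1(X)$ has coinciding start and end by the very definition of $+$, so applying the already established (3)$\Rightarrow$(4) to $(\overline{T})^+$ yields that $\overline{(\overline{T})^+}$ also has coinciding start and end. If $\overline{T}^+ = \overline{T}$ in $\fad(X)$, then the retract-free forms agree, so $\overline{T} = \overline{(\overline{T})^+}$ has coinciding start and end. The main obstacle I anticipate is the verification in (4)$\Rightarrow$(5) that the wedge of two copies of $\overline{T}$ at $v$ retracts onto a single copy: the fold is intuitively clear, but one must carefully check it is an $X$-graph endomorphism (respecting labels, directions, and the designated start and end) and confirm that its image is genuinely isomorphic to $\overline{T}$ as a birooted $X$-tree.
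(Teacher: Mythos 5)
The paper does not actually prove this proposition---it is quoted verbatim with a citation to Kambites \cite[Proposition~4.8]{KAM}, so there is no in-paper proof to compare against. Judged on its own, your argument is correct and well-organised, and the key mechanisms (retractions fix the two roots; the trunk is the unique start-to-end directed path; wedging two copies at the common start/end vertex admits the obvious fold) are exactly the right ones.

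Two small remarks. First, in the step (3)$\Leftrightarrow$(4) you correctly observe that the retraction fixes start and end, so those roots in $\overline{T}$ are literally the same vertices as in $T$; it is worth noting this also gives (1)$\Leftrightarrow$(2) directly, since the retraction must send the trunk of $T$ onto the trunk of $\overline{T}$ bijectively (the image of the unique directed start-to-end path is again a directed start-to-end walk, and in a tree a non-backtracking walk between two vertices is the path), so trunk-edge count is a retraction invariant. Second, for (5)$\Rightarrow$(4) your argument leans on the equality $\overline{T}^+ = \overline{T}$, which is explicitly included in the statement of (5), so this is fine; but a reader who interprets (5) as merely ``$\overline{T}^2 = \overline{T}$'' would want either to invoke the adequate quasi-identity $x^2 \approx x \rightarrow x \approx x^+$ to recover $\overline{T}^+ = \overline{T}$, or, more simply, to use the trunk-edge invariance just mentioned: $\overline{T}\cdot\overline{T}$ has twice as many trunk edges as $\overline{T}$, and retraction does not change this count, so $\overline{T}^2 = \overline{T}$ in $\fad(X)$ already forces $\overline{T}$ to have zero trunk edges. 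Either patch is a one-line addition; the substance of your proof is sound.
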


\subsection{Partitions}

Given non-negative integers $n$ and $k$, we let $P(n)$ be the number of partitions of $n$, and $P(n,k)$ be the number of partitions of $n$ into $k$ parts. Similarly, let $Q(n)$ be the number of partitions of $n$ only using \textit{distinct} numbers, and $Q(n,k)$ be the number of of partitions of $n$ into $k$ \textit{distinct} parts. Note the following folklore lemma.
\begin{lemma} \label{lem:FolklorePartitions}
    Let $n, k \in \N$. Then, 
    \begin{itemize}
        \item $P(n,k) = P(n-1,k-1) + P(n-k,k)$,
        \item $Q(n,k) = Q(n-k,k-1) + Q(n-k,k)$,
        \item $P(n,k) = Q(n + c_k,k)$ where $c_k = \binom{k}{2}$.
    \end{itemize}
\end{lemma}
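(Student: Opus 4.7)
All three identities admit short bijective proofs, and since the lemma is flagged as folklore I would present the three arguments back-to-back without fanfare. Throughout, let a partition of $n$ into $k$ parts be written in weakly decreasing form $\lambda_1 \geq \lambda_2 \geq \cdots \geq \lambda_k \geq 1$.

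For the first identity, the plan is to split the partitions of $n$ into $k$ parts according to whether the smallest part $\lambda_k$ equals $1$ or is at least $2$. If $\lambda_k = 1$, deleting it yields a partition of $n-1$ into $k-1$ parts, giving the first summand $P(n-1,k-1)$. Otherwise, subtracting $1$ from every part yields a partition of $n-k$ into $k$ parts (still positive), giving the second summand $P(n-k,k)$. Both operations are clearly bijective on the respective classes.

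For the second identity, I would apply the analogous split to partitions into $k$ \emph{distinct} parts. Subtracting $1$ from every part of $\lambda_1 > \lambda_2 > \cdots > \lambda_k \geq 1$ produces $k$ \emph{distinct} non-negative integers summing to $n-k$. If $\lambda_k = 1$, the new smallest part is $0$, and discarding it yields a partition of $n-k$ into $k-1$ distinct positive parts, contributing $Q(n-k, k-1)$. If $\lambda_k \geq 2$, all new parts remain positive and distinct, contributing $Q(n-k,k)$. Again this is clearly a bijection.

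For the third identity, I would use the classical ``staircase shift'' bijection. Given a partition into $k$ distinct parts $\lambda_1 > \lambda_2 > \cdots > \lambda_k \geq 1$, define $\mu_i := \lambda_i - (k-i)$. Then $\mu_1 \geq \mu_2 \geq \cdots \geq \mu_k \geq 1$ (strict inequalities become weak, and the smallest part stays positive), and $\sum_i \mu_i = \sum_i \lambda_i - \binom{k}{2} = n - c_k$. The inverse $\mu_i \mapsto \mu_i + (k-i)$ restores strictness. Hence $Q(n,k) = P(n - c_k, k)$, which upon relabelling $n \mapsto n + c_k$ gives $P(n,k) = Q(n + c_k, k)$. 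No step here should present any difficulty; the only thing to watch is getting the direction of the shift consistent so that parts remain positive, which is why one subtracts $k-i$ (zero from the largest, $k-1$ from the smallest) rather than $i-1$.
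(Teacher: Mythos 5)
Your three bijective arguments are all correct and standard; the paper itself states this lemma as folklore and provides no proof, so there is nothing to compare against. For the record, each step checks out: in the first two identities the case split on the smallest part $\lambda_k$ is exhaustive and the two maps are manifestly invertible, and in the third the shift $\mu_i = \lambda_i - (k-i)$ turns strict inequalities into weak ones while keeping $\mu_k = \lambda_k \geq 1$, with the sum dropping by exactly $\binom{k}{2}$.
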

Throughout, we write $[k] = \{0,\dots,k\}$ for non-negative $k$. As with trunk vertices of $X$-trees, we index general $(k+1)$-tuples by $[k]$, that is, we write such a tuple as $\sigma = (\sigma_0,\dots,\sigma_k)$.

\section{Properties of monogenic free left adequate monoids} \label{sec:PropsOfMonAde}

We collect here some properties of monogenic free left adequate monoids. Let $T$ be a left $a$-tree. Since $E(\flad(a)) \hookrightarrow E(\fad(a))$, it follows from Proposition \ref{prop:idems} that $\overline{T} \in \flad(a)$ is idempotent if and only if the start vertex and end vertex of $T$ coincide. For any such $T$, there certainly exists a retraction from $T$ to a longest path in $T$ -- we have shown the following. 

\begin{proposition}\label{prop:idemsofmono}
     $E(\flad(a)) = \left\{(a^j)^+ \colon j \in \N_0\right\}$.
\end{proposition}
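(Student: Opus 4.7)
The plan is to establish both inclusions separately. For the inclusion $\{(a^j)^+ \colon j \in \N_0\} \subseteq E(\flad(a))$, each tree $(a^j)^+$ is a single branch of length $j$ rooted at a common start–end vertex, so by Proposition~\ref{prop:idems} it is an idempotent of $\flad(a)$.

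For the reverse inclusion, I would take an arbitrary $e = \overline{T} \in E(\flad(a))$ with $T$ a left $a$-tree. As noted in the paragraph preceding the statement, the coincidence of start and end vertices (furnished by Proposition~\ref{prop:idems}) combined with the left-tree property forces $T$ to be a rooted tree at this common vertex $v$ with all edges directed away from $v$: the underlying undirected graph being a tree gives a unique undirected path from $v$ to each vertex, and this path must be directable from $v$ by the left-tree hypothesis. The key claim is then that if $j$ denotes the depth of $T$, a longest directed path $P$ from $v$ is the image of a retraction on $T$, and that $P$ is isomorphic as an $a$-tree (with start $=$ end $= v$) to $(a^j)^+$.

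I would construct the retraction by sending each vertex $u$ of $T$ to the unique vertex of $P$ at the same depth as $u$, which is well-defined since $P$ has depth $j$ and every vertex of $T$ has depth at most $j$. Because every edge is labelled $a$ and carries a vertex of depth $d$ to one of depth $d+1$, this assignment is a label-preserving directed graph endomorphism; it restricts to the identity on $P$ and is therefore idempotent with image $P$. Since $(a^j)^+$ consists of a simple directed path of distinct vertices, it admits no non-trivial retractions, so uniqueness of the retract-free retract (invoked in Section~\ref{sec:prelims:trees}) yields $\overline{T} = (a^j)^+$.

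The main obstacle — in reality very mild — is verifying that the intended depth-preserving map is indeed a valid $a$-graph endomorphism, and that its image coincides with some $(a^j)^+$. In the monogenic setting this is essentially forced: the single edge label removes any incompatibility in labelling, and the rooted-tree structure ensures directions align, so the construction reduces to a one-line check.
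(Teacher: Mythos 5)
Your proof is correct and follows essentially the same route as the paper: identify idempotents with trees whose start and end vertices coincide via Proposition~\ref{prop:idems}, then retract such a (rooted) left $a$-tree onto a longest path to obtain $(a^j)^+$. You merely make explicit the depth-preserving retraction and the retract-freeness of the path, which the paper asserts without detail.
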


\begin{corollary}\label{cor:productofmonoidems}
    For idempotents $e,f \in E(\flad(a))$, $ef = \left(a^{\max(|e|,|f|)}\right)^+$.
\end{corollary}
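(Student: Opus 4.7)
The plan is to combine Proposition \ref{prop:idemsofmono} with the tree description of $\flad(a)$ from Theorem \ref{thm:flad}. By Proposition \ref{prop:idemsofmono}, write $e = (a^i)^+$ and $f = (a^j)^+$ where $i = |e|$ and $j = |f|$. (Strictly speaking one should note that the length $|e|$ of $e$ viewed as a tree equals the exponent $i$; this is immediate from the tree description of the $+$-operation, which moves the end vertex to the start vertex without adding or removing edges.)

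Next I would unpack the product $ef$ geometrically. Under Theorem \ref{thm:ut}, the element $(a^i)^+ \in \mathrm{LUT}^1(a)$ is represented by the left $a$-tree $S_i$ consisting of a single path of $i$ edges emanating from a vertex $v$, where $v$ is simultaneously the start and end vertex. Gluing end-to-start, the product $S_i \cdot S_j$ in $\mathrm{LUT}^1(a)$ is the left $a$-tree with a single vertex (playing the role of both start and end) from which two disjoint directed paths of lengths $i$ and $j$ emanate.

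Then I would produce the retraction explicitly. Assume without loss of generality $i \leq j$. There is a directed graph endomorphism of $S_i \cdot S_j$ that collapses the length-$i$ path onto the initial segment of length $i$ of the length-$j$ path (identifying the $k$-th vertex of the short branch with the $k$-th vertex of the long branch for each $0 \leq k \leq i$), and is the identity on the length-$j$ path. This is an idempotent endomorphism fixing the start and end vertex, and its image is isomorphic to $S_{\max(i,j)}$, which is retract-free. Hence $\overline{S_i \cdot S_j} = S_{\max(i,j)}$, which represents $(a^{\max(i,j)})^+$.

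Applying Theorem \ref{thm:flad}, which identifies $\flad(a)$ with $\mathrm{LUT}^1(a)$ modulo the retraction map, yields $ef = (a^{\max(i,j)})^+$ as claimed. There is no real obstacle: the only point requiring a moment's care is the observation that the tree $S_{\max(i,j)}$ is genuinely retract-free, so that the retraction produced is truly the map $T \mapsto \overline{T}$.
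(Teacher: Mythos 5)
Your proof is correct and takes essentially the same approach as the paper: represent the idempotents as single-path trees via Proposition~\ref{prop:idemsofmono}, observe that their product in $\mathrm{LUT}^1(a)$ is a two-branch tree sharing the common start/end vertex, and retract the shorter branch onto the longer one. The paper's version is one line ("the shorter branch may retract onto the longer branch"); you simply spell out the retraction map and the retract-freeness of the image, which the paper leaves implicit.
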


\begin{proof}
    By Proposition \ref{prop:idemsofmono}, $e = \left(a^{|e|}\right)^+$ and $f = \left(a^{|f|}\right)^+$. Clearly then 
    \[ef = \left(a^{|e|}\right)^+\left(a^{|f|}\right)^+ = \left(a^{\max(|e|,|f|)}\right)^+\]
    as the shorter branch of $\left(a^{|e|}\right)^+\times \left(a^{|f|}\right)^+$ may retract onto the longer branch.
\end{proof}

\begin{lemma}\label{lem:fladnonnested}
    Let $x,y,z \in \flad(a)$. Then we have $\varepsilon^+ = \varepsilon$, $(x^+)^+ = x^+$, and $(xy^+z)^+ = (xy)^+(xz)^+$.
\end{lemma}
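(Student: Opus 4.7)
The plan is to work directly with the geometric model of $\flad(a)$ from Theorems~\ref{thm:ut} and~\ref{thm:flad}, together with the idempotent classification in this section. The first two identities fall out of the tree picture almost immediately. For $\varepsilon^+ = \varepsilon$: the identity $\varepsilon$ is the edgeless tree whose start and end vertices coincide, and applying $+$ (which moves the end vertex onto the start) has no effect. For $(x^+)^+ = x^+$: choose a left $a$-tree $T$ with $x = \ol T$; then $T^+$ has coinciding start and end vertices by definition of $+$, and since retractions preserve the distinguished vertices, so does $\ol{T^+}$. By Proposition~\ref{prop:idems}, $\ol{T^+}$ is therefore idempotent in $\fad(a)$ and in particular satisfies $(\ol{T^+})^+ = \ol{T^+}$, so applying $+$ a second time is vacuous.

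For the third identity $(xy^+z)^+ = (xy)^+(xz)^+$, I would observe that both sides are idempotents of $\flad(a)$ and then use Proposition~\ref{prop:idemsofmono} to reduce equality to a numerical comparison. The left-hand side has the form $T^+$ and so is idempotent by Proposition~\ref{prop:idems} together with the embedding $E(\flad(a)) \hookrightarrow E(\fad(a))$; the right-hand side is a product of idempotents, which is again idempotent by Corollary~\ref{cor:productofmonoidems}. Fixing retract-free representatives $X, Y, Z$ for $x, y, z$, the crux is that for any left $a$-tree $T$ the retract-free retract of $T^+$ equals $(a^{L(T)})^+$, where $L(T)$ denotes the length of the longest directed path from the start vertex of $T$; the explicit retraction (already used implicitly in the proof of Proposition~\ref{prop:idemsofmono}) sends each vertex to the vertex of equal depth on a fixed longest path.

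For the left-hand side, the tree $XY^+Z$ consists of $X$ from its start to the junction vertex $v$, with $Y$ and $Z$ emanating from $v$, whence $L(XY^+Z) = \max\{L(X),\, k_X + L(Y),\, k_X + L(Z)\}$, where $k_X$ denotes the number of trunk edges of $X$: a longest directed path either stays within $X$ or traverses $X$'s trunk to $v$ before entering $Y$ or $Z$. Applying Proposition~\ref{prop:idemsofmono} gives $(xy)^+ = \left(a^{\max(L(X),\, k_X + L(Y))}\right)^+$ and an analogous formula for $(xz)^+$; combining these via Corollary~\ref{cor:productofmonoidems} produces $(a^k)^+$ with $k$ the maximum of the two indices, matching $L(XY^+Z)$. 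The main obstacle is the explicit longest-path bookkeeping on the left-hand side; all the algebraic weight is then carried by Proposition~\ref{prop:idemsofmono} and Corollary~\ref{cor:productofmonoidems}.
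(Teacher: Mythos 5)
Your proposal is correct and follows essentially the same route as the paper: the first two identities are immediate (the paper gets them from the defining quasi-identities, you from the tree model), and for the third identity both arguments reduce to the observation that $w^+$ is the idempotent whose length is a longest path from the start (Proposition~\ref{prop:idemsofmono}), then compare longest-path lengths and combine idempotents via Corollary~\ref{cor:productofmonoidems}. Your explicit $\max\{L(X),\,k_X+L(Y),\,k_X+L(Z)\}$ bookkeeping is just a more detailed rendering of the paper's case analysis of where the longest path of $xy^+z$ lies.
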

\begin{proof}
    The equalities $\varepsilon^+ = \varepsilon$ and $(x^+)^+ = x^+$ are immediate from the defining quasi-identities, and it remains to show $(xy^+z)^+ = (xy)^+(xz)^+$.

    By Proposition~\ref{prop:idemsofmono}, note that, for any $w \in \flad(a)$, $w^+$ is the idempotent of the same length as a longest path in $w$ as all other paths can retract into the longest path.

    Thus, $(xy^+z)^+$ is either equal to $x^+$, $(xy)^+$, or $(xz)^+$ depending on if the longest path is contained entirely in $x$, $xy$, or $xz$ respectively. 
    Moreover, by Corollary \ref{cor:productofmonoidems}, the product of two idempotents is equal to the longer idempotent, and so\linebreak
    $(xy^+z)^+ = x^+(xy)^+(xz)^+$.

    Finally, note that $(xy)^+$ is always at least as long as $x^+$, so 
    \[(xy^+z)^+ = x^+(xy)^+(xz)^+ = (xy)^+(xz)^+. \qedhere\]
\end{proof}

The third identity of Lemma \ref{lem:fladnonnested} appears in \cite{BAT} and its $\ast$-dual in \cite{FOU5}, where it defines \textit{right h-adequate monoids}. It follows that $\flad(a)$ is also the monogenic free \textit{left h-adequate monoid}. It is seen in \cite{FOU5} that the monogenic free left h-adequate monoid (and hence $\flad(a)$) is \textit{residually finite} and \textit{hopfian}, and may be given as a quotient of $(\mathbb{N},+) \ast (\mathbb{N},\max)$ with the alternating integers of an element describing segments of the trunk and (longest) branch lengths.

\begin{lemma} \label{lem:removebranches}
    Let $x,y \in \flad(a)$ and $\Theta(x)$ be the subtree of $x$ exactly consisting of the trunk edges. Then, $xyx = \Theta(x)yx$.
\end{lemma}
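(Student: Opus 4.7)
My plan is to exhibit $\Theta(x)yx$ as a retract of $xyx$ at the level of $a$-trees and then apply Theorem~\ref{thm:flad}. Identifying $\Theta(x)yx$ with the sub-$a$-tree of $xyx$ obtained by removing all non-trunk vertices (and incident edges) of the first copy of $x$, it suffices to construct an idempotent graph endomorphism $\phi \colon xyx \to xyx$ whose image is precisely this sub-$a$-tree, since uniqueness of the retract-free retract will then give $\overline{xyx} = \overline{\Theta(x)yx}$.

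The construction of $\phi$ rests on the following geometric observation: inside $xyx$, each branch of the first copy of $x$ at trunk vertex $v_i$ appears isomorphically as a branch at the corresponding trunk vertex $u_i$ of the second copy of $x$, and $v_i$ and $u_i$ are joined by a directed trunk path of common length $\ell := k + m$, where $k$ and $m$ are the trunk lengths of $x$ and $y$ respectively. I define $\phi$ to be the identity on the sub-$a$-tree $\Theta(x)yx$, and on a non-trunk vertex $w$ of the first $x$ sitting at depth $d \geq 1$ in the branch at $v_i$, I set $\phi(w)$ to be the trunk vertex of $xyx$ obtained by walking $d$ edges forward from $v_i$ when $d \leq \ell$, and the depth-$(d - \ell)$ ancestor of the canonical counterpart $w^{(2)}$ of $w$ in the second $x$'s branch at $u_i$ when $d > \ell$.

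Idempotency and the preservation of the start and end vertices are immediate, since $\phi$ restricts to the identity on $\Theta(x)yx$, which contains the entire trunk. The main obstacle is verifying that $\phi$ preserves edges, which reduces to a case analysis on the depths of the endpoints. When both depths lie in $\{0, \ldots, \ell\}$, the image of an edge is a genuine trunk edge of $xyx$; when both strictly exceed $\ell$, the image lies inside the branch of the second $x$ at $u_i$, where consecutive-depth ancestors of a fixed vertex are joined by an $a$-edge by the tree structure. The most delicate situation is the seam $d = \ell$, which requires that $u_i$ has an $a$-edge to the root of the relevant branch in the second $x$; this holds directly from the definition of the second copy of $x$. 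The edge from $v_i$ to the root of a branch in the first $x$ is sent to the trunk edge $(v_i, v_{i+1})$, and label preservation is automatic since every edge is labelled $a$. Once $\phi$ is verified to be a retraction, the lemma follows.
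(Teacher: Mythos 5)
Your proposal is correct and is essentially the same argument as the paper's: both proofs rest on retracting each branch of the leftmost copy of $x$ forward along the trunk and into the corresponding branch of the rightmost copy, the paper phrasing this after first writing $x$ and $y$ in the normal form $(a^{n_0})^+\prod a(a^{n_i})^+$ via Proposition~\ref{prop:idemsofmono}, while you build that same folding map explicitly on the tree $xyx$ and conclude by uniqueness of the retract-free retract. Your depth-by-depth verification (including the seam at depth $\ell=k+m$ and the check that one never walks off the trunk since $i+d\leq 2k+m$) is just a more detailed rendering of the paper's one-line appeal to the same retraction.
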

\begin{proof}
    The monoid $\flad(a)$ is generated by $a$ and $E(\flad(a))$ as a monoid. It follows from Proposition \ref{prop:idemsofmono} that we may write $x$ and $y$ in the following forms: 
    \[
        x = (a^{n_0})^+\prod_{i=1}^s a(a^{n_i})^+ \text{ and } y = (a^{m_0})^+\prod_{i=1}^t a(a^{m_i})^+
    \]for some $s,t,n_i,m_i \in \mathbb{N}_0$. Then, we obtain that $xyx$ is given by
    \begin{equation*}
        xyx = a^s(a^{m_0})^+\left(\prod_{i=1}^t a(a^{m_i})^+\right)(a^{n_0})^+\left(\prod_{i=1}^s a(a^{n_i})^+\right) = \Theta(x)yx
    \end{equation*}as the leftmost occurrence of any $(a^{n_i})^+$ can be retracted into the path consisting of a portion of the trunk and the branch of the rightmost copy of $(a^{n_i})^+$.
\end{proof}

\section{Growth of monogenic free adequate monoids}\label{sec:growth}

We consider here the growth of $\flad(a)$. By appealing to Proposition \ref{prop:ladradduality}, dual results will apply to $\frad(a)$. Recall that we consider left adequate monoids to be $X$-generated as unary monoids, and free adequate monoids to be $X$-generated as biunary monoids. 

\subsection{Sphere growth in adequate monoids}

Given a word $w$ in the free $X$-generated unary monoid $\fu(X)$ or the free $X$-generated biunary monoid $\fbu(X)$, the \textit{length} of $w$, denoted $|w|$, is the number appearances of letters of $X$ in $w$.

\begin{example}
    In $\fu(a)$, $|a\varepsilon^+(aa^+)^+| = 3$. In $\fu(\{a,b\})$, $|(ab)^+b^+\varepsilon^+(a^+)^+| = 4$.
\end{example}

For $n \in \N_0$, the \textit{ball of size $n$ in $\flad(X)$}, denoted $B_L^X(n)$, is the set of trees $T \in \flad(X)$ such that there exists some $(2,1,0)$-word $w \in \fu(X)$ with $|w| \leq n$ and $T = \psi(w)$ where $\psi$ is the canonical surjective morphism $\fu(X) \twoheadrightarrow \flad(X)$. The \textit{sphere of size $n$ in $\flad(X)$}, denoted $S_L^X(n)$, is the set \[S_L^X(n) := B_L^X(n) \setminus B_L^X(n-1).\]

When $X$ is implicitly understood, we suppress our superscript notation and write simply $B_L(n)$ and $S_L(n)$.

We analogously define: \begin{itemize}
    \item The ball of size $n$, $B_{\ut}(n)$, in $\ut^1(X)$ via the canonical surjective morphism $\fbu(X) \twoheadrightarrow \ut^1(X)$.
    \item The ball of size $n$, $B_{\mathrm{LUT}}(n)$, in $\mathrm{LUT}^1(X)$ via the canonical surjective morphism $\fu(X) \twoheadrightarrow \mathrm{LUT}^1(X)$.
\end{itemize}
In $\fad(X)$, we also use the following similar notation and define:
\begin{itemize}
    \item The ball of size $n$, $B(n)$, in $\fad(X)$ via the canonical surjective morphism $\fbu(X) \twoheadrightarrow \fad(X)$.
    \item The sphere of size $n$, $S(n)$, in $\fad(X)$ to be $S(n) := B(n) \setminus B(n-1)$.
\end{itemize}

We now study balls and spheres, and show that we can interpret these on the level of $X$-trees from Section \ref{sec:prelims:trees}.

\begin{lemma}
    Let $n \in \N_0$. \begin{enumerate}[(i)]
        \item Let $T \in \ut^1(X)$. Then $T$ has at most $n$ edges if and only if $T \in B_{\ut}(n)$.
        \item Let $T \in \mathrm{LUT}^1(X)$. Then $T$ has at most $n$ edges if and only if $T \in B_{\mathrm{LUT}}(n)$. 
    \end{enumerate}
\end{lemma}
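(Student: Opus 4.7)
The plan is to establish a stronger fact that implies both directions at once: for every word $w$ in $\fbu(X)$ (resp. $\fu(X)$), the image $\psi(w)$ has \emph{exactly} $|w|$ edges. Both equivalences in the lemma then follow immediately from the definitions of $B_{\ut}(n)$ and $B_{\mathrm{LUT}}(n)$.

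For the implication ``$T \in B_{\ut}(n) \Rightarrow T$ has at most $n$ edges'', I would proceed by structural induction on $w \in \fbu(X)$, using the concrete description of the operations in Theorem~\ref{thm:ut}. The base cases $w = \varepsilon$ (zero edges, $|\varepsilon|=0$) and $w = x$ for $x \in X$ (one edge, $|x|=1$) are immediate. For a product $w = uv$, the multiplication $\psi(u)\psi(v)$ glues $\psi(u)$ and $\psi(v)$ along a single vertex, so edge counts add; and for $w = u^+$ or $w = u^\ast$, the unary operations only relocate the distinguished start/end vertices and preserve the underlying edge-labelled graph. Hence the induction gives $|\psi(w)|_{\text{edges}} = |w|$, and restricting to $\fu(X)$ yields the corresponding statement for $\mathrm{LUT}^1(X)$.

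For the converse, I would show by induction on the number of edges $n$ that every $T$ admits a word $w$ with $|w| = n$ and $\psi(w) = T$. If $n = 0$ then $T = \varepsilon = \psi(\varepsilon)$. Otherwise, decompose $T$ along its trunk $v_0 \xrightarrow{a_1} v_1 \xrightarrow{a_2} \cdots \xrightarrow{a_k} v_k$: each trunk vertex $v_i$ carries an idempotent subtree $I_i$ (the union of all branches at $v_i$, with start $=$ end $= v_i$), and in $\ut^1(X)$ we have
\[ T = I_0 \cdot a_1 \cdot I_1 \cdot a_2 \cdots a_k \cdot I_k. \]
Each $I_i$ is itself a product of its individual branches. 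A branch at $v_i$ beginning with an outgoing edge $v_i \xrightarrow{x} u$ and subtree $B$ rooted at $u$ can be written as $(x \cdot B)^+$, while a branch beginning with an incoming edge $u \xrightarrow{x} v_i$ and subtree $B$ rooted at $u$ can be written as $(B \cdot x)^\ast$; in either case the resulting subtree is idempotent at $v_i$. Each such $B$ has strictly fewer edges than $T$, so the inductive hypothesis supplies a word of the correct length, and summing contributions gives a word of total length exactly $n$.

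The main obstacle is verifying that the branch decomposition in the converse actually reassembles to $T$ in $\ut^1(X)$ with the correct edge directions, which requires both unary operations. Part (ii) is easier: in a left $X$-tree, every non-trunk edge at a trunk vertex is outgoing, so only $+$ is needed and the argument restricts cleanly from $\fbu(X)$ to $\fu(X)$.
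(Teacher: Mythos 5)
Your proof is correct, and its first half is exactly the paper's argument: edge counts are invariant under $+$ and $\ast$, additive under multiplication, and each generator has one edge, so $\psi(w)$ has exactly $|w|$ edges for every word $w$. Where you diverge is the converse. The paper does not decompose trees at all: since Theorem~\ref{thm:ut} already states that $\ut^1(X)$ (resp.\ $\mathrm{LUT}^1(X)$) is $X$-generated as a biunary (resp.\ unary) monoid, every tree $T$ is $\psi(w)$ for \emph{some} word $w$, and the exact edge-count computation then forces $|w|$ to equal the number of edges of $T$, so $T \in B_{\ut}(n)$ whenever $T$ has at most $n$ edges. Your explicit trunk-and-branch decomposition, writing $T = I_0 a_1 I_1 \cdots a_k I_k$ with each branch expressed as $(xB)^+$ or $(Bx)^\ast$, is essentially a re-proof of that generation statement (it is the standard decomposition from Kambites' construction), and it is carried out correctly, including the observation that only $+$ is needed in the left-tree case. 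So your route is sound and self-contained, at the cost of redoing work the paper simply cites; the paper's route is shorter because it leans on Theorem~\ref{thm:ut} for surjectivity.
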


\begin{proof}
    We show (i), with (ii) following similarly. Let $T \in \ut^1(X)$. 
    
    First note that if $T$ has $n$ edges, then $T^+$ and $T^*$ also have $n$ edges by Theorem~\ref{thm:ut}. Similarly, if $S$ and $T$ has $n$ and $m$ edges respectively, then $ST$ has $n+m$ edges. The result then follows since each generator $x \in X$ corresponds to a tree with exactly one edge.
\end{proof}

\begin{proposition} \label{prop:ballisedges}
    Let $n \in \N_0$.
    \begin{enumerate}[(i)]
        \item Let $T \in \fad(X)$. Then $T$ has at most $n$ edges if and only if $T \in B(n)$.
        \item Let $T \in \flad(X)$. Then $T$ has at most $n$ edges if and only if $T \in B_L(n)$.
    \end{enumerate}
\end{proposition}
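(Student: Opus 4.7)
The plan is to factor the canonical surjection through $\ut^1(X)$ (resp.\ $\mathrm{LUT}^1(X)$) and combine the preceding lemma with the observation that retraction cannot increase the number of edges. Throughout, I identify elements of $\fad(X)$ (resp.\ $\flad(X)$) with their retract-free representatives, as in Theorem~\ref{thm:flad}.

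For part (i), note that the canonical surjection $\fbu(X) \twoheadrightarrow \fad(X)$ factors as
\[\fbu(X) \twoheadrightarrow \ut^1(X) \twoheadrightarrow \fad(X),\]
where the second map is $S \mapsto \overline{S}$. For the forward direction, suppose $T \in \fad(X)$ has at most $n$ edges. Viewing $T$ as a (retract-free) element of $\ut^1(X)$, the previous lemma supplies a word $w \in \fbu(X)$ with $|w| \leq n$ whose image in $\ut^1(X)$ is $T$. Since $T$ is retract-free we have $\overline{T} = T$, so the composite map sends $w$ to $T$ and hence $T \in B(n)$. For the converse, suppose $T \in B(n)$ and let $w \in \fbu(X)$ with $|w| \leq n$ map to $T$ in $\fad(X)$. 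Write $S$ for the image of $w$ in $\ut^1(X)$; by the previous lemma $S$ has at most $n$ edges, and by construction $T = \overline{S}$. Since $\overline{S}$ is the image of an idempotent directed graph endomorphism of $S$, it is (up to graph isomorphism) a subgraph of $S$, so
\[|T| = |\overline{S}| \leq |S| \leq n,\]
as required.

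Part (ii) proceeds identically, replacing the factorisation $\fbu(X) \twoheadrightarrow \ut^1(X) \twoheadrightarrow \fad(X)$ by the left-adequate analogue $\fu(X) \twoheadrightarrow \mathrm{LUT}^1(X) \twoheadrightarrow \flad(X)$ and invoking part (ii) of the previous lemma in place of part (i).

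I expect no serious obstacle. The only non-cosmetic point is the claim that $|\overline{S}| \leq |S|$; this is essentially immediate, since a retraction is an idempotent graph endomorphism whose image coincides with its set of fixed points and is therefore (isomorphic to) a subgraph of $S$. The rest is routine bookkeeping about how words, trees, and the quotient maps interact.
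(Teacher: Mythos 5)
Your proof is correct and follows essentially the same route as the paper's: both directions factor the canonical surjection through $\ut^1(X)$, apply the preceding lemma about edge counts in $\ut^1(X)$, and close by observing that retraction cannot increase the number of edges. The only cosmetic difference is that the paper phrases the forward direction via an explicit (non-morphism) injective map $\nu\colon \fad(X)\to \ut^1(X)$ rather than your ``view $T$ as a retract-free element,'' but these are the same move.
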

\begin{proof}
    Again, we only show (i) with (ii) following similarly. Let $T \in \fad(X)$.

    Let $\nu \colon \fad(X) \to \ut^1(X)$ be the natural injective map that maps a tree to the same tree embedded in $\ut^1(X)$ (note that this is not a morphism of unary monoids).

    Suppose $T$ has at most $n$ edges. Then certainly $\nu(T)$ has at most $n$ edges and hence $\nu(T) \in B_{\ut}(n)$. Thus, as $\fad(X)$ is a quotient of $\ut^1(X)$, $T \in B(n)$.

    Now, suppose $T \in B(n)$ and let $w \in \fbu(X)$ be a word of length at most $n$ representing $T$ in $\fad(X)$. Let $w' = \mu(w) \in \ut^1(X)$ where $\mu$ is the canonical surjective morphism $\fbu(X) \to \ut^1(X)$. Then, $w' \in B_{\ut}(n)$ and hence $w'$ is a tree with at most $n$ edges. Finally, $T = \overline{w'}$ and so $T$ has at most $n$ edges, as retracting only can remove edges.
\end{proof}

We immediately deduce the following, which allows us to compute the \textit{spherical growth} of $\flad(X)$ and $\fad(X)$ by counting edges of trees.

\begin{corollary}\label{cor:sphereisedges}Let $n \in \N_0$.
    \begin{enumerate}[(i)]
        \item Let $T \in \flad(X)$. Then $T$ has exactly $n$ edges if and only if $T \in S_L(n)$.
        \item Let $T \in \fad(X)$. Then $T$ has exactly $n$ edges if and only if $T \in S(n)$.
    \end{enumerate}
\end{corollary}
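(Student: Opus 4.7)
The plan is to observe that this is a direct consequence of Proposition~\ref{prop:ballisedges} combined with the definitions of $S_L(n)$ and $S(n)$. The statement ``$T$ has exactly $n$ edges'' can be rephrased as ``$T$ has at most $n$ edges and $T$ does not have at most $n-1$ edges''. Using Proposition~\ref{prop:ballisedges}, each of these two conditions translates directly into membership in the corresponding balls.

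Concretely, for part (i), I would fix $T \in \flad(X)$ and note that $T$ has exactly $n$ edges precisely when $T \in B_L(n)$ but $T \notin B_L(n-1)$, which by definition is exactly $T \in B_L(n) \setminus B_L(n-1) = S_L(n)$. Part (ii) follows identically using part (i) of Proposition~\ref{prop:ballisedges} in place of part (ii), and replacing $B_L$ and $S_L$ by $B$ and $S$ throughout. A small boundary remark would handle $n = 0$, where $B_L(-1)$ and $B(-1)$ are taken to be empty so that $S_L(0)$ and $S(0)$ consist exactly of the unique tree with no edges, namely $\varepsilon$.

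Since the work has already been done in Proposition~\ref{prop:ballisedges}, there is no real obstacle; the only thing to be careful about is the edge case $n = 0$ and ensuring the set-theoretic difference is unpacked correctly in both directions (the forward and reverse implications). The proof should be a few lines long and purely formal.
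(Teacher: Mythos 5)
Your proposal is correct and matches the paper's (implicit) argument exactly: the paper states the corollary as an immediate deduction from Proposition~\ref{prop:ballisedges} with no further proof, and your unpacking of the set difference $S_L(n) = B_L(n) \setminus B_L(n-1)$ (and similarly for $S(n)$) is precisely what that deduction amounts to. Your boundary remark for $n=0$ is a sensible addition the paper does not bother to spell out.
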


\subsection{Growth of the monogenic free left adequate monoid}

We now aim to compute $|S_L(n)|$ for the monogenic free left adequate monoid $\flad(a)$ using Corollary \ref{cor:sphereisedges}. We first consider the following set and its cardinality -- for $n \in \mathbb{N}_0$ and $k \leq n$, we define\[S_L(n,k) := \left\{T \in \mathrm{FLAd}(a)\colon  T \textrm{ has } n \textrm{ edges and }k \textrm{ trunk edges}\right\}.\]Recall we say there exists a branch at vertex $v_i$ if there exists a non-trunk edge with initial vertex $v_i$.
\begin{proposition} \label{prop:SumFormForTrees}
    For all $0 \leq k \leq n$,
    \begin{align*}
        |S_L(n,k)| &= \sum_{Y \subseteq [k]} Q\left( n-k - \sum_{i \in Y} i, |Y| \right).
    \end{align*} 
\end{proposition}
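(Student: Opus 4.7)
The plan is to parametrise retract-free left $a$-trees with $n$ edges and $k$ trunk edges by a pair $(Y, \lambda)$ where $Y \subseteq [k]$ and $\lambda$ is a partition into $|Y|$ distinct positive parts, then reindex $Y$ to obtain the stated formula.

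I would first determine the shape of such a tree $T$ with trunk $v_0 \to v_1 \to \cdots \to v_k$. Because the trunk is the unique directed path from $v_0$ to $v_k$, any retraction of $T$ must fix every trunk vertex. Since all edges are labelled $a$, two simple paths dangling from a common vertex fold (shorter into longer) under a non-trivial retraction, and any internal branching of a dangling subtree can be collapsed similarly; so at each $v_i$ there is at most one non-trunk subtree attached, and it must be a simple directed path of some length $\ell_i \geq 0$ (with $\ell_i = 0$ meaning no branch). Writing $L(j)$ for the length of the longest directed path starting at $v_j$, the only remaining candidate for a non-trivial retraction at $v_i$ maps the first branch-vertex to $v_{i+1}$ and extends along a length-$(\ell_i - 1)$ path from $v_{i+1}$; so retract-freeness is equivalent to $\ell_i = 0$ or $\ell_i \geq L(i+1) + 2$ for $i < k$, with no condition at $v_k$.

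Next, fix $Y = \{i_1 < \cdots < i_m\} \subseteq [k]$ as the set of indices with $\ell_i > 0$. The inequality $\ell_{i_j} \geq L(i_j+1) + 2$ forces $L(i_j) = \ell_{i_j}$, so a short induction gives $L(i_j+1) = (i_{j+1} - i_j - 1) + \ell_{i_{j+1}}$ for $j < m$ and $L(i_m+1) = k - i_m - 1$. The retract-freeness conditions on branch lengths therefore telescope to $\ell_{i_m} \geq k - i_m + 1$ and $\ell_{i_j} \geq \ell_{i_{j+1}} + (i_{j+1} - i_j) + 1$ for $j < m$. The substitution $\hat{\ell}_j := \ell_{i_j} - (k - i_j)$ then transforms these into $\hat{\ell}_m \geq 1$ and $\hat{\ell}_j \geq \hat{\ell}_{j+1} + 1$; equivalently, $\hat{\ell}_1 > \cdots > \hat{\ell}_m \geq 1$ are $m$ distinct positive integers summing to $(n-k) - (mk - \sigma_Y)$, where $\sigma_Y := \sum_{i \in Y} i$. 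Hence the retract-free trees with branch set exactly $Y$ are in bijection with the partitions counted by $Q(n - k - mk + \sigma_Y, m)$.

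Summing over $Y \subseteq [k]$ and applying the cardinality-preserving involution $Y \mapsto \{k - i : i \in Y\}$ on $2^{[k]}$ (which sends $\sigma_Y$ to $|Y|k - \sigma_Y$) rewrites $\sum_Y Q(n - k - mk + \sigma_Y, |Y|)$ as $\sum_Y Q(n - k - \sigma_Y, |Y|)$, matching the claim. I expect the main obstacle to be the first step: rigorously verifying the structural characterisation and ruling out every exotic retraction (moving trunk vertices, involving non-path branches, or mapping branches across different trunk vertices), since a missed retraction would compromise the enumeration.
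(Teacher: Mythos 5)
Your proposal is correct and is essentially the paper's own argument: you fix the set $Y$ of trunk positions carrying branches, translate retract-freeness into the same chain of inequalities on branch lengths, and subtract the distance-to-the-end from each branch length to biject with partitions into $|Y|$ distinct parts, exactly as the paper's map $\phi(T)=(\ell_i - i)_{i\in Y}$ does. The only differences are cosmetic (indexing branch positions from the start, hence the final complementation involution, and deriving the ``one simple-path branch per trunk vertex'' structure directly rather than citing Lemma~\ref{lem:removebranches}/Proposition~\ref{prop:idemsofmono}).
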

\begin{proof}
    Fix an integer $n \geq 0$, an integer $0 \leq k \leq n$. For any set $Y \subseteq [k]$, we first count the number of retract-free left $a$-trees $T$ which have $n$ edges and trunk vertices $(v_0,v_1,\dots,v_k)$ such that there exists a branch at $v_{k-i}$ if and only if $i \in Y$. Call the set of such trees $\mathcal{T}_Y$.

    Consider a tree $T \in \mathcal{T}_Y$. Recall (for example, from the proof of Lemma \ref{lem:removebranches}), that we may write $T$ as $|Y|$ idempotents of single, non-splitting paths interspersed within a product of $k$ $a$'s. For each $i \in Y$, let $\ell_i$ be the length of the non-trunk branch at vertex $v_{k-i}$. Note that $\sum_{i \in Y}\ell_i$ is the total number on non-trunk edges of $T$.
    
    Define $\phi(T)$ to be the $|Y|$-tuple $(\ell_i-i)_{i \in Y}$, where we order $Y$ from largest to smallest. 
    
    Note that $\ell_i > i$ for all $i$ -- there exists a path of $i$ trunk edges from $v_{k-i}$ to the end vertex, and so if $\ell_i \leq i$ there would exist a non-trivial retraction of the branch onto the trunk. Moreover, if $i, j \in Y$ with $i < j$, then $\ell_j > j-i+\ell_i$ as otherwise there would be a retraction of the branch at $v_{k-j}$ onto the trunk and the branch at $v_{k-i}$. Hence $\ell_j -j > \ell_i - i$. Thus the tuple $\phi(T)$ is a strictly decreasing tuple of positive integers, with total sum $\sum_{i \in Y} \ell_i - i = \sum_{i \in Y} \ell_i - \sum_{i\in Y}i = n-k-\sum_{i \in Y}i$, that is $\phi(T)$ is a partition of $n-k-\sum_{i \in Y}i$ into $|Y|$ distinct parts.

    We now claim that $\phi$ is a bijection from $\mathcal{T}_Y$ to the set of all such partitions. Indeed given such a partition $\tau = (\tau_0,\tau_1,\dots,\tau_r)$ where $Y = \{x_0 < \dots < x_r\}$, define $\psi(\tau)$ to be the tree \[(a^{n_0})^+\prod_{i=1}^{k}a(a^{n_i})^+ \textrm{ where }n_i = \begin{cases}
        i + \tau_j &\textrm{if }i = k-x_j,\\
        0 &\textrm{otherwise.}
    \end{cases}\]
    It is easily verified that $\phi$ and $\psi$ are inverse functions and thus the sets in question have equal cardinality, i.e. $|\mathcal{T}_Y| = Q(n-k-\sum_{i \in Y}i, |Y|)$. Since the subsets of $[k]$ clearly partition all retract-free left $x$-trees with $n$ edges and $k$ trunk edges via the assignment of $Y$ above, it follows that\[|S_L(n,k)| = \sum_{Y \subseteq [k]}|\mathcal{T}_Y| =\sum_{Y \subseteq [k]} Q\left( n-k - \sum_{i \in Y} i, |Y| \right).\qedhere\]
\end{proof}

To proceed, we require the following technical lemma.

\begin{lemma} \label{lem:SumsWithT}
    The following equality holds, for all $m,r,t \in \N$,
    \begin{equation} \label{eq:thelemma} \tag{$\star$}
        \sum_{Y \subseteq[r-1]} Q\left( m - \sum_{i \in Y} i, |Y| + t \right) =
    \sum_{Y \subseteq[r-2]} Q\left( m + t + 1 - \sum_{i \in Y} i, |Y| + t + 1 \right)
    \end{equation}
    where we define $[-1] = \emptyset$.
\end{lemma}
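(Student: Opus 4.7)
The plan is to prove \eqref{eq:thelemma} by induction on $r \geq 1$. Write $L_r(m, t)$ and $R_r(m, t)$ for the left- and right-hand sides of \eqref{eq:thelemma}, respectively.

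For the base case $r = 1$, we have $[0] = \{0\}$ and $[-1] = \emptyset$, so
\[L_1(m, t) = Q(m, t) + Q(m, t + 1) \quad \text{and} \quad R_1(m, t) = Q(m + t + 1, t + 1).\]
Applying the recurrence $Q(n, k) = Q(n - k, k - 1) + Q(n - k, k)$ from Lemma~\ref{lem:FolklorePartitions} with $n = m + t + 1$ and $k = t + 1$ gives immediately $R_1(m, t) = L_1(m, t)$.

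For the inductive step ($r \geq 2$), the key observation is that both $L_r$ and $R_r$ satisfy the same one-step recursion
\[X_r(m, t) = X_{r - 1}(m, t) + X_{r - 1}\bigl(m - (r - 1),\, t + 1\bigr), \qquad X \in \{L, R\}.\]
For $L_r$, I would split the sum over $Y \subseteq [r-1]$ by whether $r - 1 \in Y$. Subsets with $r - 1 \notin Y$ clearly recover $L_{r - 1}(m, t)$, while writing $Y = Y' \cup \{r - 1\}$ with $Y' \subseteq [r-2]$ for the remaining subsets shifts the parameters to $(m - (r-1),\, t + 1)$, yielding $L_{r - 1}(m - (r-1),\, t + 1)$. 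For $R_r$, I would analogously split the sum over $Y \subseteq [r-2]$ on whether $r - 2 \in Y$: subsets not containing $r - 2$ give $R_{r - 1}(m, t)$, and — after the arithmetic identity $m + t + 1 - (r - 2) = (m - (r-1)) + (t + 1) + 1$ — the subsets containing $r - 2$ give $R_{r - 1}(m - (r-1),\, t + 1)$. Applying the inductive hypothesis $L_{r - 1} = R_{r - 1}$ to both summands then yields $L_r = R_r$.

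The main step is recognising that both sides decay under the same parameter transformation $(m, t) \mapsto (m - (r-1),\, t + 1)$; once this parallel structure is identified, the argument is essentially bookkeeping. The only mild obstacle is matching the arguments of $Q$ in the $R$-side splitting to those of $R_{r - 1}(m - (r-1),\, t+1)$, which reduces to the single arithmetic identity noted above.
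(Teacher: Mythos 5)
Your proof is correct and follows essentially the same route as the paper: induction on $r$ with the base case $r=1$ handled by the recurrence $Q(n,k)=Q(n-k,k-1)+Q(n-k,k)$, and the inductive step obtained by splitting each sum according to whether the largest index ($r-1$ on the left, $r-2$ on the right) lies in $Y$ and applying the hypothesis at $(r-1,m,t)$ and $(r-1,m-(r-1),t+1)$. Your packaging of the step as ``both sides satisfy the same recursion $X_r(m,t)=X_{r-1}(m,t)+X_{r-1}(m-(r-1),t+1)$'' is a slightly tidier presentation of exactly the paper's argument.
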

\begin{proof}
    We prove this equality by induction on $r$. 
    For the base case, let $r = 1$. 
    By considering all possible choices for $Y$ on the left hand side (LHS) of (\ref{eq:thelemma}), namely $Y = \emptyset$ or $\{0\}$, we see the LHS of (\ref{eq:thelemma}) is equal to $Q(m,t) + Q(m,t+1)$.
    
    Similarly, by considering all possible choices for $Y$ on the right hand side (RHS), namely only $Y = \emptyset$, we get the RHS of (\ref{eq:thelemma}) is equal to $Q(m + t + 1,t+1).$
    
    By using the equality $Q(n,k) = Q(n-k,k-1) + Q(n-k,k)$ from Lemma~\ref{lem:FolklorePartitions}, we see that these are equal. Thus, we have shown the base $r=1$ case.
    
    Now assume the result holds for all $l < r$ for some $r > 1$ -- we show our result for $r$. Note that, by splitting subsets in the LHS of (\ref{eq:thelemma}) into those that do and do not contain $r-1$, we see  it is equal to the following:
    \[
        \sum_{Y \subseteq[r-2]} Q\left( m + 1 - r - \sum_{i \in Y} i, |Y| + t +1\right) + \sum_{Y \subseteq[r-2]} Q\left( m - \sum_{i \in Y} i, |Y| + t \right).\]
    Similarly, splitting the subsets in the RHS of (\ref{eq:thelemma}) into those that do and do not contain $r-2$, we see that it is equal to the following:
    \begin{multline*}
        \sum_{Y \subseteq[r-3]} Q\left( m + t + 3 - r  - \sum_{i \in Y} i, |Y| + t +2\right)+ \\
        \sum_{Y \subseteq[r-3]} Q\left( m + t + 1 - \sum_{i \in Y} i, |Y| + t + 1\right) .
    \end{multline*}
    Finally, note that the first terms and the second terms of the two sums are equal by induction. Thus, we are done.
\end{proof}

Let $T \in \flad(a)$ have trunk vertices $(v_0,\dots,v_k)$. Our arguments will require counting the following set of trees -- for $n,k,l \in \mathbb{N}_0$ with $l \leq k < n$, we define
\[
S_L(n,k,l) = \left\{ T \in \flad(a)\ : \begin{array}{ll}
    &T \text{ has }n\text{ edges, }k \text{ trunk edges, and} \\
    &l = \min_{0 \leq i \leq k}\{i : \textrm{there exists a branch at }v_i\}
  \end{array}\right\}
  \]

\begin{example}
    Figure \ref{fig:sl621} shows the two $a$-trees in $S_L(6,2,1)$.

    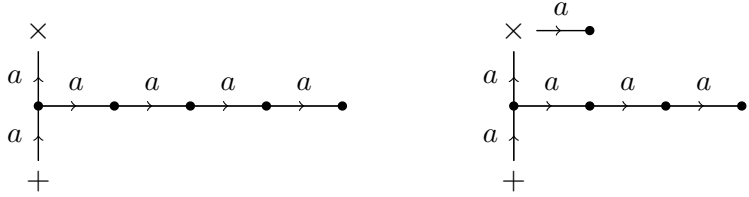
\begin{figure}[h]
        \centering
        \begin{tikzpicture}
            \setup

            \node () at ( 0,0) {};
            
            \node (0) at ( 0,0.5) {\large$+$};
            \Vertex[x=0,y=1.5]{1}
            \node (2) at ( 0,2.5) {\large$\times$};

            \Vertex[x=1,y=1.5]{L1}
            \Vertex[x=2,y=1.5]{L2}
            \Vertex[x=3,y=1.5]{L3}
            \Vertex[x=4,y=1.5]{L4}

            \Edge(0)(1)\draw (0) -- (1) node [midway, left=2pt] {$a$};
            \Edge(1)(2)\draw (1) -- (2) node [midway, left=2pt] {$a$};
            \Edge(1)(L1)\draw (1) -- (L1) node [midway, above=2pt] {$a$};
            \Edge(L1)(L2)\draw (L1) -- (L2) node [midway, above=2pt] {$a$};
            \Edge(L2)(L3)\draw (L2) -- (L3) node [midway, above=2pt] {$a$};
            \Edge(L3)(L4)\draw (L3) -- (L4) node [midway, above=2pt] {$a$};

        \end{tikzpicture}
        \hspace{0.1\textwidth}
        \begin{tikzpicture}
            \setup

            \node () at ( 0,0) {};
            
            \node (0) at ( 0,0.5) {\large$+$};
            \Vertex[x=0,y=1.5]{1}
            \node (2) at ( 0,2.5) {\large$\times$};

            \Vertex[x=1,y=1.5]{L1}
            \Vertex[x=2,y=1.5]{L2}
            \Vertex[x=3,y=1.5]{L3}
            
            \Vertex[x=1,y=2.5]{U1}
            
            \Edge(0)(1)\draw (0) -- (1) node [midway, left=2pt] {$a$};
            \Edge(1)(2)\draw (1) -- (2) node [midway, left=2pt] {$a$};
            \Edge(1)(L1)\draw (1) -- (L1) node [midway, above=2pt] {$a$};
            \Edge(L1)(L2)\draw (L1) -- (L2) node [midway, above=2pt] {$a$};
            \Edge(L2)(L3)\draw (L2) -- (L3) node [midway, above=2pt] {$a$};
            \Edge(2)(U1)\draw (2) -- (U1) node [midway, above=2pt] {$a$};

        \end{tikzpicture}
        \caption{The trees of $S_L(6,2,1)$.}
        \label{fig:sl621}
    \end{figure} 
    
\end{example}

\begin{lemma}\label{lem:BnMinusrMinus1}
    For $0 \leq l \leq k < n$, $|S_L(n,k,l)| = |S_L(n-k-1,k-l)|$.
\end{lemma}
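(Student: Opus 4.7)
The plan is to compute $|S_L(n,k,l)|$ via a sum formula adapted from Proposition \ref{prop:SumFormForTrees}, and then identify it with $|S_L(n-k-1,k-l)|$ using Lemma \ref{lem:SumsWithT}.

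First I would establish the analogue of Proposition \ref{prop:SumFormForTrees} for $|S_L(n,k,l)|$. For a retract-free left $a$-tree $T \in S_L(n,k,l)$, the branching-index set $Y \subseteq [k]$ from Proposition \ref{prop:SumFormForTrees} must satisfy $k - l \in Y$ (a branch exists at $v_l$) and $Y \cap \{k-l+1,\dots,k\} = \emptyset$ (no branches occur at $v_j$ for $j < l$), so $Y = Y' \cup \{k-l\}$ with $Y' \subseteq [k-l-1]$. Re-running the bijection $\phi$ of Proposition \ref{prop:SumFormForTrees} with the extra contribution of $k-l$ split off yields
\[
|S_L(n,k,l)| = \sum_{Y' \subseteq [k-l-1]} Q\!\left(n - 2k + l - \sum_{i \in Y'} i,\, |Y'| + 1\right).
\]
A direct application of Proposition \ref{prop:SumFormForTrees} to the right-hand side of the claim gives
\[
|S_L(n-k-1, k-l)| = \sum_{Y'' \subseteq [k-l]} Q\!\left(n - 2k + l - 1 - \sum_{i \in Y''} i,\, |Y''|\right).
\]

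Finally, I would invoke Lemma \ref{lem:SumsWithT} with $r = k-l+1$, $t = 0$, and $m = n - 2k + l - 1$. Under this substitution, the left-hand side of the lemma is exactly the sum for $|S_L(n-k-1,k-l)|$ and its right-hand side is exactly the sum for $|S_L(n,k,l)|$, delivering the claimed equality.

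I expect the main obstacle to be the first step: isolating the refined sum formula for $|S_L(n,k,l)|$ in a form that matches Lemma \ref{lem:SumsWithT}. Once this is done, matching parameters is routine, since Lemma \ref{lem:SumsWithT} is precisely tailored for this sort of index shift. Some care is needed with boundary cases such as $l = k$ (where $[k-l-1] = [-1] = \emptyset$ under the convention adopted in Lemma \ref{lem:SumsWithT}) and $l = 0$; in both, the single-term or full sums collapse to the expected values, so the argument goes through uniformly.
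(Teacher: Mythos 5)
Your proof is correct, and it reaches the result by a somewhat different route than the paper. The paper proceeds by induction on $n+k+l$: for $l>0$ it uses the geometric bijection that erases the first $l$ trunk edges to reduce to $|S_L(n-l,k-l,0)|$, and only in the $l=0$ case does it invoke the per-$Y$ counts from the proof of Proposition~\ref{prop:SumFormForTrees} (restricting to $Y \ni k$) together with Lemma~\ref{lem:SumsWithT} at $m=n-2k-1$, $r=k+1$, $t=0$. You instead observe that fixing $l$ amounts to fixing $\max(Y)=k-l$ in the partition of $S_L(n,k)$ into the sets $\mathcal{T}_Y$, which gives the refined formula $|S_L(n,k,l)| = \sum_{Y' \subseteq [k-l-1]} Q\bigl(n-2k+l-\sum_{i\in Y'} i, |Y'|+1\bigr)$ for every $l$ at once, and then a single application of Lemma~\ref{lem:SumsWithT} with $r=k-l+1$, $m=n-2k+l-1$, $t=0$ finishes the proof -- no induction and no trunk-erasing bijection needed. (When $l=0$ your parameters specialise exactly to the paper's.) What each buys: your argument is shorter and uniform in $l$, at the cost of re-entering the proof of Proposition~\ref{prop:SumFormForTrees} to extract the count $|\mathcal{T}_Y|=Q(\cdot,\cdot)$ for the restricted family of $Y$'s -- but the paper's $l=0$ step needs exactly the same refinement, so this is not an extra burden; the paper's route keeps the sum manipulation confined to $l=0$ and replaces the rest by a transparent geometric reduction. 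The only small caveats in your write-up -- applying Lemma~\ref{lem:SumsWithT} with $t=0$ and with possibly negative $m$, and the boundary case $[k-l-1]=[-1]=\emptyset$ when $l=k$ -- are shared with the paper's own use of that lemma and are handled by the stated conventions, so they do not constitute gaps.
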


\begin{proof}
    We perform induction on $n + k + l$. As $l \leq k < n$, our base case is given by $n + k + l = 1$, occurring exactly when $n = 1$ and $k = l = 0$. In particular note that $|S_L(1,0,0)| = 1$ and $|S_L(1-0-1,0-0)| = |S_L(0,0)| = 1$ and so our base case holds.

    Now suppose that there exists some positive integer $N$ such that for all integers $0 \leq c \leq b < a$ with $a + b + c < N$, we have $|S_L(a,b,c)| = |S_L(a-b-1,b-c)|$. We show our result for integers $0 \leq l \leq k < n$ with $n + k + l = N$.

    First, suppose $l \neq 0$. One may obtain a bijection $S_L(n,k,l) \to S_L(n-l,k-l,0)$ by erasing the first $l$ trunk edges of a given tree and moving the start vertex to the $l^{\text{th}}$ trunk vertex. Thus $|S_L(n,k,l)| = |S_L(n-l,k-l,0)|$. Certainly we have $(n-l) + (k-l) + 0 = n + k -2l < N$ and thus we apply induction to see that $|S_L(n-l,k-l,0)| = |S_L((n-l)-(k-l)-1,(k-l)-0)| = |S_L(n-k-1,k-l)|$, and the result follows.

    Now instead suppose $l = 0$ -- we aim to show that $|S_L(n,k,0)| = |S_L(n-k-1,k)|$. By Proposition \ref{prop:SumFormForTrees}, note that
    \[|S_L(n-k-1,k)| = \sum_{Y \subseteq [k]} Q\left( n -2k-1- \sum_{i \in Y} i, |Y| \right)\]\[|S_L(n,k,0)| = \sum_{\substack{Y \subseteq [k] \\ k \in Y}} Q\left( n -k- \sum_{i \in Y} i, |Y| \right) = \sum_{Y \subseteq [k-1]} Q\left( n - 2k- \sum_{i \in Y} i, |Y| + 1 \right)\]By applying Lemma \ref{lem:SumsWithT} with $m = n-2k-1$, $r = k+1$ and $t = 0$, we exactly have that $|S_L(n-k-1,k)| = |S_L(n,k,0)|$ as required.    
\end{proof}

\begin{theorem}\label{thm:treeswithktrunk}
    For $0 \leq k \leq n$, $|S_L(n,k)| = P(n+1,k+1)$.
\end{theorem}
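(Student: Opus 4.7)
The plan is to induct on $n$, using Lemma \ref{lem:BnMinusrMinus1} and the partition recursion $P(n+1,k+1) = P(n,k) + P(n-k,k+1)$ from Lemma \ref{lem:FolklorePartitions}. The base case $n = 0$ forces $k = 0$ and both sides are $1$. The extreme cases within the inductive step are handled directly: for $k = 0$, Proposition \ref{prop:idemsofmono} gives $|S_L(n,0)| = 1 = P(n+1,1)$, since the unique tree is $(a^n)^+$; and for $k = n$ the unique element of $S_L(n,n)$ is the trunk of $n$ edges with no branches, matching $P(n+1,n+1) = 1$.

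For $0 < k < n$, I partition $S_L(n,k)$ according to whether the start vertex $v_0$ carries a branch, writing $|S_L(n,k)| = |S_L(n,k,0)| + \sum_{l=1}^{k} |S_L(n,k,l)|$. Lemma \ref{lem:BnMinusrMinus1} identifies the first summand with $|S_L(n-k-1,k)|$, which equals $P(n-k,k+1)$ by the inductive hypothesis (with both quantities vanishing when $k > n-k-1$). For the second summand, erasing the first trunk edge and promoting $v_1$ to the new start vertex defines a bijection $S_L(n,k,l) \to S_L(n-1,k-1,l-1)$ for each $l \geq 1$; since $k - 1 < n - 1$, every tree in $S_L(n-1,k-1)$ has at least one branch, so the sum recovers all of $S_L(n-1,k-1)$, whose cardinality is $P(n,k)$ by induction. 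Adding the two contributions and invoking the partition recursion gives $P(n+1,k+1)$.

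The main obstacle is a careful verification of the first-edge-removal bijection: one must check that the new start vertex (formerly $v_1$) carries a branch precisely when the original $l$ equals $1$, so that the first-branch position shifts correctly from $l$ to $l - 1$, and that the inverse map prepending a fresh branch-free trunk edge lands in $S_L(n,k,l)$. A minor auxiliary check is that the vanishing $|S_L(n - k - 1, k)| = 0$ for $k > n - k - 1$ matches $P(n - k, k + 1) = 0$ in the same range, so that the induction closes uniformly at the boundary $2k \geq n$.
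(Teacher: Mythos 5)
Your proof is correct, and it reuses the paper's two key ingredients -- Lemma \ref{lem:BnMinusrMinus1} and the recursions of Lemma \ref{lem:FolklorePartitions} -- but organises them differently. The paper decomposes $S_L(n,k)$ over \emph{all} first-branch positions $l \in \{0,\dots,k\}$, applies Lemma \ref{lem:BnMinusrMinus1} to every summand, and closes an induction on $n+k$ via the telescoped identity $P(n+1,k+1)=\sum_{i=1}^{k+1}P(n-k,i)$; you split only into ``branch at the start vertex'' ($l=0$, handled by Lemma \ref{lem:BnMinusrMinus1}) versus ``no branch at the start'' ($l\geq 1$, handled by deleting the first trunk edge), and close an induction on $n$ with the two-term recursion $P(n+1,k+1)=P(n,k)+P(n-k,k+1)$. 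Your extra bijection $S_L(n,k,l)\to S_L(n-1,k-1,l-1)$ is sound: when $l\geq 1$ the start vertex carries only its single outgoing trunk edge, so deleting or prepending that edge disturbs neither retract-freeness nor the branch positions (it is in fact the one-step version of the edge-erasing map used inside the paper's proof of Lemma \ref{lem:BnMinusrMinus1}); alternatively, you could bypass the verification entirely by applying Lemma \ref{lem:BnMinusrMinus1} twice, since $|S_L(n,k,l)|=|S_L(n-k-1,k-l)|=|S_L(n-1,k-1,l-1)|$ whenever $1\le l\le k<n$. Your boundary check is also fine: for $2k\geq n$ a retract-free branch at the start vertex would have to be strictly longer than the trunk, so $S_L(n,k,0)=\emptyset$, matching $P(n-k,k+1)=0$ since $n-k$ admits no partition into more than $n-k$ parts. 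So the argument closes; the difference from the paper is essentially one of bookkeeping (which recursion and which induction parameter), not of substance.
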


\begin{proof}
    We begin by noting that certainly $|S_L(n,0)| = 1 = P(n+1,1)$ and certainly $|S_L(n,n)| = 1 = P(n+1,n+1)$. Thus we may suppose $1 \leq k < n$.

    We proceed by induction on $n + k$. Note that our base case is given by $k=1$ and $n=2$, in which case certainly $|S_L(n,k)| = 1 = P(n+1,k+1)$.

    Now let $n,k$ be given such that $1 \leq k < n$ and $n+k \geq 4$, and suppose that $|S_L(a,b)| = P(a+1,b+1)$ for all $a + b < n + k$. Note that \[P(n+1,k+1) = \sum_{i=1}^{k+1} P(n-k,i) = \sum_{l=0}^{k} |S_L(n-k-1,l)| = \sum_{l=0}^{k} |S_L(n-k-1,k-l)|\]where the first equality follows from repeated applications of Lemma \ref{lem:FolklorePartitions}, the second from inductive assumption, and the third from summation manipulation. Hence by Lemma \ref{lem:BnMinusrMinus1}, we have \[P(n+1,k+1) = \sum_{l=0}^{k} |S_L(n,k,l)| = |S_L(n,k)|.\qedhere\]
\end{proof}

\begin{corollary}
    For all $n \in \N_0$, $|S_L(n)| = P(n+1)$ in $\flad(a)$.
\end{corollary}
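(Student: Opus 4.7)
The plan is to partition $S_L(n)$ according to the number of trunk edges of each tree and then invoke Theorem~\ref{thm:treeswithktrunk}. By Corollary~\ref{cor:sphereisedges}, $S_L(n)$ consists precisely of those trees $T \in \flad(a)$ with exactly $n$ edges. Any such tree has some number $k$ of trunk edges with $0 \leq k \leq n$ (noting that $k \leq n$ since trunk edges are edges), and the sets $S_L(n,k)$ are pairwise disjoint and cover $S_L(n)$. Hence
\[
    |S_L(n)| = \sum_{k=0}^{n} |S_L(n,k)|.
\]

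Next, I would apply Theorem~\ref{thm:treeswithktrunk} term-by-term to rewrite this as
\[
    |S_L(n)| = \sum_{k=0}^{n} P(n+1, k+1) = \sum_{j=1}^{n+1} P(n+1, j),
\]
via the reindexing $j = k+1$. Since every partition of the positive integer $n+1$ has between $1$ and $n+1$ parts, the sum $\sum_{j=1}^{n+1} P(n+1,j)$ is exactly $P(n+1)$, giving $|S_L(n)| = P(n+1)$.

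There is essentially no obstacle here: the result is a direct consequence of Theorem~\ref{thm:treeswithktrunk} together with the elementary identity $P(m) = \sum_{j=1}^{m} P(m,j)$ for any positive integer $m$. The only small check is that the sum begins at $k=0$ (the empty-trunk case, corresponding to partitions with a single part) and ends at $k=n$ (the straight-path case, corresponding to partitions of $n+1$ into $n+1$ parts of size $1$), which matches the full range of part-counts for partitions of $n+1$.
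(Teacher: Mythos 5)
Your proof is correct and follows exactly the same route as the paper: partition $S_L(n)$ by number of trunk edges, apply Theorem~\ref{thm:treeswithktrunk} to each $S_L(n,k)$, and sum using $P(m)=\sum_{j=1}^{m}P(m,j)$. The paper just writes the chain of equalities more tersely.
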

\begin{proof}
    By Corollary \ref{cor:sphereisedges} and Theorem \ref{thm:treeswithktrunk}, we observe that
    \[ |S_L(n)| = \sum_{k=0}^{n} |S_L(n,k)| = \sum_{k=1}^{n+1} P(n+1,k) = P(n+1). \qedhere\]
\end{proof}

\begin{corollary}
    The monogenic free left adequate monoid has intermediate growth.
\end{corollary}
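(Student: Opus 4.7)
The plan is to combine the explicit count $|S_L(n)| = P(n+1)$, established in the preceding corollary, with the classical Hardy--Ramanujan asymptotic for the partition function. Recall that Hardy and Ramanujan showed
\[
P(n) \sim \frac{1}{4n\sqrt{3}}\, e^{\pi\sqrt{2n/3}} \qquad \text{as } n \to \infty,
\]
so $P(n)$ grows faster than any polynomial in $n$ but slower than any function of the form $c^n$ for $c>1$, since $\pi\sqrt{2n/3} = o(n)$.

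Next, I would pass from spheres to balls, since growth is measured via the ball function. We have
\[
|B_L(n)| \;=\; \sum_{k=0}^{n} |S_L(k)| \;=\; \sum_{k=0}^{n} P(k+1),
\]
and this sum is sandwiched between $P(n+1)$ and $(n+1)P(n+1)$. Both bounds inherit super-polynomial, sub-exponential growth from the Hardy--Ramanujan estimate, so $|B_L(n)|$ does too. Hence $\flad(a)$ has intermediate growth.

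I do not expect any serious obstacle here: all the combinatorial work has been done in Theorem~\ref{thm:treeswithktrunk} and the preceding corollary, and the asymptotic behaviour of $P(n)$ is a well-known classical result that can simply be cited. The only mildly delicate point is making sure the reader sees that growth is defined on balls rather than spheres, and that summing the partition function does not affect the qualitative growth class; this is handled by the simple two-sided bound above.
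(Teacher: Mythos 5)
Your proposal is correct and follows essentially the same route as the paper: the paper's proof simply invokes the Hardy--Ramanujan asymptotic for $P(n)$ together with the preceding corollary $|S_L(n)| = P(n+1)$ to conclude intermediate growth. Your additional remarks on passing from spheres to balls via the two-sided bound are a harmless (and slightly more careful) elaboration of the same argument.
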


\begin{proof}
    By a famous result of Hardy and Ramanujan \cite{HAR}, \[P(n) \sim \frac{1}{4n\sqrt{3}} \exp\left(\pi\sqrt{\frac{2n}{3}}\right).\qedhere \]
\end{proof}

\subsection{Growth of the monogenic free adequate monoid}
We now study the growth rate of $\fad(a)$, appealing to Corollary \ref{cor:sphereisedges}. It is clear that the number of trees in $\fad(a)$ grows much faster than in the one-sided case -- in fact, we will show that even the semilattice of idempotents grows exponentially. 

For $n \in \N_0$, recall that the sphere of radius $n$ in $\fad(a)$ is denoted $S(n)$. We let $S_E(n) := S(n) \cap E(\fad(a))$, that is the set of idempotents of $\fad(a)$ with exactly $n$ edges. By definition, $|S_E(n)| < |S(n)|$.

We say $T \in \ut^1(a)$ is a \emph{zig-zag tree} if the undirected graph of $T$ is a single (non-branching) path with the start and end vertex being located at a common end. We say a zig-zag tree has \emph{height} $i$ if $T$ has exactly $i$ edges pointing away from the start vertex.

\begin{example}
    Figure \ref{fig:zigzags} shows three zig-zag trees of heights $3$, $3$, and $2$ respectively (left to right). Only the rightmost tree is retract-free.

    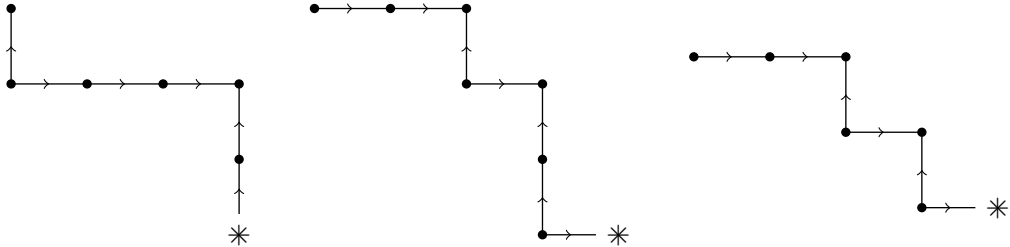
\begin{figure}[h!]
    \centering
    {\begin{tikzpicture}
        \setup
        
        \node (A) at ( 0,0) {\large$\PlusCross$};
        \Vertex[x=0,y=1]{1};
        \Vertex[x=0,y=2]{2};
        \Vertex[x=-1,y=2]{3};
        \Vertex[x=-2,y=2]{4};
        \Vertex[x=-3,y=2]{5};
        \Vertex[x=-3,y=3]{6};

        \Edge(A)(1);
        \Edge(1)(2);
        \Edge(3)(2);
        \Edge(4)(3);
        \Edge(5)(4);
        \Edge(5)(6);
        
    \end{tikzpicture}
    \hspace{1em}
    \begin{tikzpicture}
        \setup
        
        \node (A) at ( 0,0) {\large$\PlusCross$};
        \Vertex[x=-1,y=0]{1};
        \Vertex[x=-1,y=1]{2};
        \Vertex[x=-1,y=2]{3}
        \Vertex[x=-2,y=2]{4};        
        \Vertex[x=-2,y=3]{5};
        \Vertex[x=-3,y=3]{6};
        \Vertex[x=-4,y=3]{7};

        \Edge(1)(A);
        \Edge(1)(2);
        \Edge(2)(3);
        \Edge(4)(3);
        \Edge(4)(5);        
        \Edge(6)(5);
        \Edge(7)(6);
        
    \end{tikzpicture}
    \hspace{1em}
    \begin{tikzpicture}
        \setup

        \node (_) at ( 0,0) {}; 
                
        \node (A) at ( 0,0.5) {\large$\PlusCross$};
        \Vertex[x=-1,y=0.5]{1};
        \Vertex[x=-1,y=1.5]{2};
        \Vertex[x=-2,y=1.5]{3};
        \Vertex[x=-2,y=2.5]{4};
        \Vertex[x=-3,y=2.5]{5};
        \Vertex[x=-4,y=2.5]{6};

        \Edge(1)(A);
        \Edge(1)(2);
        \Edge(3)(2);
        \Edge(3)(4);
        \Edge(5)(4);
        \Edge(6)(5);
        
    \end{tikzpicture}
    }
    \caption{Three zig-zag trees with $a$-edge labels omitted.}
    \label{fig:zigzags}
    \end{figure}
        
\end{example}

Fix some integer $n$ and some integer $i < n/2$. We define the tree $p_{n,i}$ as follows: $p_{n,i}$ is the zig-zag path with $n-2i-1$ edges oriented towards the start, followed by $2i$ edges with alternating orientations beginning with an edge facing away from the start, followed by one final edge oriented towards the start. Note that $p_{n,i}$ has exactly $n$ edges and height $i$. The zig-zag tree $p_{n,3}$ is shown in Figure \ref{fig:p}.

\begin{figure}[h]
    \centering
    \begin{tikzpicture}
        \setup
        \node (A) at ( 4,0) {\large$\PlusCross$};
        \Vertex[x=0,y=0]{G0}
        \Vertex[x=1,y=0]{G1}
        \Vertex[x=2,y=0]{G2}
        \Vertex[x=3,y=0]{G3}
        \Vertex[x=0,y=1]{U1}
        \Vertex[x=-1,y=1]{U}
        \Vertex[x=-1,y=2]{L2}
        \Vertex[x=-2,y=2]{L3}
        \Vertex[x=-2,y=3]{L4}
        \Vertex[x=-3,y=3]{L5}
        \Vertex[x=-4,y=3]{L6}

        \node (L) at ( -4.2,3.2) {};
        \node (R) at ( 4.3,3.2) {};
        \node (S) at ( 4.3,-0.2) {};

        \draw [decorate, decoration={brace, amplitude=10pt}, thick]    (L) -- (R) node[midway, yshift=20pt] {$n-3$};
        \draw [decorate, decoration={brace, amplitude=10pt}, thick]    (R) -- (S) node[midway, xshift=20pt] {$3$};
        
        \Edge(G3)(A)\draw (G3) -- (A);
        \node (dots) at ( 2.5, 0) {$\cdots$};
        
        \Edge(G1)(G2)\draw (G1) -- (G2);
        \Edge(G0)(G1)\draw (G0) -- (G1);
        \Edge(G0)(U1)\draw (G0) -- (U1);
        \Edge(U)(U1)\draw (U) -- (U1);
        \Edge(U)(L2)\draw (U) -- (L2);
        \Edge(L3)(L2)\draw (L3) -- (L2);
        \Edge(L3)(L4)\draw (L3) -- (L4);
        \Edge(L5)(L4)\draw (L5) -- (L4);
        \Edge(L6)(L5)\draw (L6) -- (L5);

    \draw[-, red, dashed] (L4) to (2,3);
    \draw[-, red, dashed] (3,3) to (4,3);
    \draw[-, red, dashed] (A) to (4,3);
    \node[red] (dots) at ( 2.5, 3) {$\cdots$};

    \draw[->-, blue, dashed] (L4) to (-1,3);
    \draw[->-, blue, dashed] (-1,2) to (-1,3);
    \draw[->-, blue, dashed] (-1,2) to (1,2);
    \draw[->-, blue, dashed] (1,1) to (1,2);
    \draw[->-, blue, dashed] (1,1) to (2,1);
    \draw[->-, blue, dashed] (3,1) to (4,1);
    \draw[->-, blue, dashed] (4,0) to (4,1);
    \node[blue] (dots) at ( 2.5, 1) {$\cdots$};

    \draw[-, dashed] (-1,0) to (0,0);
    \draw[-, dashed] (-4,3) to (-1,0);
    \end{tikzpicture}
    \caption{The zig-zag tree $p_{n,3}$, shown by the solid black line.}
    \label{fig:p}
\end{figure}
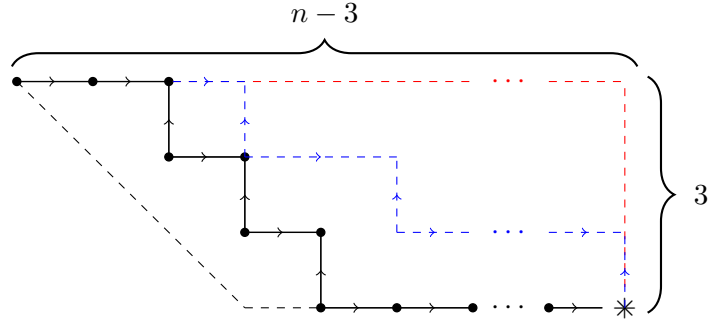 

For zig-zag trees $S,T$ with $n$ edges, we say $T \geq S$ if for all $k \leq n$, the first $k$ edges of $T$ contain at least as many edges pointing away from the start then the first $k$ edges of $S$.
For $n \in \N$ and $i < n/2$, let $Z(n,i)$ be the set of zig-zag trees $T$ with $n$ edges and height $i$ such that $T \geq p_{n,i}$.

\begin{remark}
    If we draw all edges away from the start vertex vertically and all edges towards the start vertex horizontally, as in Figure~\ref{fig:p}, then $T \in Z(n,i)$ if and only if $T$ is entirely contained between the solid black line to the red dashed line in Figure~\ref{fig:p}. For example, the blue dashed line in Figure~\ref{fig:p} is a tree in $Z(n,i)$.
\end{remark}

\begin{lemma} \label{lem:noofzigzags}
    For $n \in \N$ and $0 \leq i < n/2$, we have
    \[ |Z(n,i)| = \frac{n-2i}{n}\begin{pmatrix}
    n \\
    i
\end{pmatrix}.\]
\end{lemma}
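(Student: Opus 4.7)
The plan is to encode each zig-zag tree with $n$ edges as a sign sequence and reduce the count to a classical ballot-theorem calculation. I would identify $T$ with the sequence $\epsilon_1, \ldots, \epsilon_n \in \{\pm 1\}$ of its edge orientations read outward from the start vertex, with $\epsilon_j = +1$ if the $j$-th edge points away from the start and $\epsilon_j = -1$ otherwise. A zig-zag tree of height $i$ then corresponds to such a sequence with exactly $i$ entries equal to $+1$, so there are $\binom{n}{i}$ candidate trees in total. Write $s_T(k) := \sum_{j=1}^{k}\epsilon_j$, and note that $s_T(n) = 2i - n < 0$.

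First, I would translate the condition $T \ge p_{n,i}$ into a clean inequality on $s_T$. A direct computation from the explicit description of $p_{n,i}$ gives
\[
s_{p_{n,i}}(k) = \begin{cases} -k & \text{if } 0 \le k \le n-2i-1,\\ -(n-2i-1) \text{ or } -(n-2i-2) & \text{if } n-2i-1 \le k \le n-1,\\ -(n-2i) & \text{if } k = n,\end{cases}
\]
where the two values in the middle range alternate with $k$. Since $s_T(k) \ge -k$ is automatic and $s_T(k) \equiv k \pmod 2$, a short parity argument shows that the condition $s_T(k) \ge s_{p_{n,i}}(k)$ for all $k$ collapses to the single strict inequality
\[
s_T(k) > s_T(n) \text{ for all } 0 \le k \le n-1.
\]
That is, $T \in Z(n,i)$ if and only if the walk $s_T$ attains its minimum uniquely at the endpoint $k=n$.

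Second, I would apply the classical ballot theorem via the reverse-and-negate bijection $\epsilon_j \mapsto \tilde{\epsilon}_j := -\epsilon_{n+1-j}$. A short computation gives $\sum_{j=1}^{m} \tilde{\epsilon}_j = s_T(n-m) - s_T(n)$, so the condition above translates to the strict ballot condition that every non-empty partial sum of $(\tilde{\epsilon}_j)$ is positive. Since $(\tilde{\epsilon}_j)$ has $n-i$ entries equal to $+1$ and $i$ entries equal to $-1$, summing to $n-2i>0$, the classical ballot theorem yields exactly $\frac{(n-i)-i}{n}\binom{n}{i} = \frac{n-2i}{n}\binom{n}{i}$ such sequences; bijectivity of $\epsilon \mapsto \tilde\epsilon$ then gives the claimed formula for $|Z(n,i)|$.

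The main obstacle will be the first step: the oscillation of $s_{p_{n,i}}$ between $-(n-2i-1)$ and $-(n-2i-2)$ in the middle range makes the condition $s_T \ge s_{p_{n,i}}$ look two-tiered, and the key insight is that $s_T(k)$ and $k$ share parity, which forces the bound $s_T(k) > s_T(n)$ to encode both cases of the oscillating lower bound simultaneously. Once that reduction is in hand, the count is a standard application of Bertrand's ballot theorem (equivalently, the Dvoretzky--Motzkin cycle lemma).
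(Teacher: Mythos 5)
Your proposal is correct and takes essentially the same route as the paper: both count the $\binom{n}{i}$ zig-zag trees of height $i$, translate $T \geq p_{n,i}$ into the condition that the orientation walk stays strictly above its terminal value $-(n-2i)$ until the final vertex, and invoke Bertrand's ballot theorem with $n-i$ versus $i$ votes. Your parity argument and reverse-and-negate bijection merely make explicit the reduction that the paper asserts geometrically via Figure \ref{fig:p}.
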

\begin{proof}
    Note that there are $\begin{pmatrix}
        n \\ i
    \end{pmatrix}$ zig-zag trees in total with $n$ edges and height $i$.
    Note that a zig-zag tree $T$ has $T \geq p_{n,i}$ if and only if $T$ remains strictly above the diagonal dashed line in Figure \ref{fig:p} everywhere except one vertex (the vertex furthest from the start). The problem of counting such trees is exactly the \textit{Ballot Problem} \cite{AND,BER}, where the ``winning" candidate receives $n-i$ votes and the ``losing" candidate receives $i$ votes. The problem is widely known to have the solution\[\frac{n-2i}{n}\begin{pmatrix}
    n \\
    i
\end{pmatrix}.\qedhere\]
\end{proof}

\begin{proposition} \label{prop:CountingZigZags}
For all $n \in \N$, and $0 \leq k < n/2$. Then,
    \[ \sum_{i=0}^k|Z(n,i)| = \begin{pmatrix}
    n-1 \\
    k
\end{pmatrix}.\]
\end{proposition}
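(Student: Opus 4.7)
The plan is to proceed by induction on $k$, using the closed form for $|Z(n,i)|$ provided by Lemma~\ref{lem:noofzigzags} together with an elementary binomial identity.

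For the base case $k = 0$, the sum collapses to $|Z(n,0)|$. A zig-zag tree $T$ of height $0$ has all $n$ edges oriented towards the start, and such a $T$ trivially satisfies $T \geq p_{n,0}$ (since $p_{n,0}$ is itself this unique tree), so $|Z(n,0)| = 1 = \binom{n-1}{0}$.

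For the inductive step, I would assume the formula holds for $k-1$ (with $k < n/2$), so that
\[
\sum_{i=0}^{k} |Z(n,i)| = \binom{n-1}{k-1} + |Z(n,k)| = \binom{n-1}{k-1} + \frac{n-2k}{n}\binom{n}{k},
\]
using Lemma~\ref{lem:noofzigzags} at the last step. It then remains to verify the purely combinatorial identity
\[
\binom{n-1}{k-1} + \frac{n-2k}{n}\binom{n}{k} = \binom{n-1}{k}.
\]
This follows routinely: using $\binom{n-1}{k-1} = \frac{k}{n}\binom{n}{k}$, the left-hand side becomes $\frac{n-k}{n}\binom{n}{k} = \binom{n-1}{k}$, as desired.

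I do not anticipate any real obstacles here; the result is essentially a bookkeeping consequence of Lemma~\ref{lem:noofzigzags} combined with a standard binomial manipulation. Alternatively, one could give a bijective proof by identifying $\bigsqcup_{i=0}^{k} Z(n,i)$ with a natural set of $k$-subsets of $[n-1]$ via the ballot-type encoding, but the inductive argument above is by far the most direct route and avoids rewriting the ballot-problem bijection used in Lemma~\ref{lem:noofzigzags}.
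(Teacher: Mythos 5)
Your proof is correct and takes essentially the same route as the paper: both rest on Lemma~\ref{lem:noofzigzags} together with the identity $\frac{n-2k}{n}\binom{n}{k} = \binom{n-1}{k} - \binom{n-1}{k-1}$, which the paper packages as a telescoping sum and you package as induction on $k$. No gaps.
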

\begin{proof}
    Note that, $\frac{n-2i}{n}\binom{n}{i} = \binom{n}{i} - 2\binom{n-1}{i-1}$. Then, as $\binom{n}{i} = \binom{n-1}{i} + \binom{n-1}{i-1}$, we obtain that
    \[ \sum_{i=0}^k|Z(n,i)| = \sum_{i=0}^k \binom{n}{i} -2\binom{n-1}{i-1}  = \sum_{i=0}^k \binom{n-1}{i} -\binom{n-1}{i-1}  = \binom{n-1}{k}. \qedhere \]
\end{proof}

\begin{lemma}\label{lem:dvaluepreserved}
    For a zig-zag tree $T$ and $v$ a vertex of $T$, let $d(v)$ be the number of edges between the start vertex and $v$ which are oriented towards the start, minus the number of edges between the start vertex and $v$ which are oriented away from the start. Then, $d(\psi(v)) = d(v)$ for any retraction $\psi\colon T \to T$.
\end{lemma}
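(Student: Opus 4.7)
The plan is to establish a clean local property for $d$ and propagate it across the connected underlying graph of $T$. Specifically, I will show that for every directed edge $u \to w$ of $T$, we have $d(w) = d(u) - 1$. To verify this, index the vertices of $T$ along its underlying (undirected) path as $v_0, v_1, \dots, v_m$, with $v_0$ the common start/end vertex. A short case split on the orientation of the edge between consecutive vertices $v_i$ and $v_{i+1}$ suffices: if the edge points from $v_i$ to $v_{i+1}$ then it contributes $-1$ to $d(v_{i+1})$ and is absent from the path to $v_i$, giving $d(v_{i+1}) = d(v_i) - 1$; if it points from $v_{i+1}$ to $v_i$ then it contributes $+1$ to $d(v_{i+1})$, giving $d(v_i) = d(v_{i+1}) - 1$. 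Either way, the target of the directed edge has $d$-value exactly one less than its source.

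With this in hand, the main step is to consider the function $f(v) := d(\psi(v)) - d(v)$. Because $\psi$ is a directed graph morphism, each directed edge $u \to w$ of $T$ is carried to a directed edge $\psi(u) \to \psi(w)$ of $T$. Applying the local property to both edges yields $d(\psi(w)) - d(\psi(u)) = -1 = d(w) - d(u)$, and rearranging gives $f(w) = f(u)$. Thus $f$ takes the same value at the two endpoints of every edge of $T$, and since the underlying undirected graph of $T$ is a (connected) path, $f$ must be constant on $T$.

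To finish, recall that morphisms of $X$-graphs fix the start vertex, so $\psi(v_0) = v_0$, and by definition $d(v_0) = 0$ since the path from $v_0$ to itself is empty; hence $f(v_0) = 0$. The constancy of $f$ then forces $f \equiv 0$, which is the desired identity $d(\psi(v)) = d(v)$. I do not anticipate any serious obstacle here: the case analysis for the local property is routine, and the ``constant function on a connected graph pinned at a fixed point'' argument is entirely standard.
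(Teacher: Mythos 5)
Your proof is correct, but it takes a genuinely different route from the paper's. The paper proves the lemma by induction on the number of edges of $T$: it restricts $\psi$ to the length-$(n-1)$ subtree $S$ containing the start vertex, applies the inductive hypothesis there, and then handles the one remaining vertex $u \notin S$ by a short case split on the orientation of the final edge. Your argument instead isolates the local fact that $d$ decreases by exactly $1$ along every directed edge, observes that since $\psi$ is a directed graph morphism this forces $f := d \circ \psi - d$ to be constant along edges and hence constant on the connected underlying path, and then pins $f$ to zero at the start vertex. This potential-function style argument is arguably cleaner: it replaces the induction with a single direct computation, and it sidesteps the (implicit) verification in the paper that $\psi$ actually restricts to $S$, which itself relies on $\psi$ fixing the start vertex.

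One small remark on the final step. You invoke the fact that morphisms of $X$-graphs fix the start vertex, but the paper's stated definition of a \emph{retraction} is merely an idempotent directed graph endomorphism, and is not literally declared to be an $X$-graph morphism (the convention in the surrounding literature, and the paper's later use in the proof of Lemma~\ref{lem:ZigZagRetractFree}, confirms that start/end are indeed fixed, so nothing is actually wrong). If you wanted to be independent of that convention, your argument closes just as well using idempotency alone: since $\psi \circ \psi = \psi$, the vertex $\psi(v_0)$ is fixed by $\psi$, so $f(\psi(v_0)) = d(\psi(\psi(v_0))) - d(\psi(v_0)) = 0$, and constancy of $f$ on the connected path again forces $f \equiv 0$. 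This makes your proof marginally more robust than the paper's, whose restriction-to-subtree step genuinely requires $\psi(v_0) = v_0$.
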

\begin{proof}
    Let $T$ be a zig-zag tree with $n$ edges. The only tree with $n = 0$ edges is the trivial tree and thus our statement holds. Now suppose, for induction, that all zig-zag trees of length $k < n$ for some $n \in \N$ fulfil our statement.
    
    Let $T$ be a zig-zag tree with $n$ edges and let $\psi \colon T \to T$ be a retraction. Let $S$ be the length $n-1$ subtree containing the start vertex.
    
    As $S$ is a subtree of $T$ containing the start vertex, $\psi$ restricts to a retraction of $S$, and hence, by induction, $d(\psi(v)) = d(v)$ for all vertices $v$ of $S$. It only remains to show the result for the unique vertex in $T$ and not $S$. Let $u$ be the vertex of $T$ not contained in $S$, $v$ be the final vertex of $S$, and $e$ be the unique edge between them. We show the result if $e$ is oriented towards $v$, with the other orientation dual.
    
    If $e$ is from $u$ to $v$, then $\psi(u)$ either has one more edge oriented towards the start than $\psi(v)$, or has one less edge oriented away from the start. In either case, 
    \[d(u) = d(v) + 1 = d(\psi(v)) + 1 = d(\psi(u))\]and the result holds.
\end{proof}

\begin{lemma} \label{lem:ZigZagRetractFree}
    Let $T \in Z(n,i)$. Then, $T$ is retract-free.
\end{lemma}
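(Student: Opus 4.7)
The plan is to combine Lemma~\ref{lem:dvaluepreserved} with the observation that the leaf vertex $v_n$ (opposite the start vertex along the underlying path) is the unique vertex of $T$ attaining the maximum $d$-value. Any retraction must then fix both endpoints of the path, which together with connectedness will force it to be the identity.

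First I would compute $d(v_n) = (n-i) - i = n - 2i$, using that $T$ has height $i$ and hence exactly $n - i$ towards-edges and $i$ away-edges along its trunk-like path from $v_0$ to $v_n$.

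Next I would translate the assumption $T \geq p_{n,i}$ into a pointwise bound on $d$-values. Writing $a_k := d(v_k)$, the identity
\[ a_k \;=\; k - 2\cdot\#\{\text{away edges among the first } k\} \]
converts the hypothesis ``$T$ has at least as many away edges in every prefix as $p_{n,i}$'' into the pointwise inequality $a_k^T \leq a_k^{p_{n,i}}$. A direct inspection of the three segments comprising $p_{n,i}$ (initial run of towards-edges climbing to $n-2i-1$; alternating middle portion oscillating between $n-2i-2$ and $n-2i-1$; then one final towards-edge taking the walk up to $n-2i$) shows $a_k^{p_{n,i}} \leq n - 2i - 1$ for every $k < n$. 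Combined, $a_k^T < n - 2i$ for every $k < n$, so $v_n$ is the \emph{unique} vertex of $T$ with $d$-value $n-2i$.

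Let $\psi : T \to T$ be any retraction. Lemma~\ref{lem:dvaluepreserved} gives $d(\psi(v_n)) = n - 2i$, so uniqueness forces $\psi(v_n) = v_n$. We also have $\psi(v_0) = v_0$ since $v_0$ is the (common) start and end vertex. Thus the image $\psi(T)$ contains both endpoints of the underlying undirected path. Since $\psi(T)$ is a connected subgraph of $T$ (as a homomorphic image of the connected graph $T$), and the only connected subgraph of the path $T$ containing both of its endpoints is $T$ itself, we deduce $\psi(T) = T$. Idempotency then gives $\psi = \mathrm{id}_T$, so $T$ is retract-free.

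The main obstacle is the bookkeeping translating between away-edge counts and $d$-values, together with the piecewise walk analysis verifying $a_k^{p_{n,i}} \leq n - 2i - 1$ for $k < n$; both are short once set up carefully, but this is the only step that substantively uses the hypothesis $T \geq p_{n,i}$, so it warrants care with the off-by-one behaviour in the alternating middle segment of $p_{n,i}$.
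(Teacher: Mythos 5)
Your proof is correct and follows essentially the same route as the paper: apply Lemma~\ref{lem:dvaluepreserved}, use $T \geq p_{n,i}$ to show the far endpoint is the unique vertex with $d$-value $n-2i$ (hence fixed by any retraction), note the start vertex is fixed, and conclude the retraction is trivial. Your explicit prefix computation $a_k^{p_{n,i}} \leq n-2i-1$ for $k<n$ and the connectedness-plus-idempotency argument at the end merely spell out details the paper leaves implicit.
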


\begin{proof}
    Suppose $T \in Z(n,i)$ and let $\ell$ be the vertex furthest from the start vertex. Suppose $\psi : T \to T$ is a retraction -- we show that $\psi$ is the trivial map.
    
    By Lemma \ref{lem:dvaluepreserved}, $d(\psi(\ell)) = d(\ell) = n-2i$. Vertices with this $d$-value are those which appear on the diagonal dashed line in Figure \ref{fig:p}. Note however that since $T \geq p_{n,i}$, $\ell$ is the unique vertex of $T$ with $d$-value $n-2i$, and thus $\ell = \psi(\ell)$. As the start vertex is also fixed by $\psi$, it follows that all vertices are fixed by $\psi$.
\end{proof}

\begin{theorem}\label{thm:fadgrowth}
    The idempotent growth rate of $\fad(a)$ is exponential of degree at least $2$.
\end{theorem}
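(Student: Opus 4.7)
The plan is to produce, for each $n$, a large family of pairwise distinct idempotents of $\fad(a)$ with exactly $n$ edges, using the zig-zag trees already constructed. Every zig-zag tree has, by definition, coinciding start and end vertices at one end of its underlying path; combining this with Proposition \ref{prop:idems}, every \emph{retract-free} zig-zag tree is an idempotent of $\fad(a)$. By Lemma \ref{lem:ZigZagRetractFree}, each $T \in Z(n,i)$ is retract-free, and by Corollary \ref{cor:sphereisedges}, each such $T$ represents a distinct element of $S_E(n)$. Moreover, trees in $Z(n,i)$ and $Z(n,j)$ with $i \neq j$ give distinct idempotents since height is an isomorphism invariant of the underlying birooted $a$-graph.

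With disjointness in hand, I would then bound $|S_E(n)|$ from below by summing the cardinalities $|Z(n,i)|$ over the full allowed range $0 \leq i < n/2$. Applying Proposition \ref{prop:CountingZigZags} with $k = \lfloor (n-1)/2 \rfloor$ (one checks $k < n/2$ for every $n \geq 1$), this yields
\[
|S_E(n)| \;\geq\; \sum_{i=0}^{\lfloor (n-1)/2 \rfloor} |Z(n,i)| \;=\; \binom{n-1}{\lfloor (n-1)/2 \rfloor}.
\]

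Finally, the central (or near-central) binomial coefficient satisfies $\binom{n-1}{\lfloor (n-1)/2 \rfloor} \sim \frac{2^{n-1}}{\sqrt{\pi(n-1)/2}}$ by Stirling's formula, so
\[
\liminf_{n \to \infty} |S_E(n)|^{1/n} \;\geq\; \liminf_{n \to \infty} \left(\frac{2^{n-1}}{\sqrt{n}}\right)^{1/n} \;=\; 2,
\]
establishing that the idempotent growth rate of $\fad(a)$ is exponential of base at least $2$.

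All the genuinely technical work, namely the retract-freeness lemma, the enumeration via the ballot problem, and the telescoping in Proposition \ref{prop:CountingZigZags}, has already been done. The only place one has to be careful is step one: verifying that distinct isomorphism classes of retract-free zig-zag trees really do descend to distinct elements of $\fad(a)$, which follows from Theorem \ref{thm:flad} together with the uniqueness (up to graph isomorphism) of the retract-free retract $\overline{T}$. Beyond this bookkeeping, the argument is a direct assembly of the preceding lemmas plus a standard Stirling estimate.
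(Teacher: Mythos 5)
Your proposal is correct and follows essentially the same route as the paper: lower-bound $|S_E(n)|$ by the retract-free zig-zag trees via Lemma \ref{lem:ZigZagRetractFree} and Proposition \ref{prop:CountingZigZags} with $k = \lfloor (n-1)/2 \rfloor$, then apply Stirling's estimate to the resulting binomial coefficient. The extra bookkeeping you include (zig-zag trees are idempotents by Proposition \ref{prop:idems}, and distinct retract-free trees give distinct elements of $\fad(a)$) is exactly what the paper leaves implicit, so there is no substantive difference.
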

\begin{proof}
    By Proposition~\ref{prop:CountingZigZags}, Lemma~\ref{lem:ZigZagRetractFree}, and Stirling's approximation, we obtain the following.
\[ S_E(n) \geq\sum_{i=0}^{\floor{\frac{n-1}{2}}} |Z(n,i)| = \begin{pmatrix} n-1 \\ \floor{\frac{n-1}{2}}\end{pmatrix} \sim \frac{2^{n-1}}{\sqrt{(n-1)\pi}}.\] 
Thus, the idempotent growth rate of $\fad(a)$ is exponential of degree at least $2$.
\end{proof}

This lower bound poses two questions: first, of optimality, and secondly, of whether counting idempotents is sufficient to study the growth of $\fad(a)$. Via explicit calculation, one may verify the results in Table \ref{tab:sizeoffad}.


\begin{table}[h!]
\bgroup
\def\arraystretch{1.2} 
\begin{tabular}{r|cccccc}
$n$        & 0 & 1 & 2 & 3  & 4  & 5  \\ \hline
$|S_E(n)|$ & 1 & 2 & 3 & 6  & 11 & 28 \\
$|S(n)|$   & 1 & 3 & 6 & 14 & 29 & 74
\end{tabular}
\vspace{0.5em} 
\caption{Size of spheres in $\fad(a)$ for $0 \leq n \leq 5$.}
\label{tab:sizeoffad}
\egroup
\end{table}

\vspace{-2em} 

\begin{question}
    What is the idempotent growth rate of $\fad(a)$?
\end{question}

\begin{question}
    Is the growth rate of $\fad(a)$ of the same exponential degree as the idempotent growth rate?
\end{question}

\subsection{Higher rank}

Note that all of our results here rely heavily on a singleton generating set. In the higher rank, we obtain different results.

\begin{proposition}
    The free left adequate monoid $\flad(X)$ of rank at least $|X|\geq 2$ has exponential growth.
\end{proposition}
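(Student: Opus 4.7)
The plan is to exhibit exponentially many distinct elements in the sphere $S_L^X(n)$ for each $n$, by considering purely trunk-only trees (with no branches) whose edges carry arbitrary labels from $X$.

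More precisely, for each word $w = x_1 x_2 \cdots x_n \in X^*$ of length $n$, let $T_w$ denote the $X$-tree consisting of a single directed path from the start vertex to the end vertex with consecutive edge labels $x_1, \ldots, x_n$ (equivalently, $T_w$ is the product $x_1 x_2 \cdots x_n$ in $\flad(X)$ viewed in $\mathrm{LUT}^1(X)$). First I would verify that each such $T_w$ is retract-free: any directed graph endomorphism must fix the start and end vertices, and since $T_w$ is a single directed path with those as unique endpoints, the only such endomorphism is the identity. Hence $T_w \in \flad(X)$.

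Next I would argue that the assignment $w \mapsto T_w$ is injective. Two trees $T_w$ and $T_{w'}$ are equal as elements of $\flad(X)$ precisely when they are isomorphic as birooted labelled graphs (by Theorem~\ref{thm:flad}); since $T_w$ is determined up to isomorphism by the sequence of trunk labels, this forces $w = w'$. Moreover, each $T_w$ has exactly $n$ edges, so by Corollary~\ref{cor:sphereisedges}, $T_w \in S_L^X(n)$. Therefore
\[
|S_L^X(n)| \geq |X|^n \geq 2^n,
\]
which yields the desired exponential lower bound on the spherical growth of $\flad(X)$.

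There is no real obstacle here: the argument is a direct consequence of the fact that distinct trunk words yield non-isomorphic path trees, together with the observation that paths are already retract-free. The only subtlety worth pointing out explicitly is the need to appeal to Theorem~\ref{thm:flad} to ensure $T_w$ represents a genuine element of $\flad(X)$ rather than merely of $\mathrm{LUT}^1(X)$, but since $T_w$ is retract-free, it is a canonical representative of its class.
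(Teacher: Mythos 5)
Your proof is correct and is essentially the paper's argument: the paper simply notes that $X$ generates a free submonoid of rank $|X|$ inside $\flad(X)$, which is exactly your observation that the $|X|^n$ labelled path trees $T_w$ are retract-free and pairwise non-isomorphic, hence distinct elements of $S_L^X(n)$. Your version just spells out the tree-level verification of that one-line claim.
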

\begin{proof}
    Note that $X \subseteq \flad(X)$ generates, as a monoid, a free monoid of rank $|X|$ and hence grows exponentially when $|X| \geq 2$.
\end{proof}

In the two-sided case, as $\fad(a)$ embeds in the free adequate monoid of any rank at least $1$, we also deduce the following from Theorem \ref{thm:fadgrowth}.

\begin{corollary}
   The free adequate monoid of rank at least $1$ has exponential growth.
\end{corollary}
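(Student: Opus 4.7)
The plan is to deduce this corollary very quickly from Theorem~\ref{thm:fadgrowth} together with a straightforward embedding argument. First, I would dispose of the rank-$1$ case immediately: $\fad(a)$ is itself a free adequate monoid of rank $1$, and since $|S(n)| \geq |S_E(n)|$ by definition, Theorem~\ref{thm:fadgrowth} already gives exponential growth of degree at least $2$ for $\fad(a)$ itself.

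For the higher rank case, I would exhibit an embedding $\fad(a) \hookrightarrow \fad(X)$ that preserves edge counts, so that spheres in $\fad(a)$ inject into spheres of the same radius in $\fad(X)$. To construct it, fix any $x \in X$; by freeness of $\fad(a)$ in the quasivariety of adequate monoids (Theorem~\ref{thm:flad}), the map $a \mapsto x$ extends uniquely to a biunary monoid morphism $\iota \colon \fad(a) \to \fad(X)$. On the tree level (via Theorems~\ref{thm:ut}~and~\ref{thm:flad}), $\iota$ simply relabels every edge of a retract-free $a$-tree by $x$; the image consists of precisely those elements of $\fad(X)$ represented by retract-free $X$-trees using only the label $x$, and relabelling in the reverse direction provides an inverse on this subset. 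Hence $\iota$ is injective and clearly preserves the number of edges of a tree.

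Combining this with Corollary~\ref{cor:sphereisedges} (applied in both $\fad(a)$ and $\fad(X)$), we obtain $|S^X(n)| \geq |S^{\{a\}}(n)|$ for every $n \in \N_0$, so exponential growth of $\fad(a)$ transfers to $\fad(X)$. There is no real obstacle here: the only thing to check is that the relabelling map is well-defined on $\fad(a)$ (equivalently, that it descends through the retraction quotient), which follows because the retract-free representative is characterised purely by the underlying labelled tree structure, and that structure is preserved under relabelling all edges by $x$.
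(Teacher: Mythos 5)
Your proposal is correct and follows essentially the same route as the paper: the paper deduces the corollary from Theorem~\ref{thm:fadgrowth} by noting that $\fad(a)$ embeds in $\fad(X)$ for any $|X|\geq 1$, which is exactly your relabelling embedding, with your edge-count/sphere bookkeeping via Corollary~\ref{cor:sphereisedges} just making the transfer of the exponential lower bound explicit.
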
 

\begin{question}
    What is the (idempotent) growth rate of $\flad(X)$ and $\fad(X)$ for higher rank?
\end{question}

We remark on a rudimentary way to bound the growth of $\fad(X)$ in higher rank from below. Suppose $|X| \geq 2$. Consider any zig-zag tree in $T \in Z(n,i)$. Since $T$ is retract-free, note that any assignment of edge labels to $T$ from $X$ still yields a retract-free tree. Since there are $|X|^n$ such assignments, it follows from Theorem \ref{thm:fadgrowth} that the number of idempotents in $\fad(X)$ with $n$ edges is at least \[|X|^n\begin{pmatrix} n-1 \\ \lfloor\frac{n-1}{2}\rfloor\end{pmatrix} \sim \frac{(2|X|)^n}{2\sqrt{(n-1)\pi}}\]and thus the growth rate of $E(\fad(X))$ is at least $2|X|$.

\section{Identities of free adequate monoids}\label{sec:identities}

In this section, we aim to classify the identities satisfied by the monogenic free left adequate monoid. 
Recall that we view left adequate monoids in the $(2,1,0)$ signature, so in this section, we investigate the (unary monoid) identities satisfied by $(\flad(a),{}^+)$. Then, as a corollary, we give the (non-unary) \emph{monoid identities} satisfied by their monoid reducts.

\subsection{Enriched Identities}
For $u,v \in \fu(\Sigma)$, we say $u \approx v$ is an \emph{identity} and say a unary monoid $\cM$ satisfies $u \approx v$ if $\phi(u) = \phi(v)$ for all unary monoid morphisms $\phi\colon \fu(\Sigma) \to \cM$.
We say a word $u \in \fu(\Sigma)$ is \emph{non-nested} if $u$ is an element of the free monoid over $\osigma = \Sigma \cup \fmp$ where $\fmp :=\{u^+ \colon u \in \fm(\Sigma)\}$, that is, $u$ does not contain $(st^+r)^+$ for any $s,t,r \in \fu(\Sigma)$.

It follows from Lemma \ref{lem:fladnonnested} that, given any identity $u \approx v$, there exists a non-nested identity $u' \approx v'$ such that $u \approx v$ is satisfied by $\flad(a)$ if and only if $u' \approx v'$ is satisfied by $\flad(a)$. Thus, in the following, we give our results for non-nested identities.

For $u \in \fm(\ol{\Sigma})$ and $x \in \osigma$, let $|u|_x$ be the number of times $x$ occurs in $u$. Note that, especially in the monogenic case, $|u|_a$ may be different from the previously used $|u|$. In particular, occurrences of $a^+$ in $u$ do not count towards $|u|_a$, that is, $|a^+|_a = 0$.

Let $\supp(u)$ be the set containing $x \in \osigma$ if $|u|_x \geq 1$. Let $\maxp_u(x)$ be the longest prefix of $u$ ending in $x$. Let $\maxp_u'(x)$ be $\maxp_u(x)$ with the $x$ removed from the right. Let $\ol{u} \in \fm(\Sigma)$ be the word obtained by removing every element of $(\fm(\Sigma))^+$ from $u$.
\begin{example}
    Let $u = ab^+bb^+aab^+(ab)^+(ab)^+$. Then \begin{itemize}
        \item $\supp(u) = \{a,b,b^+,(ab)^+\}$,
        \item $|u|_a = 3$ and $|u|_b = 1$,
        \item $\maxp_u(b^+) = ab^+bb^+aab^+$ and $\maxp'_u(a) = ab^+bb^+aa$.
        \item $\overline{u} = abaa$.
    \end{itemize}
\end{example}
For $u \in \fm(\ol{\Sigma})$ and $x \in \supp(u)$, let $\suff_u(x)$ be
\begin{enumerate}[(i)]
    \item the shortest suffix of $u$ containing $x$ if $x \in \Sigma$,
    \item the longest suffix of $u$ with no element from $\Sigma$ to the left of any copy of $x$ if $x \in \fm(\Sigma)^+$.
\end{enumerate}

In other words, if $x \in \fm(\Sigma)^+$ and $v$ is the shortest suffix of $u$ containing $x$ then, $\suff_u(x)$ is the longest suffix of $u$ with $|\suff_u(x)|_y = |v|_y$ for all $y \in \Sigma$, that is, any elements from $\fm(\Sigma)^+$ to the left of $x$ in $u$ are also included but no more elements from $\Sigma$.

\begin{example}
    Let $u = (ab)^+ab^+(ab)^+bc(ba)^+baa$. Then \begin{itemize}
        \item $\suff_u(b) = baa$,
        \item $\suff_u((ab)^+) = b^+(ab)^+bc(ba)^+baa$.
    \end{itemize}
\end{example}

Given a morphism $\phi \colon \fu(\Sigma) \to \flad(a)$, let $\rho_\phi \colon \fu(\Sigma) \to \N$ be the morphism obtained by mapping each $y \in \flad(a)$ to the trunk length of $\phi(y)$. 

\begin{lemma} \label{lem:TrunksPref}
    Let $w \in \fm(\Sigma)$ and $\phi \colon \fm(\Sigma) \to \flad(a)$. Then,
    \[ |\phi(w)^+| = \max_{x \in \supp(w)} (\rho_\phi(\maxp_w'(x)) + |\phi(x)^+|)\]
\end{lemma}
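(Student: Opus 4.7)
The plan is to describe $\phi(w)$ as an explicit $a$-tree obtained by gluing smaller pieces and then read off the length of a longest directed path from its start vertex. Proposition~\ref{prop:idemsofmono} identifies the idempotents of $\flad(a)$ with the single-branch trees $(a^j)^+$; applying this to the idempotent $\phi(w)^+$ shows that $|\phi(w)^+|$ equals the length of a longest directed path from the start vertex of $\phi(w)$ (since $\phi(w)^+$ is the retract-free retract of the single-path tree obtained by moving the end vertex of $\phi(w)$ to its start). Write $w = y_1 y_2 \cdots y_n$ with each $y_i \in \Sigma$. Multiplication in $\flad(a)$ descends from the end-to-start gluing in $\ut^1(a)$ via Theorem~\ref{thm:flad}, so $\phi(w)$ is obtained by concatenating $\phi(y_1), \ldots, \phi(y_n)$ along their trunks. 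Consequently, the trunk of $\phi(w)$ is the concatenation of the trunks of the pieces, while every non-trunk edge of $\phi(w)$ lies entirely inside one of the $\phi(y_i)$.

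Any directed path from the start of $\phi(w)$ either stays on the global trunk, or branches off at some trunk vertex into a non-trunk edge belonging to exactly one piece $\phi(y_i)$ and continues within that piece. A pure-trunk path has length $\rho_\phi(w) \leq \rho_\phi(y_1 \cdots y_{n-1}) + |\phi(y_n)^+|$, since $|\phi(y_n)^+|$ is at least the trunk length of $\phi(y_n)$. A branching path entering $\phi(y_i)$ has length at most $\rho_\phi(y_1 \cdots y_{i-1}) + |\phi(y_i)^+|$, and this bound is achieved by choosing a longest path within $\phi(y_i)$. Taking the maximum thus yields
\[
|\phi(w)^+| \;=\; \max_{1 \leq i \leq n}\bigl(\rho_\phi(y_1 \cdots y_{i-1}) + |\phi(y_i)^+|\bigr).
\]

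To reindex this maximum by letters in $\supp(w)$, I would use that $\rho_\phi$ takes values in $\N$, so $i \mapsto \rho_\phi(y_1 \cdots y_{i-1})$ is non-decreasing. For a fixed letter $x \in \supp(w)$, the summand $|\phi(x)^+|$ is constant across positions $i$ with $y_i = x$, so the maximum of $\rho_\phi(y_1 \cdots y_{i-1}) + |\phi(x)^+|$ over such $i$ is attained at the rightmost such position, where $y_1 \cdots y_{i-1}$ equals $\maxp_w'(x)$ by definition. Substituting produces the claimed formula.

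The main obstacle is a clean justification of the path decomposition, namely that once a directed path leaves the global trunk via a non-trunk edge it must remain inside a single piece $\phi(y_i)$. This is immediate from the gluing construction — adjacent pieces meet only at trunk vertices, and non-trunk edges are inherited unchanged from a specific piece — but it merits explicit mention to confirm that no longer path can be produced by hopping between pieces.
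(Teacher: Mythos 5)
Your proof is correct and follows essentially the same route as the paper: identify $|\phi(w)^+|$ with the length of a longest directed path from the start vertex of (a representative of) $\phi(w)$, decompose such a path through the glued pieces $\phi(y_i)$, and observe that for each letter the rightmost occurrence dominates because $\rho_\phi$ is non-decreasing on prefixes. The only difference is cosmetic: where you argue directly from the gluing description of the product, the paper compresses the decomposition step by citing Lemma~\ref{lem:removebranches} to replace the prefix $\maxp_w'(x)$ by its trunk $\Theta(\maxp_w'(x))$.
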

\begin{proof}
    Note that $|\phi(w)^+|$ is the length of any longest path beginning from the start in $\phi(w)$. 
    Such a path must end at an edge in $\phi(x)$ for some $x \in \supp(w)$.
    It follows from Lemma~\ref{lem:removebranches} that $\phi(w)^+ = (\Theta(\maxp'_w(x))x)^+$ and hence \[|\phi(w)^+| = |(\Theta(\maxp'_w(x))\phi(x))^+| = \rho_\phi(\maxp_w'(x) + |\phi(x)^+|\]For any letter $y \in \supp(w)$ with $w \neq x$, all paths in $\phi(w)$ which end in $\phi(y)$ must be at most as long as the path ending in $\phi(x)$. It follows that \[|\phi(w)^+| = \max_{x \in \supp(w)} (\rho_\phi(\maxp_w'(x)) + |\phi(x)^+|)\]as required.
\end{proof}

For $u \in \fm(\osigma)$, $w^+ \in \supp(u)$, and $x \in \supp(w)$, define $P^u_{w,x}, Q^u_{w,x}, R^u_{w,x}  \subseteq \fm(\osigma)$ by
\begin{align*}
P^u_{w,x} &= \{\ol{\maxp_{\suff_u(w^+)}(k^+)}\cdot k^+ \colon k^+ \in \supp(\suff_u(w^+))\text{ and } x \in \supp(k) \} \\
Q^u_{w,x} &= \begin{cases}
    P^u_{w,x} &\text{if } x \notin \supp(\suff_u(w^+)) \\
    P^u_{w,x} \cup  \{\ol{\suff_u(w^+)} \cdot \varepsilon^+\} &\text{otherwise}.
\end{cases}\\
R^u_{w,x} &= Q^u_{w,x} \setminus \{ k^+ \colon |\maxp_w(x)|_y = |\maxp_k(x)|_y \text{ for all } y \in \Sigma\}
\end{align*}

\begin{example}
    Let $u = a(ab)^+cb^+(bc)^+bab^+ba^+c^+b^+ba$. Let $w = bc$. We have that $\suff_u(w^+) = b^+(bc)^+bab^+ba^+c^+b^+ba$ and hence\begin{itemize}
        \item $P^u_{w,b} = \{b^+,(bc)^+,bab^+,babb^+\}$,
        \item $Q^u_{w,b} = \{b^+,(bc)^+,bab^+,babb^+,babba\varepsilon^+\}$,
        \item $R^u_{w,b} = \{bab^+,babb^+,babba\varepsilon^+\}$,
        \item $P^u_{w,c} = Q^u_{w,c} = \{(bc)^+,babc^+\}$,
        \item $R^u_{w,c} = \{babc^+\}$.
    \end{itemize}
\end{example}

We will require the following construction, which we will repeatedly use in proving a classification for the identities satisfied by $(\flad(a),{}^+)$.

\begin{lemma} \label{lem:ScaleMorphismUp}
    Let $\Sigma$ be finite, $\psi \colon \fm(\Sigma) \to \N$ be a morphism, and $L \in \N$. Then, there exists a morphism $\zeta \colon \fm(\Sigma) \to \N$ such that, for all $u,v \in \fm(\Sigma)$ with $|u|,|v| < L$,
    \begin{enumerate}[(i)]
        \item if $\psi(u) > \psi(v)$ then $\zeta(u) > \zeta(v)$,
        \item if $\zeta(u) \geq \zeta(v)$ then $\psi(u) \geq \psi(v)$, and
        \item $\zeta(u) = \zeta(v)$ if and only if $|u|_y = |v|_y$ for all $y \in \Sigma$.
    \end{enumerate}
\end{lemma}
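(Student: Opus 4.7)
The plan is to construct $\zeta$ as $M\psi + \xi$, where $\xi$ is a tiebreaker morphism that encodes the Parikh vector of a word as a ``digit string'' in a base chosen large enough to prevent collisions on the range of interest, and $M$ is a scalar making the $\psi$-component dominate so that the order induced by $\psi$ is preserved by $\zeta$. Both summands will be honest monoid morphisms $\fm(\Sigma) \to \N$, so $\zeta$ will be too.

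First, I would enumerate $\Sigma = \{y_1, \dots, y_k\}$ and set $B := L$. Define the morphism $\xi\colon \fm(\Sigma) \to \N$ by $\xi(y_i) := B^{i-1}$, so that $\xi(u) = \sum_{i=1}^{k} B^{i-1}\,|u|_{y_i}$. For $u,v$ with $|u|,|v| < L$, the ``digit vector'' $d_i := |u|_{y_i} - |v|_{y_i}$ satisfies $|d_i| \leq L-1 = B-1$. A standard base-$B$ uniqueness argument (if $j$ were the largest index with $d_j \neq 0$, then $|d_j|B^{j-1} \leq \sum_{i<j}(B-1)B^{i-1} = B^{j-1}-1$, contradicting $|d_j|\geq 1$) shows that $\xi(u) = \xi(v)$ forces every $d_i$ to vanish, i.e.\ $|u|_y = |v|_y$ for all $y \in \Sigma$. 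The converse is immediate from $\xi$ being a morphism, and the same bound yields the uniform estimate $|\xi(u) - \xi(v)| \leq B^k - 1$.

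Next, set $M := B^k$ and define $\zeta\colon \fm(\Sigma) \to \N$ by $\zeta(y_i) := M\psi(y_i) + B^{i-1}$, so that $\zeta = M\psi + \xi$. For (i), if $\psi(u) > \psi(v)$ then $\psi(u) - \psi(v) \geq 1$ (both sides are integers), hence $\zeta(u) - \zeta(v) \geq M - (B^k - 1) \geq 1$. Condition (ii) is the contrapositive of (i) after swapping $u$ and $v$. For (iii), the reverse direction follows immediately because $\zeta$ is a morphism; for the forward direction, $\zeta(u) = \zeta(v)$ combined with (i) forces $\psi(u) = \psi(v)$, hence $\xi(u) = \xi(v)$, which by the previous paragraph forces $|u|_y = |v|_y$ for all $y$.

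The main tension to resolve is that (i)/(ii) require $\zeta$ to refine the pre-order coming from $\psi$, whereas (iii) requires $\zeta$ to be injective on Parikh vectors, and these demands can conflict if $\psi$ happens to ignore some letters of $\Sigma$. There is no deeper obstacle beyond the classical ``big base plus big multiplier'' trick: build a Parikh-injective fingerprint $\xi$ in a base $B \geq L$ large enough to avoid collisions on the bounded range, then scale $\psi$ by a multiplier $M$ exceeding the maximum magnitude of that fingerprint so that $\xi$ can never overturn a strict inequality already witnessed by $\psi$.
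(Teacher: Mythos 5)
Your proof is correct and uses essentially the same construction as the paper: the paper sets $\zeta(\sigma_i) = L^r\psi(\sigma_i) + L^i$, which is exactly your $M\psi + \xi$ with $M = L^{|\Sigma|}$ and base-$L$ digit fingerprint, differing only in an index shift. The verification differs only cosmetically (you bound the difference $|\xi(u)-\xi(v)| \leq L^{|\Sigma|}-1$ directly, while the paper uses the sandwich $L^r\psi(u) + L^r > \zeta(u) \geq L^r\psi(u)$), so this is the same argument.
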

\begin{proof}
    Index $\Sigma$ by $[r-1]$, so that $\Sigma = \{\sigma_i \colon i \in [r-1]\}$, and define $\zeta \colon \fm(\Sigma) \to \N$ by $\zeta(\sigma_i) = L^r\cdot \psi(\sigma_i) + L^i$. Then, for $u \in \fm(\Sigma)$ with $|u| < L$, we have that 
    \[ L^r\cdot \psi(u) + L^r > \zeta(u) \geq L^r\cdot \psi(u).\]

    Thus, for $u,v \in \fm(\Sigma)$ with $|u|,|v| < L$, we obtain that:
    \begin{enumerate}[(i)]
        \item If $\psi(u) > \psi(v)$, then 
        \[ \zeta(u) \geq L^r\cdot \psi(u) \geq L^r \cdot \psi(v) + L^r > \zeta(v).\]
        \item If $\zeta(u) \geq \zeta(v)$, then 
        \[ \psi(u) > \frac{\zeta(u)}{L^r} -1 \geq \frac{\zeta(v)}{L^r} -1 \geq \psi(v) -1.\]
        Hence $\psi(u) \geq \psi(v)$ as $\psi(u),\psi(v) \in \N$.
        \item Note that the right-to-left implication is clear, so suppose $\zeta(u) = \zeta(v)$. Then:
    \[ L^r\cdot \psi(u) + \sum_{i \in [r-1]}L^i\cdot|u|_{\sigma_i}  =  \zeta(u) = \zeta(v) = L^r\cdot \psi(v) + \sum_{i \in [r-1]}L^i\cdot|v|_{\sigma_i}. \]
    Thus, $|u|_{\sigma_i} = |v|_{\sigma_i}$ for all $i \in [r-1]$ as $L > |u|, |v|$. \qedhere 
    \end{enumerate}
\end{proof}

For the forthcoming result, we introduce the following terminology. We say that a directed path $B$ in a tree $T \in \ut^1(X)$ \textit{strongly retracts} if for every retraction $\rho$ of $T$ with image $\overline{T}$ and every edge $e$ of $B$, we have $\rho(e) \neq e$.

Equivalently, a path strongly retracts if there is no subgraph of $T$ isomorphic to $\overline{T}$ containing any edge of $B$.

\begin{example}
    Figure \ref{fig:strongly} shows three examples of left $a$-trees with paths denoted $B_1$, $B_2$, and $B_3$ respectively. The branch $B_1$ may not retract at all. The branch $B_2$ does not strongly retract (even though there is some retraction which `erases' it). The branch $B_3$ strongly retracts.

    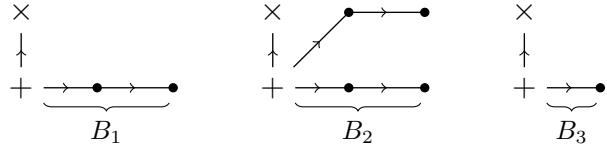
\begin{figure}[h]
        \centering
        \begin{tikzpicture}
            \setup

            \node (A) at ( 0,0) {\large$+$};
            \Vertex[x=1,y=0]{L1}
            \Vertex[x=2,y=0]{L2}
            \node (R1) at ( 0,1) {\large$\times$};
                      
            \Edge(A)(R1);
            \Edge(A)(L1);
            \Edge(L1)(L2);

            \draw [decorate,decoration={brace,raise=2mm,amplitude=3pt,mirror}] (A) -- (L2) node [font=\small, below, pos=.5, yshift=-3mm] {$B_1$};
            
        \end{tikzpicture}
        \hspace{0.05\textwidth}
        \begin{tikzpicture}
            \setup

            \node (A) at ( 0,0) {\large$+$};
            \Vertex[x=1,y=0]{L1}
            \Vertex[x=2,y=0]{L2}
            \node (R1) at ( 0,1) {\large$\times$};

            \Vertex[x=1,y=1]{L3};
            \Vertex[x=2,y=1]{L4};
                      
            \Edge(A)(R1);
            \Edge(A)(L1);
            \Edge(L1)(L2);
            \Edge(A)(L3);
            \Edge(L3)(L4);

            \draw [decorate,decoration={brace,raise=2mm,amplitude=3pt,mirror}] (A) -- (L2) node [font=\small, below, pos=.5, yshift=-3mm] {$B_2$};
            
        \end{tikzpicture}
        \hspace{0.05\textwidth}
        \begin{tikzpicture}
            \setup

            \node (A) at ( 0,0) {\large$+$};
            \Vertex[x=1,y=0]{L1}
            \node (R1) at ( 0,1) {\large$\times$};
                      
            \Edge(A)(R1);
            \Edge(A)(L1);

            \draw [decorate,decoration={brace,raise=2mm,amplitude=3pt,mirror}] (A) -- (L1) node [font=\small, below, pos=.5, yshift=-3mm] {$B_3$};
            
        \end{tikzpicture}
        \caption{Three left $a$-trees with branches denoted.}
        \label{fig:strongly}
    \end{figure}

\end{example}

We are now in a position to classify the enriched identities satisfied by $\flad(a)$.

\begin{theorem} \label{thm:fladunaryidentities}
    Let $u \approx v$ be a non-nested identity over $\Sigma$. 
    Then, $u \approx v$ is satisfied by $(\flad(a),{}^+)$ if and only if
    \begin{enumerate}[(i)]
        \item $|u|_x = |v|_x$ for all $x \in \Sigma$.
        \item For all $x \in \supp(u) \cap \Sigma$, either 
        \begin{enumerate}[(a)]
            \item there exists $w^+ \in \supp(\suff_u(x))$ with $x \in \supp(w)$, or 
            \item $|\suff_{u}(x)|_y = |\suff_{v}(x)|_y$ for all $y \in \Sigma$.
        \end{enumerate}
        \item For all $w^+ \in \mathrm{supp}(u)$ and $x \in \supp(w)$, either
        \begin{enumerate}[(a)]
            \item for every morphism $\psi\colon \fm(\Sigma) \to \N$ there exists $pk^+ \in R^u_{w,x}$ such that $\psi(\maxp_w(x)) \leq \psi(\maxp_{pk}(x))$, or
            \item there exists some $h^+ \in \supp(v)$ with $|\suff_u(w^+)|_y = |\suff_v(h^+)|_y$ and $|\maxp_w(x)|_y = |\maxp_h(x)|_y$ for all $y \in \Sigma$. 
        \end{enumerate}
            \item Dual of (iii) for $v$.
    \end{enumerate}
\end{theorem}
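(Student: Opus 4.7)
My plan is to prove both implications of the equivalence. By Lemma~\ref{lem:fladnonnested}, it suffices to work with non-nested identities, so $u,v\in\fm(\osigma)$ where $\osigma=\Sigma\cup(\fm(\Sigma))^+$, and a morphism $\phi\colon\fu(\Sigma)\to\flad(a)$ is determined by the trees $\phi(y)$ for $y\in\Sigma$. Write $\rho_\phi\colon\fm(\Sigma)\to\N$ for the induced trunk-length morphism. By Proposition~\ref{prop:idemsofmono} and Corollary~\ref{cor:productofmonoidems}, the retract-free form of an element of $\flad(a)$ is determined by its trunk length together with the (unique) branch length at each trunk vertex, and two elements coincide if and only if both data agree. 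The whole task is therefore to compute, at each trunk vertex of $\phi(u)$ and $\phi(v)$, the corresponding branch length and compare.

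For the backward direction, I would assume (i)--(iv) and take any $\phi$. Equal trunk lengths are immediate from (i). At each trunk vertex of $\phi(u)$, the branch length is the length of the longest off-trunk path from that vertex, which I would compute by applying Lemma~\ref{lem:TrunksPref} to the ``residue'' of $\phi(u)$ from that vertex. The resulting maximum ranges over candidates of two kinds: one for each letter $x\in\Sigma$ lying beyond the vertex, of value $\rho_\phi(\maxp'_u(x))+|\phi(x)^+|$; and one for each $+$-term $w^+$ lying beyond the vertex, which a second application of Lemma~\ref{lem:TrunksPref} expands into contributions $\rho_\phi(\ol{q})+|\phi(x)^+|$ for each $x\in\supp(w)$ (where $q$ is the prefix in $\suff_u(w^+)$ up to the relevant $\maxp_w(x)$). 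I would then match each $u$-side candidate with one on the $v$-side using the (b)-clauses of (ii), (iii), (iv), giving equal-size contributions via (i); the (a)-clauses say that any remaining candidates are strictly dominated by elements of $R^u_{w,x}$ (or $R^v_{w,x}$) and hence invisible to the maximum. Lemma~\ref{lem:ScaleMorphismUp} is essential here: it lets me replace the specific $\rho_\phi$ by a generic $\psi\colon\fm(\Sigma)\to\N$ with the same order comparisons on the finitely many subwords that matter, so that the universal quantifier on $\psi$ in (iii)(a) can be invoked on the particular $\rho_\phi$ at hand.

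For the forward direction, I would exhibit a separating morphism for each violated condition. Failure of (i) is separated by a projection to $(\N,+)$. For the other failures, Lemma~\ref{lem:ScaleMorphismUp} is used to realise a prescribed trunk-length profile, and each $\phi(x)$ is given an end-branch long enough to force the specific candidate in question to dominate the maximum of Lemma~\ref{lem:TrunksPref} at the disputed trunk vertex of $\phi(u)$, while the failure hypothesis is used to check that $\phi(v)$ has no matching candidate. A failure of (iii)(a) is handled similarly: the witness $\psi$ supplied by the failure is turned into a $\rho_\phi$ via Lemma~\ref{lem:ScaleMorphismUp} and $\phi(x)$ is then chosen so that the $(w^+,x)$-contribution survives in $\phi(u)$. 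The main obstacle, pervading both directions, is the combinatorial bookkeeping around the sets $R^u_{w,x}$: these are tailored to pick out precisely the alternative candidates sharing the same trunk vertex as $(w^+,x)$, with the awkward exclusion of content-equivalent candidates, and this is exactly what makes the domination/matching dichotomy work. Keeping the ``who dominates whom'' relations in lockstep across $u$ and $v$, while simultaneously decoupling trunk-length contributions from branch contributions via Lemma~\ref{lem:ScaleMorphismUp}, will be the most delicate aspect of the argument.
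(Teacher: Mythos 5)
Your necessity half is essentially the paper's argument in contrapositive form (separating morphisms, a projection for (i), long retract-free branches, and Lemma~\ref{lem:ScaleMorphismUp} applied to the witness $\psi$ when (iii)(a) fails), and that part of the plan is sound. The gap is in the sufficiency half. Your matching scheme rests on the claim that candidates covered only by the (a)-clauses are ``strictly dominated \dots and hence invisible to the maximum.'' But (iii)(a) gives only a \emph{weak} inequality $\psi(\maxp_w(x)) \leq \psi(\maxp_{pk}(x))$ against elements of $R^u_{w,x}$, and the dominating term $pk^+$ may have $p=\varepsilon$ and sit at the very same trunk vertex, so it can \emph{co-create} the same branch rather than erase it; worse, $R^u_{w,x}$ deliberately excludes content-equivalent candidates, so the dominating term need not itself satisfy (iii)(b). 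Thus a branch of $\phi(u)$ could a priori be created only by candidates all of which satisfy just (iii)(a), and your plan gives no way to match it in $\phi(v)$. The paper closes exactly this hole with a non-trivial argument: choose a rightmost creator $w^+$, introduce the sets $E$, $W$, $W'$, use the strong-retraction analysis to show any dominating $pk^+$ has $p=\varepsilon$, and then use Lemma~\ref{lem:ScaleMorphismUp}(iii) to pick a $\zeta$-maximal element of $W'$ and derive a contradiction with the definition of $R^u_{t,x}$ (domination forces content equality, which is excluded), concluding that some co-creator satisfies (iii)(b). Your stated use of Lemma~\ref{lem:ScaleMorphismUp} in this direction (``so that the universal quantifier on $\psi$ can be invoked on $\rho_\phi$'') misses its actual role here: one can instantiate (iii)(a) at $\rho_\phi$ directly; the lemma is needed for the maximality/content-equality step.

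Two further points your sketch does not address. First, the hypotheses are not $u$--$v$ symmetric: (ii) is stated only for $u$, so before you can match branches of $\phi(v)$ created by letters of $\Sigma$ you must \emph{derive} the $v$-dual of (ii) from (i)--(iv); the paper devotes a separate argument (via the morphism $\delta$ counting occurrences of $x$ and another $E$, $W$, $W'$ analysis) to this, and a purely symmetric ``match both ways'' plan is not available as stated. Second, computing ``the branch length at each trunk vertex'' as a maximum of candidate contributions ignores retraction into parts of the tree nearer the end: whether a candidate's path survives in the retract-free form depends on branches closer to the end vertex, so the comparison must be organised as an induction from the end (the paper's statement $\cB(m)$); the raw per-vertex maxima of $\phi(u)$ and $\phi(v)$ need not agree even when their retracts do, so a candidate-by-candidate matching of maxima is too strong a goal.
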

\begin{proof}
    Note that since $\fu(\Sigma)$ is freely generated by $\Sigma$, we may define morphisms $\phi \colon \fu(\Sigma) \to \flad(a)$ by specifying their image on $\Sigma$. We begin by showing that the conditions (i)--(iv) are necessary. Suppose $u \approx v$ is satisfied by $\flad(a)$, i.e. all morphisms $\phi \colon \fu(\Sigma) \to \flad(a)$ agree on $u$ and $v$. Throughout, we assume $\Sigma$ is finite (if not, then restrict to only letters appearing in $u$ or $v$).
    \begin{enumerate}[(i)]
        \item Let $x \in \Sigma$ and define the morphism $\phi\colon \fu(\Sigma) \to \flad(a)$ by $\phi(x) = a$ and $\phi(y) = \varepsilon$ for all $y \in \Sigma \setminus \{x\}$. Then, $|u|_x = \rho_\phi(u) = \rho_\phi(v) = |v|_x$. As $x \in \Sigma$ was arbitrary, (i) holds.

        \item Suppose (iia) does not hold for some $x \in \supp(u) \cap \Sigma$ -- we show (iib) holds. Note that $x \in \supp(v)$ by (i), so certainly $|\suff_u(x)|_x = |\suff_v(x)|_x = 1$. 
        Now, let $y \in \Sigma \setminus \{ x\}$ and define the morphism $\phi \colon \fu(\Sigma) \to \flad(a)$ by $\phi(y) = a$, $\phi(x) = a^N(a^{2N})^+a^N$ for $N > |u| + |v|$, and $\phi(z) = \varepsilon$ for all $z \in \Sigma \setminus \{x,y\}$. Then,
        \[ \phi(u) = Sa^N(a^{2N})^+a^NT \text{ and } \phi(v) = S'a^N(a^{2N})^+a^NT'\]
    for some $S,S',T,T' \in \flad(a)$ where $T$ and $T'$ have $|\suff_u(x)|_y$ and $|\suff_v(x)|_y$ trunk edges respectively.

    \par Note $|T| < |u|$ as there does not exist $w^+ \in \supp(\suff_u(x))$ with $x \in \supp(w)$ and $|\suff_u(x)|_x = 1$.
    Thus, the rightmost $(a^{2N})^+$ in $\phi(u)$ corresponds to a retract-free branch as $2N > N + |u|$. 
    Then, as $\phi(u) = \phi(v)$, we have that $\phi(v)$ has a branch of length $2N$ on the trunk vertex $|\suff_u(x)|_y+N$ edges from the end vertex.
    
    However, $\phi(v)$ contains at most one branch between $|\suff_v(x)|_y + 1$ and $|\suff_v(x)|_y + 2N -1$ edges from the end vertex, exactly $|\suff_v(x)|_y + N$ edges from the end vertex. Thus, as $N > |v| > |\suff_v(x)|_y$, we can only obtain equality if $|\suff_u(x)|_y = |\suff_v(x)|_y$.
    As the choice of $x$ and $y$ was arbitrary, (iib) holds.    

    \item Suppose (iiia) does not hold for some $w^+ \in \supp(u)$ and $x \in \supp(w)$. Then, there exists a morphism $\psi \colon \fm(\Sigma) \to \N$ such that $\psi(\maxp_w(x)) > \psi(\maxp_{pk}(x))$ for all $pk^+ \in R^u_{w,x}$. 
    Let $\zeta \colon \fm(\Sigma) \to \N$ be the morphism obtained by Lemma~\ref{lem:ScaleMorphismUp} with $\psi$ and $L > |u|+|v|$. Then, by Lemma~\ref{lem:ScaleMorphismUp}(i), $\zeta(\maxp_w(x)) > \zeta(\maxp_{pk}(x))$ for all $pk^+ \in R^u_{w,x}$.
    
    Define $\phi \colon \fu(\Sigma) \to \flad(a)$ by $\phi(x) = a^{\zeta(x)}(a^N)^+$,  $\phi(y) = a^{\zeta(y)}$ for $y \neq x$ where $N > |u| \cdot \max_{z \in \Sigma} \zeta(z)$. We now show the following lemma.

    \begin{lemma}\label{lem:isxinit}
        Let $s \in \fm(\Sigma)$ with $|s| \leq |u|$. Then \begin{itemize}
            \item If $x \notin \supp(s)$, then $|\phi(s)^+| < N$,
            \item If $x \in \supp(s)$, then $|\phi(s^+)| = \zeta(\maxp_s(x)) + N > N$.
        \end{itemize}
    \end{lemma}

    \begin{proof}
        Let $s \in \fm(\Sigma)$ with $|s| \leq |u|$. Recall Lemma \ref{lem:TrunksPref} and the map $\rho_\phi$. Note first that, for all $y \in \Sigma$, 
        \[ \rho_\phi(\maxp_s(y)) \leq \rho_\phi(s) \leq \sum_{z \in \Sigma}\rho_\phi(z)|s|_z = \sum_{z \in \Sigma}\zeta(z)|s|_z < N.\]
        By Lemma \ref{lem:TrunksPref}, there exists $y \in \Sigma$ such that $|\phi(s^+)| = \rho_\phi(\maxp_s'(y)) + |\phi(y)^+|$.
        If $y \neq x$, then our choice of $\phi$ ensures that we have $|\phi(y)^+| = \zeta(y) = |\phi(y)|$, and thus $|\phi(s^+)| = \rho_\phi(\maxp_s'(y)) + |\phi(y)| = \rho_\phi(\maxp_s(y)) < N$. Hence, if $x \notin \supp(s)$ we certainly have $|\phi(s^+)| < N$. 
        
        Conversely suppose $x \in \supp(s)$. Then, $|\phi(s^+)| \geq |\phi(x)| \geq N$, and thus we must have $y = x$. It follows then that
        \[ |\phi(s^+)| = \rho_\phi(\maxp_s'(x)) + |\phi(x)^+| = \zeta(\maxp_s'(x)) + \zeta(x) +  N = \zeta(\maxp_s(x)) + N.\]
        Since $x \in \supp(s)$, $\maxp_s(x) \neq \varepsilon$, and thus $\zeta(\maxp_s(x)) \neq 0$ by Lemma \ref{lem:ScaleMorphismUp}(iii). Hence $|\phi(s^+)| > N$.
    \end{proof} 
    
    By applying Lemma \ref{lem:isxinit} to $w$ and $\maxp_w(x)$, we see that \[|\phi(w^+)| = |\phi(\maxp_w(x)^+)| = \zeta(\maxp_w(x)) + N.\] Additionally note that the path in $\phi(u)$ contributed by $\phi(w^+)$ has initial vertex $\zeta\left(\overline{\suff_u(w^+)}\right)$ trunk edges from the end vertex. We now claim the following.
    
    \textbf{Claim:} The path contributed by $\phi(w^+)$ in $\phi(u)$ is a branch which does not strongly retract.
    
    \textit{Proof of Claim.} Certainly the longest path in $\phi(\suff_u(w^+))$ has length $|\phi(pk^+)^+|$ for some $k^+ \in \supp(\suff_u(w^+)\cdot\varepsilon^+)$ and $p = \ol{\maxp_{\suff_{u}(w^+)\cdot \varepsilon^+}(k^+)}$. Note that by taking $k^+ = \varepsilon^+$ we have $p = \ol{\suff_u(w^+)}$. To see that the longest path is of this form, note that it must either end at a vertex in some $\phi(k^+)$ or at $\phi(y)$ for some $y \in \Sigma$.

    Now, let $p$ and $k^+$ be as above. By Lemma~\ref{lem:isxinit}, if $x \not\in \supp(pk)$, then $|\phi(pk^+)^+| < N \leq |\phi(w^+)|$. Similarly, if $pk^+ \in R^u_{w,x}$ then 
    \[ |\phi(w^+)| = \zeta(\maxp_w(x)) + N  >\zeta(\maxp_{pk}(x)) + N = \rho_\phi(\maxp_{pk}'(x)) + |\phi(x)^+| = |\phi(pk^+)^+|.\] 
    Finally, if $x \in \supp(pk)$ but $x \notin \supp(k)$, then 
    \[ |\phi(pk^+)^+| = |\phi(p)^+| \leq |\phi(\ol{\suff_u(w^+)}| < |\phi(w^+)|\]
    where the final inequality follows since, in this case, $\ol{\suff_u(w^+)}\varepsilon^+ \in R^u_{w,x}$.

    In all cases above, $\phi(w^+)$ does not strongly retract. It follows that if $|\phi(pk^+)| \geq |\phi(w^+)|$, then we have $pk^+ \in P^u_{w,x} \setminus R^u_{w,x}$, that is, $p = \varepsilon$ and $|\maxp_w(x)|_y = |\maxp_k(x)|_y$ for all $y \in \Sigma$. We remark that, by Lemma~\ref{lem:isxinit}, $\phi(k^+) = \phi(w^+)$. As $p = \varepsilon$, the path corresponding to $\phi(w^+)$ in $\phi(u)$ does not {\em strongly} retract, and we have shown the claim.
    
    Now, since $\phi(u) = \phi(v)$, we have that $\phi(v)$ has a branch corresponding to $\phi(w^+)$. By our choice of $N$, $\phi(v)$ can only have a branch of length \linebreak$\zeta(\maxp_w(x)) + N > N$ at the same position if there exists $h^+ \in \supp(v)$ with $\zeta(\ol{\suff_u(w^+)}) = \zeta(\ol{\suff_v(h^+)})$ and $\zeta(\maxp_w(x)) = \zeta(\maxp_h(x))$. 
    By Lemma~\ref{lem:ScaleMorphismUp}(iii), $|\suff_u(w^+)|_y = |\suff_v(h^+)|_y$ and $|\maxp_w(x)|_y = |\maxp_h(x)|_y$ for all $y \in \Sigma$, that is, if (iiib) holds.
    
    \item Similar to (iii).
    \end{enumerate}
    
    We now show that these conditions are sufficient. 
    Let $u \approx v$ be an identity satisfying the conditions (i)--(iv) and $\phi \colon \fu(\Sigma) \to \flad(a)$ be a morphism. 
    By Theorem~\ref{thm:flad}, (i) implies that $\phi(u)$ and $\phi(v)$ have the same number of trunk edges, that is $\rho_\phi(u) = \rho_\phi(v)$.
    So it suffices to show that $\phi(u)$ and $\phi(v)$ have the same branches. Throughout, we use $\rho$ to denote $\rho_\phi$ to simplify notation.
    

    For $0 \leq m \leq \rho(u) + 1$, let $\cB(m)$ be the statement that for all $0 \leq i < m$, $\phi(u)$ has a branch $i$ edges from the end node of length $\ell_i$ if and only if $\phi(v)$ has a branch $i$ edges from the end node of length $\ell_i$.
    Clearly, $\cB(0)$ is vacuously true so, for induction, assume that $\cB(m)$ holds for some $0 \leq m \leq \rho(u)$. We show that $\cB(m+1)$ holds.
    
    Clearly, if neither $\phi(u)$ nor $\phi(v)$ has a branch $m$ edges from the end vertex, we are done. First, suppose $\phi(u)$ has a branch, $B$, of length $\ell \geq 1$, $m$ edges from the end vertex.

    We proceed as follows. The branch $B$ must be `created' either by some letter $x \in \supp(u) \cap \Sigma$, or by some $w^+ \in \supp(u)$ for a $w \in \fm(\Sigma)$. Owing to retractions, the branch $B$ may indeed be created by multiple such letters or idempotents, or a combination of both types.
    
    We begin with the former case, that is, there exists $x \in \Sigma$ such that 
    \begin{enumerate}[(U1)]
        \item $\rho(\suff_u(x)) - \rho(x) \leq m \leq \rho(\suff_u(x))$,  \label{enu:notfromanxa}
        \item $\phi(x)$ has a length $\ell$ branch $m - (\rho(\suff_u(x)) - \rho(x))$ edges from the end node. \label{enu:notfromanxb}
    \end{enumerate}
    If $x$ satisfies (iib) then $|\suff_u(x)|_y = |\suff_v(x)|_y$ for all $y \in \Sigma$ and hence $\phi(v)$ has a length $\ell$ branch $m$ edges from the end node.
    Moreover, if the branch in $\phi(v)$ can retract, it may only retract into another branch at the same trunk vertex as the subtrees of $\phi(u)$ and $\phi(v)$, consisting of the final $m-1$ trunk edges and the branches leaving them are identical, and the branch $B$ does not strongly retract in $\phi(u)$. Hence, we have shown that $\phi(v)$ has a branch of the same length as $B$ located $m$ edges from the end vertex.
    
    So, instead, suppose $x$ satisfies (iia). Then, there exists $w^+ \in \supp_u(x)$ with $\rho(\suff_u(w^+)) \leq m$ and $x \in \supp(w)$. Note that $|\phi(w)^+| \geq |\phi(x)| \geq \ell$.
    Moreover, if either $|\phi(w)^+| > \ell$ or $\rho(\suff_u(w^+)) < m$, then $B$ strongly retracts, giving a contradiction.
    Therefore $|\phi(w)^+| = \ell$ and $\rho(\suff_u(w^+)) = m$, that is, $w^+$ also `creates' $B$. Thus, we have reduced to the case where there exists $w^+ \in \supp(u)$ that `creates' $B$.

    \par Now suppose that $B$ is created by some $w^+ \in \supp(u)$. Choose $w^+ \in \supp(u)$ to be a rightmost such word, i.e. choose some $w^+ \in \supp(u)$ with the property that $|\suff_u(w^+)| \leq |\suff_u(k^+)|$ for any $k^+$ satisfying $|\phi(k)^+| = \ell$ and $\rho(\suff_u(k^+)) = m$.

    By Lemma~\ref{lem:TrunksPref}, $\ell = |\phi(w^+)| = \rho(\maxp_w'(x)) + |\phi(x)^+|$
    for some $x \in \Sigma$. Define the following sets,
    \begin{align*}
        E &= \{t^+ \in \supp(u) \colon\ x \in \supp(t) \text{ and } \suff_u(w^+) = \suff_u(t^+)\}, \text{ and} \\
        W &= \{t^+ \in E \colon \rho(\maxp_t(x)) = \rho(\maxp_w(x))\}.
    \end{align*}
    Note that, if $t^+ \in W$, then $|\phi(t^+)| \geq \ell$. Since $B$ is a branch $m = \rho(\suff_u(w^+))$ edges from the end of length $\ell$, $w^+ \in W$. Moreover, as $B$ does not strongly retract, we must have $\phi(t^+) = \phi(w^+)$ and $\phi(t^+)$ `creates' $B$ for any $t^+ \in W$. Moreover $w^+ \in W$.
    
    We now claim there exists some $t^+ \in W$ such that (iiia) fails to hold with $x$. So, for a contradiction suppose: \begin{equation}\label{eq:iiiassume}\tag{$\dagger$}
        \text{Each }t^+ \in W\text{ satisfies (iiia) with }x.
    \end{equation}
    Let $t^+ \in W$ then, by (iiia), there exists $pk^+ \in R^u_{t,x}$ such that $\rho(\maxp_t(x)) \leq \rho(\maxp_{pk}(x))$ where we abuse notation by also using $\rho$ to denote its restriction to $\fm(\Sigma)$. Then, 
    \[ |\phi(t^+)| = \rho(\maxp_t'(x))+ |\phi(x)^+| \leq \rho(\maxp_{pk}'(x))+ |\phi(x)^+| \leq |\phi(pk^+)^+| \]
    where the first equality holds as $t^+ \in W$ and $\phi(t^+) = \phi(w^+)$.
    Thus, there is some path in $\phi(pk^+)$ of length at least as long as $\phi(t^+)$. Hence $\phi(t^+)\phi(pk^+) = \phi(pk^+)$.

    \textbf{Claim:} Let $t^+ \in W$ and $pk^+ \in R^{u}_{t,x}$ such that $\rho(\maxp_t(x)) \leq \rho(\maxp_{pk}(x))$, then we have $p = \varepsilon$. 

    \textit{Proof of Claim.} First suppose $\phi(p)$ is not an idempotent then, as $\phi(t^+)$ `creates' $B$ and $\phi(t^+)\phi(pk^+) = \phi(pk^+)$, the branch $B$ strongly retracts into $\phi(pk^+)$ and we have an immediate contradiction.
    Thus, $\phi(p)$ is idempotent. By Corollary \ref{cor:productofmonoidems}, we have either $\phi(pk^+) = \phi(p)$ or $\phi(pk^+) = \phi(k^+)$ -- we split into these cases.

    \begin{description}
        \item[Case 1, $\phi(pk^+) = \phi(p)$] Write $p = p_1p_2\cdots p_n$ for letters $p_i \in \Sigma$. Certainly $\phi(p) = \prod_{1 \leq i \leq n} \phi(p_i)$, and all such $\phi(p_i)$ are idempotent. By Corollary \ref{cor:productofmonoidems}, we have $\phi(p) = \phi(p_j)$ for some $1 \leq j \leq n$.
         
        As $B$ does not strongly retract and $\phi(w^+)\phi(p_j) = \phi(p_j)$, we get that  $|\phi(p_j)| = \ell$ and $\rho(\suff_u(p_j)) = m$, and hence $p_j$ satisfies \hyperref[enu:notfromanxa]{(U1)} and \hyperref[enu:notfromanxb]{(U2)}.
        Therefore, in a similar way to above, $p_j$ either satisfies (iib), giving a contradiction, or $p_j$ satisfies (iia) and there exists $k^+ \in \supp(u)$ appearing after $p_j$ with $|\phi(k^+)| = \ell$ and $\rho(\suff_u(k^+)) = m$. Since $\suff_u(w^+)$ contains $p_j$ but $\suff_u(k^+)$ does not, $|\suff_u(k^+)| < |\suff_u(w^+)|$. However, we chose $w^+$ rightmost -- we supposed $|\suff_u(w^+)| \leq |\suff_u(k^+)|$. We arrive at a contradiction.
        
        \item[Case 2, $\phi(pk^+) = \phi(k^+)$] Similarly to Case 1, we may deduce that $\phi(k^+)$ corresponds to the branch $B$. Recall that our choice of $w^+$ ensures that $|\suff_u(w^+)|_y \leq |\suff_u(k^+)|_y$ for all $y \in \Sigma$. Here though, for any letter $y \in \supp(p)$, $|\suff_u(w^+)|_y = |\suff_u(k^+)|_y + |p|_y > |\suff_u(k^+)|_y$, contradicting our choice of $w^+$. Thus, there is no $y \in \supp(p)$ and hence $p = \varepsilon$.
    \end{description}
    Therefore, $p = \varepsilon$ and we have shown the claim.

Now, let $\zeta \colon \fm(\Sigma) \to \N$ be the morphism obtained from Lemma~\ref{lem:ScaleMorphismUp} with $\rho$ and $L > |u|$, and define 
    \[W' = \{t^+ \in E \colon \zeta(\maxp_t(x)) \geq \zeta(\maxp_s(x)) \text{ for all } s^+ \in E \}.\]

    Recall that $\rho(\maxp_w(x)) \geq \rho(\maxp_s(x))$ for all $s^+ \in E$ as the length of $\phi(w)^+$ is $\ell = |\phi(w^+)| = \rho(\maxp_w(x)) + |\phi(x)^+|$. Then, for all $t^+ \in W$ and $s^+ \in E \setminus W$, $\rho(\maxp_t(x)) > \rho(\maxp_s(x))$ and hence, $\zeta(\maxp_t(x)) > \zeta(\maxp_s(x))$ by Lemma~\ref{lem:ScaleMorphismUp}(i). Thus, if $t^+ \in W'$ then $t^+ \notin E \setminus W$ and hence, $W' \subseteq W$.

Moreover, by Lemma~\ref{lem:ScaleMorphismUp}(ii), if $t^+ \in W$ and $pk^+ \in R^u_{t,x}$ are such that \linebreak $\zeta(\maxp_t(x)) \leq \zeta(\maxp_{pk}(x))$, then $\rho(\maxp_t(x)) \leq \rho(\maxp_{pk}(x))$. By the above claim, we thus have $p = \varepsilon$.

     Certainly $W' \neq \emptyset$ by the well-ordering principle. Let $t^+ \in W'$ and recall our assumption \eqref{eq:iiiassume}. As $W' \subseteq W$, $t^+$ satisfies (iiia) with $x$, and thus, there exists $k^+ \in R^u_{t,x}$ such that $\zeta(\maxp_{t}(x)) \leq \zeta(\maxp_k(x))$. 
     Then, by the definition of $W'$, we see that $k^+ \in W'$.
    Thus, $\zeta(\maxp_t(x)) = \zeta(\maxp_k(x))$ by the definition of $W'$ and hence, by Lemma~\ref{lem:ScaleMorphismUp}(iii), $|\maxp_t(x)|_y = |\maxp_k(x)|_y$ for all $y \in \Sigma$.
    However, this implies that $k^+ \notin R^u_{t,x}$, giving a contradiction to \eqref{eq:iiiassume}. Thus, there exists a $w^+ \in W$ such that $w^+$ satisfies (iiib) with $x$.

    By condition (iiib), there exists $h^+ \in \supp(v)$ with $|\suff_u(w^+)|_y = |\suff_v(h^+)|_y$ and $|\maxp_w(x)|_y = |\maxp_h(x)|_y$ for all $y \in \Sigma$. 
    Hence, $|\phi(h^+)| \geq \ell$ and, in $\phi(v)$, corresponds to a branch $m$ edges from the end node. Moreover, the branch is retract-free as $\cB(m)$ holds -- $\phi(u)$ and $\phi(v)$ are identical for the subtrees consisting of the final $m-1$ trunk edges and any branches leaving them.
    Thus, we have shown that if $\phi(u)$ has a retract-free branch, of length $\ell \geq 1$, $m$ edges from the end vertex, then $\phi(v)$ has a corresponding branch and it is at least as long.

    We now aim to use the dual of this argument to show the same for any branch in $\phi(v)$ which is $m$ trunk edges from the end vertex. Whilst (iii) has the assumed dual (iv), we require the dual of (ii) for $v$, which we show here.

    \par Let $x \in \supp(v) \cap \Sigma$ and suppose $|\suff_u(x)|_y \neq |\suff_v(x)|_y$ for some $y \in \Sigma$. We aim to show that there exists $h^+ \in \supp(\suff_v(x))$ with $x \in \supp(h)$, that is, prove the dual of (ii) for $v$.

    Clearly $x$ does not satisfy (iib), so $x$ satisfies (iia) and hence, there exists some $w^+ \in \supp(\suff_u(x))$ with $x \in \supp(w)$. Choose $w^+ \in \supp(u)$ to be the rightmost such element in $u$.
    
    If $w^+$ satisfies (iiia) with $x$, then for the morphism $\delta \colon \fm(\Sigma) \to \N$, where $\delta(x) = 1$ and $\delta(y) = 0$ for all $y \neq x$, then there exists $pk^+ \in R^u_{w,x}$ such that \[\delta(\maxp_w(x)) \leq \delta(\maxp_{pk}(x)).\]
    Since $w^+$ was chosen to be rightmost in $\suff_u(x)$ with $x \in \supp(w)$, for any such $pk^+$ we must have $p = \varepsilon$, as otherwise $|pk|_x = 0$ and $\delta(\maxp_{pk}(x)) = 0$.
    
    Now, let $\xi \colon \fm(\Sigma) \to \N$ be the morphism obtain from Lemma~\ref{lem:ScaleMorphismUp} using $\delta$ and $L > |u|$ and define the sets:
    \begin{align*}
        E &= \{t^+ \in \supp(u) \colon\ x \in \supp(t) \text{ and } \suff_u(w^+) = \suff_u(t^+)\}, \\
        W &= \{t^+ \in E \colon \delta(\maxp_t(x)) \geq \delta(\maxp_s(x)) \text{ for all } s^+ \in E\}, \text{ and}  \\
        W' &= \{t^+ \in E \colon \xi(\maxp_t(x)) \geq \xi(\maxp_s(x)) \text{ for all } s^+ \in E \}.
    \end{align*}

    Then by a similar method to above, one may show that there exists $t^+ \in W$ satisfying (iiib) with $x$.
    
    If instead $w^+$ satisfied (iiib) with $x$, we may proceed by taking $t := w$.
    
    So, let $h^+$ be the element of $\supp(v)$ whose existence is guaranteed by $t^+$ satisfying (iiib) with $x$.
    Then, $x \in \supp(h)$ and $|\suff_v(h^+)|_x = |\suff_u(t^+)|_x = 0$, and so $h^+ \in \supp(\suff_v(x))$. Hence, we have shown the $v$-dual statement to (ii).

    Finally, notice that the $v$-dual of (ii) together with the conditions (i-iv) are $u,v$ symmetric. 
    Thus, by a dual argument, we can show that if $\phi(v)$ has a length $\ell$ retract-free branch $B$, $m$ edges from the end vertex, then $\phi(u)$ has a corresponding branch at least as long.
    Therefore, since we have shown both directions, any branch that exists $m$ edges from the end vertex of one must also exist on the other. We have shown that $\cB(m+1)$ holds. Thus, by induction $\cB(\rho(u)+1)$ holds, that is, $\phi(u) = \phi(v)$ and $u \approx v$ is satisfied by $\flad(a)$.
\end{proof}

Using Proposition \ref{prop:ladradduality}, one may determine dual conditions to those in Theorem~\ref{thm:fladunaryidentities} for classifying enriched identities in $\frad(a)$.

We now briefly cover the identities satisfied by in non-monogenic cases. 
We remark here that the upcoming Proposition \ref{prop:NonMonogenicIdentitiesFlad} and Corollary \ref{cor:fladXidentities} remain true when replacing $\flad(X)$ with the biunary case of $\fad(X)$.

\begin{proposition} \label{prop:NonMonogenicIdentitiesFlad}
    The monoid $\flad(X)$ with $|X| \geq 2$ embeds (as a unary monoid) into $\flad(\{a,b\})$. Consequently, $\flad(X)$ satisfies the same enriched identities as $\flad(\{a,b\})$.
\end{proposition}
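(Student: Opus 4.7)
The plan is to construct an explicit unary monoid embedding $\iota \colon \flad(X) \hookrightarrow \flad(\{a,b\})$ (tacitly assuming $X$ is at most countable, which is necessary since $\flad(\{a,b\})$ is countable), and to derive the identities statement from this. Both directions of the identities equivalence will follow by transferring identities along embeddings: any injection $\{a,b\} \hookrightarrow X$ induces, via freeness of $\flad(\{a,b\})$, a unary monoid morphism $\flad(\{a,b\}) \to \flad(X)$ which is injective at the level of tree representatives (relabelling edges by an injection cannot create new retractions, since retractions must respect labels); combined with $\iota$, this yields that $\flad(X)$ and $\flad(\{a,b\})$ satisfy the same enriched identities.

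To build $\iota$, enumerate $X$ by a subset of $\N$ and write $k_x$ for the index of $x \in X$. Define the gadget $G_x \in \flad(\{a,b\})$ to be the trunk-only tree associated with the word $a\,b^{k_x+1}\,a$. Each $G_x$ is a simple directed path and is therefore automatically retract-free, and distinct gadgets are pairwise non-isomorphic. By freeness of $\flad(X)$ (Theorem~\ref{thm:flad}), the assignment $x \mapsto G_x$ extends uniquely to a unary monoid morphism $\iota \colon \flad(X) \to \flad(\{a,b\})$.

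For injectivity of $\iota$, I would work with the tree description of Theorems~\ref{thm:ut} and~\ref{thm:flad}. Given $T \in \flad(X)$ in retract-free form, let $\widetilde{T} \in \mathrm{LUT}^1(\{a,b\})$ be the $\{a,b\}$-tree obtained by substituting each oriented $x$-edge of $T$ by a copy of $G_x$; then $\iota(T)$ equals the retract-free reduction of $\widetilde{T}$ in $\flad(\{a,b\})$. The key claim is that $\widetilde{T}$ is already retract-free. Granted this, $\iota(T) = \widetilde{T}$, and the original vertices of $T$ are identifiable inside $\widetilde{T}$ as precisely those vertices incident to no $b$-edge, with the label of each original edge read off from the number of intermediate $b$-edges along the unique gadget-path joining its endpoints; hence $T$ is recovered uniquely from $\iota(T)$, and $\iota$ is injective.

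The principal obstacle will be establishing this key claim. My strategy is a rigidity/propagation argument starting from the fixed start vertex: for any retraction $\psi$ of $\widetilde{T}$, one follows edges outward and exploits the distinctive $a\,b^{k+1}\,a$ pattern of each gadget (in which the number of interior $b$-edges uniquely determines the gadget) to force $\psi$ to send each gadget isomorphically onto a gadget of the same type rooted at the image of its source vertex. This will imply that $\psi$ sends original vertices to original vertices, so it induces a well-defined graph endomorphism of $T$, which is a retraction of $T$ and hence trivial by retract-freeness of $T$. It then follows that $\psi$ itself is the identity, completing the proof of the claim and therefore of the proposition.
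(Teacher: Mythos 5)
Your proposal is correct and follows essentially the same route as the paper: the paper defines the embedding on generators by $x_i \mapsto ba^ib$ (your $x \mapsto ab^{k_x+1}a$ is the same gadget idea with the roles of $a$ and $b$ swapped), leaves injectivity as "one may verify", and obtains the identities statement from the two mutual embeddings exactly as you do. Your retract-freeness/propagation sketch is a sound way to carry out the verification the paper omits, and your countability caveat is a reasonable (implicit in the paper, which takes $X$ finite) refinement rather than a deviation.
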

\begin{proof}
    Let $X = \{x_i \colon i \in [r-1]\}$. We construct our desired embedding -- one may verify that the morphism $\phi \colon \flad(X) \to \flad(\{a,b\})$ extending $x_i \mapsto ba^ib$ for all $i \in [r-1]$ is suitable.
    
    It follows that any identity satisfied by $\flad(\{a,b\})$ is satisfied by $\flad(X)$. Clearly $\flad(\{a,b\})$ embeds in $\flad(X)$ and hence the converse also holds.
\end{proof}

\begin{corollary}\label{cor:fladXidentities}
    Let $u, v \in \fu(\Sigma)$. Then, the identity $u \approx v$ is satisfied by $\flad(X)$ with $|X| \geq 2$ if and only if $u$ and $v$ represent the same element of $\flad(\Sigma)$.
\end{corollary}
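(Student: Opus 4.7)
The plan is to deduce this corollary directly from the freeness of $\flad(\Sigma)$ as a left adequate monoid together with Proposition~\ref{prop:NonMonogenicIdentitiesFlad}; there is no new combinatorics of trees to do here. Let $q \colon \fu(\Sigma) \twoheadrightarrow \flad(\Sigma)$ denote the canonical quotient of unary monoids.

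For the $(\Leftarrow)$ direction, I will take an arbitrary unary monoid morphism $\phi \colon \fu(\Sigma) \to \flad(X)$ and show $\phi(u) = \phi(v)$. The restriction $\phi|_\Sigma \colon \Sigma \to \flad(X)$ extends uniquely to a left adequate monoid morphism $\bar\phi \colon \flad(\Sigma) \to \flad(X)$ by the universal property of $\flad(\Sigma)$. Since $\bar\phi \circ q$ and $\phi$ are unary monoid morphisms $\fu(\Sigma) \to \flad(X)$ agreeing on the free generating set $\Sigma$, they coincide, so $\phi(u) = \bar\phi(q(u)) = \bar\phi(q(v)) = \phi(v)$.

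For the $(\Rightarrow)$ direction, I need to show $q(u) = q(v)$ given that $\flad(X)$ satisfies $u \approx v$. If $|\Sigma| \geq 2$, Proposition~\ref{prop:NonMonogenicIdentitiesFlad} gives that $\flad(\Sigma)$ satisfies exactly the same enriched identities as $\flad(\{a,b\})$ and hence as $\flad(X)$; applying the identity $u \approx v$ to the morphism $q$ itself yields $q(u) = q(v)$ at once. The one point requiring extra attention is the low-rank case $|\Sigma| \leq 1$, to which Proposition~\ref{prop:NonMonogenicIdentitiesFlad} does not apply. Here I would instead observe that $\flad(\Sigma)$ embeds as a unary submonoid of $\flad(X)$: trivially when $|\Sigma| = 0$, and when $|\Sigma| = \{a\}$ via the morphism extending $a \mapsto x$ for any generator $x \in X$ (injectivity follows from the tree description of $\flad$, since an $a$-tree is retract-free precisely when the corresponding single-letter $X$-tree is). Composing $q$ with this embedding $\iota$ gives a morphism $\fu(\Sigma) \to \flad(X)$ mapping $u$ and $v$ to the same element by hypothesis, and injectivity of $\iota$ then forces $q(u) = q(v)$.

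There is no real technical obstacle: the content has been done in Proposition~\ref{prop:NonMonogenicIdentitiesFlad}, and the corollary is mostly bookkeeping around the universal property. The only mildly delicate point, worth flagging in the write-up, is the $|\Sigma| \leq 1$ case, which sits outside the hypotheses of Proposition~\ref{prop:NonMonogenicIdentitiesFlad} and has to be handled directly via the embedding.
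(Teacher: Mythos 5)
Your proposal is correct and matches the paper's argument: the $(\Leftarrow)$ direction factors $\phi$ through the canonical quotient $\fu(\Sigma) \twoheadrightarrow \flad(\Sigma)$ via the universal property, and the $(\Rightarrow)$ direction uses that $\flad(\Sigma)$ embeds in (equivalently, satisfies the same identities as) $\flad(X)$, by Proposition~\ref{prop:NonMonogenicIdentitiesFlad} when $|\Sigma| \geq 2$ and directly in the low-rank case. Your explicit treatment of $|\Sigma| \leq 1$ just fills in a step the paper dismisses with ``either since $|\Sigma| = 1$ or by Proposition~\ref{prop:NonMonogenicIdentitiesFlad}''.
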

\begin{proof}
    Consider any morphism $\phi \colon \fu(\Sigma) \to \flad(X)$. Then $\phi$ is the unique morphism from $\fu(\Sigma)$ extending the function $\phi|_\Sigma \colon \Sigma \to \flad(X)$.

    \par Now, let $\psi \colon \flad(\Sigma) \to \flad(X)$ be the unique morphism extending $\phi|_\Sigma$ and $\pi \colon \fu(\Sigma) \to \flad(\Sigma)$ be the surjective morphism extending the trivial embedding of $\Sigma$. Note that $\phi$ and $\psi \circ\pi$ agree on $\Sigma$ so, $\phi(x) = \psi(\pi(x))$ for all $x \in \fu(\Sigma)$.

    Clearly, if $\pi(u) = \pi(v)$ then $\phi(u) = \psi(\pi(u)) = \psi(\pi(v)) = \phi(v)$, that is, $u \approx v$ is satisfied by $\flad(X)$.

    Conversely, if $\pi(u) \neq \pi(v)$ then $u \approx v$ is not satisfied by $\flad(\Sigma)$. Note that $\flad(\Sigma)$ embeds in $\flad(X)$ (either since $|\Sigma| = 1$ or by Proposition~\ref{prop:NonMonogenicIdentitiesFlad}), and hence $u \approx v$ is not satisfied by $\flad(X)$. Thus, we are done.
\end{proof}

Thus, by the above corollary, there is an efficient algorithm to check if an identity is satisfied by $\flad(X)$ for $|X| \geq 2$, namely, constructing the corresponding trees for both $u$ and $v$ in $\flad(\Sigma)$ and determining equality. We refer the reader to \cite{KAM4} for a discussion on complexity.

\subsection{Monoid identities}
In this subsection, we classify the monoid identities satisfied by the monoid reducts of $\flad(a)$, $\frad(a)$, and $\fad(a)$.

We say an identity $u \approx v$ is a \emph{monoid identity} if $u,v \in \fm(\Sigma)$ and say a monoid $\cM$ \emph{satisfies} $u \approx v$ if $\phi(u) = \phi(v)$ for all monoid morphisms $\phi \colon \fm(\Sigma) \to \cM$. 

Note that, for $u,v \in \fm(\Sigma)$ and a unary monoid $(M,{}^+)$, the monoid identity $u \approx v$ is satisfied by the monoid reduct $M$ if and only if the identity $u \approx v$ is satisfied by $(M,{}^+)$. Hence, by Theorem~\ref{thm:fladunaryidentities}, we obtain the following corollary.
\begin{corollary} \label{cor:fladidentities}
    Let $u \approx v$ be a monoid identity over $\Sigma$. Then, $u \approx v$ is satisfied by $\flad(a)$ if and only if
    $|u|_y = |v|_y$ and $|\suff_u(x)|_y = |\suff_v(x)|_y$ for all $x,y \in \Sigma$.
\end{corollary}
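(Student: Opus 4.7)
The plan is to derive this statement directly from Theorem~\ref{thm:fladunaryidentities}, observing how the four conditions of that theorem simplify when $u, v \in \fm(\Sigma)$ contain no occurrences of the unary operation $+$. The key remark, already recorded in the paragraph preceding the statement, is that a pure monoid identity is satisfied by the monoid reduct of $\flad(a)$ if and only if it is satisfied by $(\flad(a),{}^+)$; so I only need to specialise the enriched classification.

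First, since $u$ and $v$ lie in $\fm(\Sigma)$, they are trivially non-nested, so Theorem~\ref{thm:fladunaryidentities} applies as stated. Moreover, $\supp(u) \subseteq \Sigma$ and $\supp(v) \subseteq \Sigma$; in particular, there are no elements of $\fmp$ in $\supp(u)$ or $\supp(v)$. Thus the quantifier ``for all $w^+ \in \supp(u)$'' in condition (iii), and the corresponding quantifier in condition (iv), are vacuous, and conditions (iii) and (iv) hold automatically. This reduces matters to conditions (i) and (ii).

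Condition (i) of Theorem~\ref{thm:fladunaryidentities} is exactly the first part of the claimed equivalence, namely $|u|_y = |v|_y$ for all $y \in \Sigma$. For condition (ii), I would fix $x \in \supp(u) \cap \Sigma$ and note that $\supp(\suff_u(x)) \subseteq \Sigma$ contains no element of $\fmp$. Hence sub-option (iia) (existence of some $w^+ \in \supp(\suff_u(x))$ with $x \in \supp(w)$) is impossible, so condition (ii) forces (iib): $|\suff_u(x)|_y = |\suff_v(x)|_y$ for all $y \in \Sigma$. This yields the second stated equality.

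There is one small bookkeeping point to address rather than a genuine obstacle: the theorem's condition (ii) is stated only for $x \in \supp(u) \cap \Sigma$, but for the monoid identity to be symmetric in $u$ and $v$ one might worry about the dual quantifier over $\supp(v) \cap \Sigma$. However, condition (i) gives $|u|_y = |v|_y$ for all $y \in \Sigma$, whence $\supp(u) \cap \Sigma = \supp(v) \cap \Sigma$, so the conditions obtained are automatically symmetric. Conversely, given the two stated equalities, conditions (i) and (ii) of Theorem~\ref{thm:fladunaryidentities} hold and (iii), (iv) are vacuous, so the identity is satisfied by $\flad(a)$. This closes both directions of the equivalence.
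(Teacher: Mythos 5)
Your proposal is correct and follows exactly the paper's route: the paper obtains this corollary directly from Theorem~\ref{thm:fladunaryidentities} together with the observation that a monoid identity holds in the reduct if and only if it holds in $(\flad(a),{}^+)$, and your specialisation (conditions (iii)--(iv) vacuous, (iia) impossible, so (i) and (iib) remain) simply makes explicit the routine details the paper leaves unstated.
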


For $w \in \fm(\Sigma)$ and $x \in \Sigma$, let $\pref_w(x)$ be the shortest prefix of $w$ containing $x$ if $x \in \supp(w)$, and be equal to $w$ otherwise. Now, we get the following corollary for $\frad(a)$ by taking the dual to Corollary~\ref{cor:fladidentities}.
\begin{corollary} \label{cor:fradidentities}
    Let $u \approx v$ be a monoid identity over $\Sigma$. Then, $u \approx v$ is satisfied by $\frad(a)$ if and only if
    $|u|_y = |v|_y$ and $|\pref_u(x)|_y = |\pref_v(x)|_y$ for all $x,y \in \Sigma$.
\end{corollary}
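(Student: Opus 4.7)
The plan is to derive this corollary directly from Corollary~\ref{cor:fladidentities} via the anti-isomorphism of Proposition~\ref{prop:ladradduality}, rather than by reproving the entire enriched identity classification in the right-adequate setting.

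The key observation is the following reversal principle. For $w \in \fm(\Sigma)$, let $\widetilde{w}$ denote the reverse of $w$. If $\alpha \colon \flad(a) \to \frad(a)$ is the $(2,1,0)$-anti-isomorphism from Proposition~\ref{prop:ladradduality}, then for any monoid morphism $\phi \colon \fm(\Sigma) \to \frad(a)$, the assignment $x \mapsto \alpha^{-1}(\phi(x))$ extends to a monoid morphism $\phi' \colon \fm(\Sigma) \to \flad(a)$ with the property $\phi'(\widetilde{w}) = \alpha^{-1}(\phi(w))$ for all $w \in \fm(\Sigma)$, since reversing a word converts a homomorphism into the image of an anti-homomorphism. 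Consequently $\phi(u) = \phi(v)$ if and only if $\phi'(\widetilde{u}) = \phi'(\widetilde{v})$, and taking $\phi$ (resp.\ $\phi'$) over all such morphisms yields that $\frad(a)$ satisfies the monoid identity $u \approx v$ if and only if $\flad(a)$ satisfies the monoid identity $\widetilde{u} \approx \widetilde{v}$.

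Next I would translate the conditions. Clearly $|\widetilde{w}|_y = |w|_y$ for all $y \in \Sigma$, so the first condition of Corollary~\ref{cor:fladidentities} applied to $\widetilde{u} \approx \widetilde{v}$ reads $|u|_y = |v|_y$. For the second, note that the shortest suffix of $\widetilde{w}$ containing $x$ is exactly the reverse of the shortest prefix of $w$ containing $x$, that is, $\suff_{\widetilde{w}}(x) = \widetilde{\pref_w(x)}$; hence $|\suff_{\widetilde{w}}(x)|_y = |\pref_w(x)|_y$ for all $x,y \in \Sigma$. Applying Corollary~\ref{cor:fladidentities} to $\widetilde{u} \approx \widetilde{v}$ therefore gives precisely the stated conditions $|u|_y = |v|_y$ and $|\pref_u(x)|_y = |\pref_v(x)|_y$ for all $x,y \in \Sigma$.

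The only potential pitfall is making sure the anti-isomorphism argument works cleanly at the level of plain monoid identities (we do not need to track the unary operation, since monoid identities involve no $+$ or $\ast$), and checking that the reversal bijection on $\fm(\Sigma)$ really does send monoid morphisms to monoid morphisms when composed with $\alpha^{\pm1}$. Both are standard and essentially one-line verifications, so no serious obstacle is expected; the bulk of the work has already been done in Theorem~\ref{thm:fladunaryidentities} and Corollary~\ref{cor:fladidentities}.
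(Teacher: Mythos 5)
Your proposal is correct and follows exactly the route the paper intends: the paper's ``proof'' of this corollary is simply the sentence ``we get the following corollary for $\frad(a)$ by taking the dual to Corollary~\ref{cor:fladidentities},'' and your argument supplies the details of that dualisation — the reversal bijection on words, the observation that composing with $\alpha^{\pm 1}$ turns anti-homomorphisms into homomorphisms and hence sets up a bijection between morphisms $\fm(\Sigma) \to \frad(a)$ and $\fm(\Sigma) \to \flad(a)$ under which $u \approx v$ corresponds to $\widetilde{u} \approx \widetilde{v}$, and the translations $|\widetilde{w}|_y = |w|_y$ and $\suff_{\widetilde{w}}(x) = \widetilde{\pref_w(x)}$. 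This is the same approach as the paper, just written out.
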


We say a monoid identity $u_1 \approx v_1$ is a \emph{consequence} of a monoid identity $u_2 \approx u_2$ if every monoid that satisfies $u_1 \approx v_1$ also satisfies $u_2 \approx u_2$.
\begin{proposition}[{\cite[Theorem~4.16 \& 4.17]{CMRsylvesteridentities}}]\label{prop:identityconditions}
    A monoid identity $u \approx v$ over $\Sigma$ is a consequence of the monoid identity
    \begin{itemize}
        \item $xyzxty \approx yxzxty$ if and only if $|u|_y = |v|_y$ and $|\suff_u(x)|_y = |\suff_v(x)|_y$ for all $x,y \in \Sigma$,
        \item $xzytxy \approx xzytyx$ if and only if $|u|_y = |v|_y$ and $|\pref_u(x)|_y = |\pref_v(x)|_y$ for all $x,y \in \Sigma$.
    \end{itemize}
\end{proposition}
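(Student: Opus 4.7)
The two bullet points are exchanged by reversing the order of letters in each word, so it suffices to prove the first statement and obtain the second by this duality. The argument splits along the ``if and only if''.

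For the necessity direction, my strategy is to invoke the paper's own results. A direct check using Corollary~\ref{cor:fladidentities} shows that $\flad(a)$ satisfies $xyzxty \approx yxzxty$: both sides have the same content, and the shortest suffixes $\suff(x) = xty$, $\suff(y) = y$, $\suff(z) = zxty$, $\suff(t) = ty$ coincide for the two words. Hence $\flad(a)$ lies in the variety defined by $xyzxty \approx yxzxty$, so every consequence of this identity is satisfied by $\flad(a)$. A second application of Corollary~\ref{cor:fladidentities} then forces $|u|_y = |v|_y$ and $|\suff_u(x)|_y = |\suff_v(x)|_y$ for all $x,y \in \Sigma$, as required.

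For the sufficiency direction, the argument is combinatorial. Let $\sim$ be the monoid congruence on $\fm(\Sigma)$ generated by substitution instances of $xyzxty \approx yxzxty$. I would proceed in three steps. First, a short direct calculation verifies that every single-step rewrite $u \sim u'$ preserves both $|\cdot|_y$ and $|\suff_{\cdot}(x)|_y$. Second, I would associate to each word a canonical representative, analogous to the right-to-left binary-search-tree normal form familiar from the sylvester monoid. Third, I would show that every word rewrites to its canonical form via substitution instances of the identity, and that this canonical form is a function purely of the data $(|u|_y, |\suff_u(x)|_y)_{x,y \in \Sigma}$. Combining these, two words with matching content and suffix statistics rewrite to the same canonical representative, and hence are $\sim$-equivalent.

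The main obstacle is the third step, namely showing (a) that the rewriting actually reaches the canonical form from every word, and (b) that the canonical form depends only on the statistics. For (a) one needs a confluence-style argument, decreasing some natural inversion statistic under each application of the identity; for (b) one observes that the last letter of any word is pinned down by the statistics (it is the unique $x$ with $|\suff_u(x)|_x = 1$ and $|\suff_u(x)|_y = 0$ for $y \neq x$), and the remaining letters can be recovered inductively by peeling from the right while tracking the well-controlled update to the suffix counts. Once this reconstruction is established, invariance of the statistics under $\sim$ together with the existence of a canonical representative in each class yields the sufficiency half and thereby the proposition.
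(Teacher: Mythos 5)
The paper does not prove this proposition; it is cited verbatim from Cain, Malheiro, and Ribeiro (their Theorems 4.16 and 4.17), so there is no in-paper proof to compare against. Your attempt must therefore be judged on its own merits.

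Your duality reduction from the second bullet to the first is sound (reversing words interchanges $\suff$ and $\pref$, and the reverse of $xzytxy \approx xzytyx$ is, after renaming variables, $xyzxty \approx yxzxty$). Your necessity argument is correct and economical: by Corollary~\ref{cor:fladidentities}, $\flad(a)$ satisfies $xyzxty \approx yxzxty$, so any consequence $u \approx v$ holds in $\flad(a)$, and a second application of Corollary~\ref{cor:fladidentities} extracts exactly the stated numerical conditions. This is logically valid since Corollary~\ref{cor:fladidentities} is proved independently of Proposition~\ref{prop:identityconditions} (via Theorem~\ref{thm:fladunaryidentities}), although note it inverts the paper's intended flow, which uses the proposition to draw conclusions about $\flad(a)$; the cited source instead uses the sylvester monoid as the witness model.

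The sufficiency direction is where essentially all the substance of the cited result lies, and your proposal leaves it at the level of a plan. You correctly identify the right shape of argument, namely rewriting every word to a sylvester-style normal form using only substitution instances of the identity and showing the normal form is determined by the content and suffix statistics, but neither half of your ``step 3'' is established. For (a), a decreasing ``inversion statistic'' is plausible but you must exhibit one, show it strictly decreases under every applicable rewrite, and show the unique minimum is the normal form. For (b), knowing the last letter is determined by the statistics does not license a naive inductive peeling: deleting the last letter of $u$ does not transform the family $(|\suff_u(x)|_y)$ in any simple way, and since the statistics do \emph{not} determine the word (for instance $abab$ and $baab$ share content and all suffix statistics), the induction must recover the \emph{canonical representative}, not the word itself, which requires already knowing what the canonical form is. These two points are precisely the content of the cited theorems, proved there via the right-to-left binary search tree insertion algorithm and a detailed analysis of the sylvester congruence; as written, your sufficiency half is a sketch, not a proof.
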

By the above proposition and Corollaries~\ref{cor:fladidentities} and \ref{cor:fradidentities}, all the monoid identities satisfied by $\flad(a)$ and $\frad(a)$ are consequences of a single monoid identity. Moreover, by \cite[Corollary~4.7]{CMRsylvesteridentities}, we can see that $\flad(a)$ and $\frad(a)$ generate the same monoid varieties as the \textit{sylvester} and \textit{\#-sylvester} monoid, respectively. We refer the reader to \cite[Section 6]{AIR} for other monoids satisfying exactly the same monoid identities as $\flad(a)$.

We close by determining the monoid identities satisfied by $\fad(a)$.

\begin{proposition}
    Let $u \approx v$ be a monoid identity over $\Sigma$. Then $u \approx v$ is satisfied by $\fad(a)$ if and only if $u = v$.
\end{proposition}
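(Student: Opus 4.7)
The plan is to show that $\fad(a)$ contains a free monoid of every finite rank, from which the proposition follows immediately. Indeed, once a free monoid of rank $|\Sigma|$ is identified inside $\fad(a)$, the assignment of $\Sigma$ to a free generating set gives an injective monoid morphism $\phi \colon \fm(\Sigma) \to \fad(a)$, and so any monoid identity $u \approx v$ satisfied by $\fad(a)$ forces $\phi(u) = \phi(v)$ and hence $u = v$ by freeness.

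To construct a free submonoid of rank $r$, the guiding idea is to assign each letter $x_i$ a tree $T_i \in \fad(a)$ whose structural features survive every product. The crucial fact one exploits is that in any product $T_{i_1} \cdots T_{i_k}$, the trunk is the unique directed path from start to end, so it is fixed pointwise by every retraction; consequently the branches at each trunk vertex remain there, and the retract-free form of the product is determined by the interaction of these branches. A natural first attempt is to take $T_i := (a^{N_i})^{*} \cdot a \cdot (a^{N_i})^{+}$ for a rapidly growing sequence of positive integers $N_i$. Unfortunately this fails: in a product, adjacent outgoing (resp.\ incoming) branches absorb each other along trunk paths, so that after retraction only the strict right-maxima (resp.\ left-maxima) of the sequence $(i_1, \dots, i_k)$ remain visible, and distinct sequences with matching maxima signatures (for instance $(2,1,2)$ and $(2,2,2)$) are identified.

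The main obstacle is therefore to design branches that resist this absorption phenomenon. I expect the cleanest route is to replace the linear branches above by retract-free zig-zag idempotents from the family $Z(n,i)$ of Section~\ref{sec:growth}: a zig-zag branch with mixed orientations cannot be absorbed into a straight-line trunk segment, because the retraction would have to match each $+/\ast$ alternation against trunk edges that all point the same way. Concretely, one takes $T_i = a \cdot Z_i \cdot a$ for carefully chosen pairwise ``incompatible'' retract-free zig-zag idempotents $Z_i$. The verification then proceeds in three steps: (i) note that the trunk vertices of $T_{i_1} \cdots T_{i_k}$ are fixed pointwise by every retraction (immediate, from the trunk being the unique directed start-to-end path); (ii) show that each zig-zag signature at a trunk vertex is not absorbed by retractions into the trunk or into zig-zag signatures at other trunk vertices, by an edge-by-edge analysis of the alternating orientations using the techniques of Section~\ref{sec:PropsOfMonAde} and Section~\ref{sec:growth}; (iii) conclude that the retract-free form of $T_{i_1} \cdots T_{i_k}$ uniquely determines $(i_1, \dots, i_k)$. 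Step (ii) is the main technical burden, and is the step where the specific structure of the zig-zag patterns must be delicately exploited to rule out all possible non-trivial retractions across trunk positions.
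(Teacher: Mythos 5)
There is a genuine gap, and it sits exactly where you place the ``main technical burden'': step (ii) is not only unproven, the heuristic you offer for it is false. A retraction is an arbitrary idempotent label- and orientation-preserving graph morphism; it need not be injective, so it does not have to ``match each alternation against trunk edges that all point the same way''. Concretely, if $Z$ is a purely alternating zig-zag attached at an interior trunk vertex $v$ with successor $v'$, the map sending every even-distance vertex of $Z$ to $v$ and every odd-distance vertex to $v'$ sends \emph{every} edge of $Z$ to the single trunk edge $v \to v'$, preserving orientation; extended by the identity elsewhere it is a retraction that erases the branch entirely. More generally, any finite $a$-labelled branch admits a morphism into a long directed path (map each vertex according to its displacement along the branch), so once the trunk extends far enough on both sides of the attachment vertex, \emph{every} interior branch of bounded size folds into the trunk. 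This has two consequences for your plan: first, no choice of fixed zig-zags $Z_i$ (from $Z(n,i)$ or otherwise) makes $w \mapsto T_{i_1}\cdots T_{i_k}$ injective on arbitrarily long words, so the stronger claim that $\fad(a)$ contains a free monoid of rank $|\Sigma|$ is not obtainable by this route (and is not needed); second, step (iii) fails for long products even granting your step (i), which is the only part of the outline that is correct as stated.

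The paper's proof avoids both problems by separating only the two given words: it sets $\phi(\sigma_i) = (a(a^{N_i})^*)^+a$ with pairwise distinct $N_i > \max(|u|,|v|)$, so each trunk vertex carries a branch consisting of an outgoing edge followed by a directed path of length $N_i$ pointing \emph{towards} the branch tip. Such a branch cannot fold: any absorption would require a backwards-directed walk of length $N_i$ in the image tree, but the only backwards-directed paths available run along the trunk, have length at most $|w| < N_i$, and terminate at the start vertex, which has in-degree zero. Hence no branch retracts, the word is read off from the branch lengths at consecutive trunk vertices, and $\phi(u)=\phi(v)$ forces $u=v$. Note the mechanism is a length comparison (a directed run longer than anything it could map onto), not mixed orientation, and the parameters must be allowed to depend on the identity being tested; your correct diagnosis of why the naive choice $(a^{N_i})^*a(a^{N_i})^+$ fails does not transfer to a proof that your replacement succeeds.
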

\begin{proof}
    Let $\Sigma = \{\sigma_1,\dots,\sigma_n\}$, $w \in \fm(\Sigma)$, and $N_i > |w|$ for $1 \leq i \leq n$ with $N_i \neq N_j$ for all $i \neq j$. Define the morphism $\phi \colon \fm(\Sigma) \to \fad(a)$ by mapping $\sigma_i$ to 
    $(a(a^{N_i})^*)^+a$. The image of $\sigma_i$ under $\phi$ is given below as a retract-free tree where we omit the edge-labelling by $a$.
    \[\begin{tikzpicture}
            \setup
            \node (A) at ( 0,0) {\large$+$};
            \Vertex[x=0,y=1]{U}
            \Vertex[x=-1,y=1]{L1}
            \Vertex[x=-2,y=1]{L2}
            \Vertex[x=-3,y=1]{L3}
            \node (R1) at (1,0) {\large$\times$};
            
            \draw[dashed,-] (L2) -- (L1) [dashed] node{};
            \Edge(A)(U)\draw (A) -- (U) node {};
            \Edge(L1)(U)\draw (L1) -- (U) node {};
            \Edge(L3)(L2)\draw (L3) -- (L2) node {};
            \Edge(A)(R1)\draw (A) -- (R1) node {};
            \draw [decorate,decoration={brace,raise=2mm,amplitude=3pt,mirror}] (U) -- (L3) node [font=\small, above, pos=.5, yshift=3mm] {$N_i$};
            
        \end{tikzpicture}\]
As we chose $N_i> |w|$, no branches in $\phi(w)$ can retract. Thus, we are able to reconstruct $w$ from $\phi(w)$, by looking at the length of each branch as $N_i \neq N_j$ for all $i \neq j$.

    \par Therefore, for any monoid identity $u \approx v$, choosing each $N_i > \max(|u|,|v|)$ with $N_i \neq N_j$ for any $i \neq j$, we get that $\phi(u) = \phi(v)$ if and only if $u = v$.
\end{proof}

\bibliographystyle{plain}

\begin{thebibliography}{10}


\bibitem{AIR}
T.~Aird and D.~Ribeiro.
\newblock Lattices of varieties of plactic-like monoids.
\newblock {\em Semigroup Forum}, 109:3–37, 2024.

\bibitem{AND}
D.~Andr\'{e}.
\newblock  Solution directed du probl\`{e}me r\'{e}solu par M. Bertrand.
\newblock {\em Comptes rendus de l'{A}cad\'{e}mie des Sciences}, 105:436--437, 1887.
\newblock In {F}rench.

\bibitem{BAT} A.~Batbedat.
\newblock Les demi-groupes idunaires ou gamma-demi-groupes. 
\newblock {\em Semigroup Forum}, 25:185--191, 1982.
\newblock In {F}rench.

\bibitem{BER}
J.~Bertrand.
\newblock Solution d'un probl\`{e}me.
\newblock {\em Comptes rendus de l'{A}cad\'{e}mie des Sciences}, 105:369, 1887.
\newblock In {F}rench.

\bibitem{BRA}
M.~J.~J. Branco, G.~M.~S.~Gomes and V.~Gould.
\newblock Ehresmann monoids.
\newblock {\em J. Algebra}, 443:349--382, 2015.

\bibitem{BRA2}
M.~J.~J. Branco, G.~M.~S.~Gomes, V.~Gould and Y.~Wang.
\newblock Ehresmann monoids: {A}dequacy and expansions.
\newblock {\em J. Algebra}, 513:344--367, 2018.

\bibitem{BUR}
S.~Burris and H.~P.~Sankappanavar.
\newblock {\em A course in universal algebra}, volume~78 of {\em Graduate Texts in Mathematics}.
\newblock Springer New York, NY, 1981.

\bibitem{CKKMOPlactic}
A.~J.~Cain, G.~Klein, {\L}.~Kubat, A.~Malheiro and J.~Okni{\'n}ski.
\newblock A note on identities in plactic monoids and monoids of upper-triangular tropical matrices.
\newblock Preprint. \href{https://arxiv.org/abs/1705.04596}{arXiv:1705.04596}, 2017.

\bibitem{CMRsylvesteridentities}
A.~J.~Cain, A.~Malheiro and D.~Ribeiro.
\newblock Identities and bases in the sylvester and {B}axter monoids.
\newblock {\em J. Algebraic Comb.}, 58(4):933--966, 2023.



\bibitem{DJKUpperTriTrop}
L.~Daviaud, M.~Johnson and M.~Kambites.
\newblock Identities in upper triangular tropical matrix semigroups and the bicyclic monoid.
\newblock {\em J. Algebra}, 501:503--524, 2018.

\bibitem{FOU}
J.~Fountain.
\newblock A class of right {PP} monoids.
\newblock {\em Quart. J. Math. Oxford Ser.}, 28(111):285--300, 1977.

\bibitem{FOU2}
J.~Fountain.
\newblock Adequate semigroups.
\newblock {\em Proc. Edinburgh Math. Soc. (2)}, 22(2):113--125, 1979.

\bibitem{FOU5} J.~Fountain.
\newblock Free right h-adequate semigroups.
\newblock In \textit{Semigroups theory and applications}, Lecture notes in mathematics Vol. 1320, Springer, Berlin, 1988.

\bibitem{FOU3}
J.~Fountain.
\newblock Free right type {A} semigroups.
\newblock {\em Glasgow Math. J.}, 33(2):135--148, 1991.

\bibitem{FOU4}
J.~Fountain, G.~M.~S.~Gomes and V.~Gould.
\newblock The free ample monoid.
\newblock {\em Int. J. Alg. Comp.}, 19(4):527--554, 2009.

\bibitem{GOU}
V.~Gould.
\newblock Graph expansions of right cancellative monoids.
\newblock {\em Int. J. Alg. Comp.}, 6:713--733, 1996.

\bibitem{GOU2}
V.~Gould and M.~Johnson.
\newblock Forbidden configurations for coherency.
\newblock Preprint. \href{https://arxiv.org/abs/2506.11321}{arXiV:2506.11321}, 2025.

\bibitem{HAR}
G.~H.~Hardy and S.~Ramanujan.
\newblock Asymptotic formulae in combinatory analysis.
\newblock {\em Proc. London Math. Soc. (2)}, 17:75--115, 1918.

\bibitem{HEA}
D.~Heath, M.~Kambites and N.~Szak\'acs.
\newblock Pretzel monoids.
\newblock {\em Int. J. Alg. Comp.}, 35(5):647--684, 2025.

\bibitem{HEL}
P.~Hell and J.~Nešetřil.
\newblock The core of a graph.
\newblock {\em Discrete Mathematics}, 109(1):117--126, 1992.

\bibitem{HOW}
J.~M. Howie.
\newblock {\em Fundamentals of semigroup theory}, volume~12 of {\em London Mathematical Society Monographs. New Series}.
\newblock  Oxford Science Publications. The Clarendon Press, Oxford University Press, NY, 1995.

\bibitem{IPlactic}
Z.~Izhakian.
\newblock Tropical plactic algebra, the cloaktic monoid, and semigroup representations.
\newblock {\em J. Algebra}, 524:290--366, 2019.

\bibitem{JON}
P.~R.~Jones, S.~W.~Margolis, J.~Meakin and J.~B.~Stephen.
\newblock Free products of inverse semigroups {II}.
\newblock {\em Glasgow Math. J.}, 33:373--387, 1991.

\bibitem{KAM}
M.~Kambites.
\newblock Free adequate semigroups.
\newblock {\em J. Aust. Math. Soc.}, 91(3):365--390, 2011.

\bibitem{KAM2}
M.~Kambites.
\newblock Retracts of trees and free left adequate semigroups.
\newblock {\em Proc. Edinburgh Math. Soc.}, 54:731--747, 2011.

\bibitem{KAM4}
M.~Kambites and A.~Kazda.
\newblock The word problem for free adequate semigroups.
\newblock \textit{Int. J. Alg. Comp.}, 24(6):893--907, 2014.

\bibitem{KAM3}
M.~Kambites, C-F.~Nyberg-Brodda, N.~Szak\'acs and R.~Webb.
\newblock The growth of free inverse monoids.
\newblock To appear in \textit{J. Math. Inst. Jussieu}, 2025.

\bibitem{LAW}
M.~V.~Lawson.
\newblock {\em Inverse semigroups: the theory of partial symmetries}.
\newblock World Scientific Publishing, Singapore, 1995.

\bibitem{MUN}
W.~D. Munn.
\newblock Free inverse semigroups.
\newblock {\em Proc. London Math. Soc. (3)}, 29:385--404, 1974.

\bibitem{SInverseBicyclic}
H.~E. Scheiblich.
\newblock A characterization of a free elementary inverse semigroup.
\newblock {\em Semigroup Forum}, 2:76--79, 1971.

\bibitem{SCH}
H.~E. Scheiblich.
\newblock Free inverse semigroups.
\newblock {\em Semigroup Forum}, 4:351--359, 1972.

\bibitem{STE}
J.~B. Stephen.
\newblock Presentations of inverse monoids.
\newblock {\em J. Pure Appl. Algebra}, 63(1):81--112, 1990.

\end{thebibliography}

\end{document}